\documentclass[11pt]{article}

\usepackage[margin=1.1in]{geometry}
\usepackage{times}
\usepackage[numbers]{natbib}
\usepackage{hyperref}
\usepackage{url}
\usepackage{amsmath, amssymb,amsfonts}
\usepackage{graphicx}
\usepackage{booktabs}

\usepackage{amsmath,amssymb,amsthm,mathtools}
\usepackage{enumitem}
\usepackage{graphicx}
\usepackage{booktabs}
\usepackage{array}
\usepackage{microtype}
\usepackage{xcolor}
\usepackage{url}
\newtheorem{theorem}{Theorem}[section]
\newtheorem{proposition}[theorem]{Proposition}
\newtheorem{lemma}[theorem]{Lemma}
\theoremstyle{definition}
\newtheorem{assumption}[theorem]{Assumption}
\newtheorem{remark}[theorem]{Remark}

\newcommand{\R}{\mathbb{R}}
\newcommand{\E}{\mathbb{E}}
\newcommand{\N}{\mathcal{N}}
\newcommand{\cF}{\mathcal{F}}
\newcommand{\cK}{\mathcal{K}}
\newcommand{\crit}{\operatorname{crit}}
\newcommand{\dist}{\operatorname{dist}}
\newcommand{\Id}{\operatorname{Id}}
\newcommand{\eps}{\epsilon}

\hypersetup{
    colorlinks=true,
    linkcolor=linkcolor,
    citecolor=linkcolor
}

\newcommand{\id}{\operatorname{Id}}

\newcommand{\MMSE}{\mathrm{MMSE}}

\definecolor{linkcolor}{RGB}{82, 82, 192}
\hypersetup{
    colorlinks=true,
    linkcolor=linkcolor,
    citecolor=linkcolor
}

\newcommand{\prox}{\operatorname{Prox}}

\title{Convergent Plug-and-Play Image Restoration with Annealed Noise Levels}

\author{
  Samuel Hurault \\
  LIGM, CNRS, Univ. Gustave Eiffel \\
   \texttt{samuel.hurault@univ-eiffel.fr}
}

\date{}

\begin{document}

\maketitle

\begin{abstract}
Plug-and-Play (PnP) methods solve imaging inverse problems by incorporating deep denoisers into iterative optimization algorithms. Although practical implementations often decrease the denoiser noise level $\sigma$ along iterations, most existing convergence analyses assume a fixed denoiser. In this work, we establish convergence guarantees for a broad family of Plug-and-Play algorithms with annealed noise level, spanning deterministic methods (RED--GD and PnP--PGD) and stochastic methods (SNORE, equivariant RED, and a variant of PnP--Flow). For each method, we identify an explicit, nonconvex objective associated with the terminal denoising level and prove asymptotic stationarity of the iterates with respect to this objective. Our analysis does not prescribe any decay rate for the noise schedule, and our assumptions cover both learned gradient-step denoisers and exact MMSE denoisers. Overall, our theoretical results bridge the gap between existing PnP convergence theory and the decreasing-denoising practices used by state-of-the-art image restoration methods. We empirically demonstrate the benefits of such schedules and illustrate the predicted convergence behavior on several imaging inverse problems, including inpainting, super-resolution, demosaicing and tomography. Code is available at \url{https://github.com/samuro95/annealed-pnp}
\end{abstract}.

\section{Introduction}

Imaging inverse problems aim to recover an unknown image from incomplete or corrupted measurements. These tasks are typically ill-posed, motivating the use of prior information through regularization. A standard variational approach solves
\begin{equation}
\label{eq:global_pb}
 \min_{x\in\R^d} f(x)+\lambda g(x),
\end{equation}
where $f$ measures consistency with the observations, $g$ encodes an image
prior, and $\lambda>0$ balances the two terms. In a Bayesian formulation,
choosing the negative log-likelihood $f = - \log p (y|x)$ and the negative log-prior $g = - \log p$ yields maximum
a posteriori (MAP) estimation. The central challenge is to design a prior that captures the complexity of natural images while retaining an optimization problem solvable with a converging algorithm.
Plug-and-Play (PnP) methods~\citep{venkatakrishnan2013} replace the proximal
operator of the regularizer in such optimization algorithms by a Gaussian
denoiser $D_\sigma$, trained at noise level $\sigma$. Regularization by
Denoising (RED)~\citep{romano2017little} instead uses the residual
$x-D_\sigma(x)$ in place of the regularization gradient. Both PnP and RED algorithms alternate between a data-consistency update and a denoising operation, allowing the same pretrained model to serve as a prior across different inverse problems; see~\citet{hurault2023convergent} for a review. More recently, the use of deep denoisers as learned priors has been further
popularized by diffusion models for sampling instead of optimization.

The parameter $\sigma$ of the plugged denoiser acts as a regularization parameter and changes the fixed point of the plug-and-play algorithm. In practice, it is well established that progressively decreasing the denoiser noise level $\sigma$ over the iterations can substantially improve restoration quality~\citep{zhang2017learning,wei2020tuning,zhang2021dpir}. A large $\sigma$ helps recover coarse structures early in the iterations, while a smaller $\sigma$ allows finer details to emerge later. By contrast, a fixed noise level imposes a trade-off, as illustrated in Figure~\ref{fig:intro}: a small $\sigma$ may fail to recover large structures, whereas a large $\sigma$ may oversmooth fine details. Decreasing $\sigma$ over the iterations combines these two regimes and enables a more accurate multiscale reconstruction. This continuation scheme can also ease optimization: at large $\sigma$, the associated objective is smoother and may be easier to optimize; decreasing $\sigma$ then gradually restores the finer structure of the objective at the terminal noise level~\citep{pesme2025map}. Noise annealing is also central to diffusion sampling models~\citep{song2021score, dhariwal2021diffusion}, whose denoisers are trained over a wide range of noise levels. Recent PnP methods leverage these large denoisers with decreasing noise schedules to achieve state-of-the-art restoration performance~\citep{mardani2024variational,martin2025pnp,park2026sgpnp}.

\begin{figure}[t]
\centering
\includegraphics[width=\linewidth]{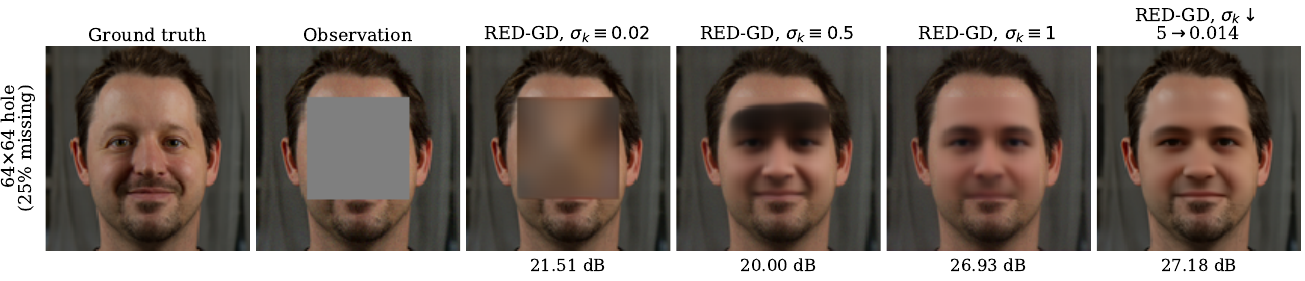}
\caption{Inpainting of a $64\times64$ hole in a $128\times128$ face image using RED gradient descent with the same denoiser (GS--DiffUNet; see Section~\ref{sec:experiments}). With a constant noise level, a small $\sigma$ leaves the initial flat patch, while larger values fill the hole at the cost of missing fine details. Decreasing geometrically $\sigma_k$ along iterates, from a large initial noise level to a small terminal one, produces a sharp face.
}
\label{fig:intro}
\end{figure}

Despite its practical success, most existing convergence analyses still assume a fixed denoiser throughout the iterations, even when decreasing noise schedules are used in experiments. These analyses typically provide fixed-point guarantees under suitable Lipschitz assumptions on the plugged denoiser~\citep{chan2017pnp,ryu2019,sun2019online,sun2021incremental,kamilov2023pnp} or optimization guarantees for structured denoisers~\citep{xu2020mmse,hurault2022gradient,hurault2022proximal,hurault2023relaxed}. Only a few existing convergence analyses handle varying the denoising level. Among them, analyses of annealed PnP--ADMM~\citep{chan2017pnp,hou2022truncated,shrestha2026taming} provide fixed-point guarantees, but under restrictive assumptions on the plugged denoiser and without identifying an underlying variational objective. In a variational setting, \citet{pesme2025map} anneal MMSE denoisers in an inner loop to recover MAP denoising steps, but their strategy does not address single-loop PnP algorithms. Finally, \citet{martin2025pnp} establish convergence of PnP--Flow, a stochastic variant of PnP--PGD, under summable stepsizes, without characterizing the limit.

In this work, we analyze the convergence of a broad class of deterministic and stochastic plug-and-play schemes in which the denoiser noise level monotonically decreases. We prove, for all algorithms, asymptotic stationarity for an explicit variational objective. The deterministic methods are RED gradient descent (RED--GD) and PnP proximal gradient descent (PnP--PGD). The stochastic ones are SNORE~\citep{renaud2024snore}, Equivariant RED~\citep{terris2024, renaud2025equivariant}, and a decoupled variant of SNOPnP~\citep{renaud2026equivariant} and PnP--Flow~\citep{martin2025pnp}.

To retain a variational interpretation, we consider denoisers that are exact gradient or proximal maps. This allows us to interpret PnP and RED methods as real optimization algorithms for minimizing explicit objectives of the form \eqref{eq:global_pb}.
The minimum mean square error (MMSE)
Gaussian denoiser provides a canonical example: by Tweedie's formula~\citep{efron2011tweedie} it is conservative and it is also a proximal operator~\citep{gribonval2011should}.
Although deep denoisers are trained to approximate the MMSE denoiser, they are not conservative in practice
\citep{reehorst2018clarifications}. Gradient-step denoisers re-impose the conservative property by parametrizing the denoiser as
$D_\sigma=\Id-\nabla g_\sigma$, where $g_\sigma$ is a learned scalar
potential~\citep{hurault2022gradient}. Under an additional contractivity condition on $\id - D_\sigma$, the denoiser also admits a proximal representation~\citep{hurault2022proximal}.

Building on this variational structure, we establish convergence guarantees for all annealed algorithms when the noise level $\sigma_k$ is nonincreasing and converges to a floor $\eps>0$. Under algorithm-specific stepsize assumptions, the iterates become asymptotically stationary for an explicit objective associated with the terminal noise level $\epsilon$. 
Our stationarity guarantees require neither convexity of the data term or denoiser potential nor a prescribed annealing rate: any monotone schedule $\sigma_k\downarrow\eps>0$ suffices. The convergence assumptions are verified for both exact MMSE denoisers and state-of-the-art learned conservative denoisers. Experiments on inpainting, demosaicing, super-resolution, and tomography, using denoisers finetuned as gradient-step or proximal denoisers, support the predicted convergence behavior and show consistent gains from annealing over constant-noise baselines.

\section{Various PnP and RED algorithms}
\label{sec:algos}

In this section, we review the different algorithms that will be theoretically analyzed in section~\ref{sec:convergence}. There are two families of Plug-and-Play methods, which we call RED~\citep{romano2017little} and PnP~\citep{venkatakrishnan2013}, that are both built by incorporating a Gaussian denoiser $D_\sigma$ into iterative optimization algorithms.  For each family, we study both deterministic and stochastic variants, when the noise level $\sigma$ of the denoiser decreases along the iterations.

\paragraph{RED algorithms~\citep{romano2017little}.} They are built by replacing the gradient of the prior term $\nabla g(x)$ with a denoising residual $x-D_\sigma(x)$ in gradient-based optimization schemes for solving~\eqref{eq:global_pb}. Applying this substitution to gradient descent, with iteration-dependent noise levels $\sigma_k$ yields:
        \begin{equation}
         \text{(RED--GD)}\qquad
         x_{k+1}=x_k-\gamma\Bigl[\nabla f(x_k)
           +\lambda\bigl(x_k-D_{\sigma_k}(x_k)\bigr)\Bigr].
         \label{eq:redgd}
        \end{equation}
        \citet{renaud2024snore} proposed SNORE, a stochastic version of this algorithm that adds Gaussian noise at each iteration, so that the denoiser is applied to images carrying the level of noise it was trained for. The authors study its convergence for constant $\sigma$ only. With a varying $\sigma_k$, the update is:
        \begin{equation}
     \text{(SNORE)}\qquad
     x_{k+1}=x_k-\gamma_k\Bigl[\nabla f(x_k)
       +\lambda\bigl(x_k-D_{\sigma_k}(x_k+\sigma_kZ_{k+1})\bigr)\Bigr] \qquad Z_{k+1} \overset{\mathrm{i.i.d.}}{\sim} \mathcal{N}(0, \id)
     \label{eq:snore}
    \end{equation}
These Gaussian perturbations of the iterates have then been extended to other random transformations, such as rotations, reflections, and permutations, in the Equivariant RED (ERED) algorithm~\cite{terris2024,renaud2025equivariant}. This aims to make reconstructions robust to transformations under which the image prior is expected to be invariant. 

We analyze in Section~\ref{sec:convergence} the convergence of these RED algorithms, under the assumption that the denoiser $D_\sigma$ is conservative i.e. its residual is the gradient of a smooth potential $g_\sigma : \R^d \to \R$:
\begin{equation}
 D_\sigma(z)=z-\nabla g_\sigma(z).
 \label{eq:gs-denoiser}
\end{equation}
The MMSE Gaussian denoiser $D^{\MMSE}_\sigma(z)=\E[X | X+\sigma Z=z]$, which is the optimal denoiser when training a neural network by minimizing the $L^2$ loss, satisfies this property with $g_\sigma = - \sigma^2 \log p_\sigma$  (Tweedie's formula~\citep{efron2011tweedie}). Another example is the Gradient-Step (GS) denoiser \citep{hurault2022gradient} which is trained explicitly in this form, with $g_\sigma$ parametrized by a neural network. In the following, we focus on these two classes of denoisers.
    
    \paragraph{PnP algorithms~\citep{venkatakrishnan2013}.} They are built by replacing the proximal operator of the regularization term $\prox_{ \tau g}(x)$ by the denoiser $D_\sigma$ in proximal optimization schemes for solving~\eqref{eq:global_pb}. For instance, starting from Proximal Gradient Descent, we get the PnP--PGD algorithm:
    \begin{equation}
     \text{(PnP--PGD)}\qquad
     x_{k+1}=D_{\sigma_k}\Big(x_k - \gamma \nabla f(x_k)\Big).
     \label{eq:pnppgd}
    \end{equation}
    Similarly to SNORE for GD, a stochastic version of this PGD algorithm has been proposed independently by \citet{renaud2026equivariant} (SNOPnP) and by \citet{martin2025pnp} (PnP--Flow, see Appendix~\ref{app:flow-change-variables}).
    
For the convergence of these PnP algorithms, we will suppose that the plugged denoiser $D_\sigma$ is a proximal map i.e. there is a potential $\phi_\sigma : \R^d \to \R \cup \{ + \infty \}$ such that:
\begin{equation}
 D_\sigma(z)= \prox_{\phi_\sigma}(z)  \in \mathop{\arg\min}_{x\in\R^d}
 \left\{\frac12\|x-z\|^2+\phi_\sigma(x)\right\}.
 \label{eq:prox-denoiser}
\end{equation}
The exact MMSE Gaussian denoiser is proximal for the following nonconvex potential (see Lemma~\ref{lem:mmse-conjugate-potential} for the proof) \citep{gribonval2011should}:
\begin{equation}
\label{eq:phi_sigma}
 \phi_\sigma(x)=\left(\frac12\|\cdot\|^2-g_\sigma\right)^*(x)-\frac12\|x\|^2.
\end{equation}
This is also the case of any conservative denoiser~\eqref{eq:gs-denoiser} with contractive $\nabla g_\sigma$~\citep{gribonval2020characterization}. In order to make a learned gradient-step denoiser~\eqref{eq:gs-denoiser} proximal, \citet{hurault2022proximal} proposed a regularization of the training loss to force the contractivity of $\nabla g_\sigma$.

Before turning to their convergence analysis, we clarify that each algorithm includes as parameters: the denoiser noise level $\sigma_k$, which decreases along the iterations, the stepsize $\gamma_k>0$, which may vary along the iterations, and a fixed regularization weight $\lambda$ that balances the regularization term against the data-fidelity term $f$. 

\section{Convergence analysis with annealed noise levels $\sigma_k$} 
\label{sec:convergence}

In this section, we establish convergence guarantees for the algorithms introduced above, when the denoising level $\sigma_k$ decreases along iterates.  Because regularity of the denoiser can deteriorate as
$\sigma\to0$, noise annealing does not decrease below a small positive floor $\epsilon$.  Throughout the analysis, we write
$I_\epsilon=[\epsilon,\sigma_0]$. 
For all methods, monotone convergence of $\sigma_k$ suffices for stationarity, we thus keep the following assumption.
\begin{assumption}[Noise-level schedule]
\label{ass:noise-schedule}
The denoiser levels satisfy $\sigma_{k+1}\leq\sigma_k$ and
$\sigma_k\to\epsilon>0$.
\end{assumption}
We assume that RED algorithms use the conservative denoiser~\eqref{eq:gs-denoiser}
and PnP algorithms its proximal version~\eqref{eq:prox-denoiser}.
Each iterate is then associated with an explicit moving objective consisting of the data-fidelity term $f$ and a
regularization term that depends on the current denoising level.
No convexity assumption is imposed on these objectives since the regularizers induced by gradient-step and MMSE denoisers are typically nonconvex.%
We take the following general assumption on the data-fidelity term.
\begin{assumption}[Data-fidelity term]
\label{ass:prox-lipschitz-x}
The function $f\in\mathcal C^1(\R^d)$ is bounded from below, subanalytic and has
globally $L_f$-Lipschitz gradient.
\end{assumption}
This assumption is verified for a large class of inverse problems, in particular when the observation noise is Gaussian $y \sim \mathcal{N}(Ax, \nu^2 \id) $, for which the data-fidelity term  is the $L^2$ norm $f = \frac{1}{2}\| Ax - y \|^2$. Subanalyticity is a mild sufficient condition for verifying the Kurdyka--\L{}ojasiewicz condition~\citep{bolte2007lojasiewicz}, which is a standard assumption for nonconvex convergence.

In the following subsections, we prove convergence of the different algorithms introduced in Section~\ref{sec:algos}.
For all algorithms, we prove (almost sure) asymptotic stationarity of the iterates relative to an explicit nonconvex objective. Moreover, for the deterministic algorithms, we also prove point convergence of the iterates when $|\sigma_k-\epsilon|=O(k^{-\alpha})$ for some $\alpha > 1$. Table~\ref{tab:conditions} summarizes the convergence guarantees established for the algorithms considered in this work.

\begin{table}[h]
\vspace{-0.5cm}
\centering
\caption{Convergence guarantees for the algorithms introduced in Section~\ref{sec:algos} with $\sigma_k \downarrow \epsilon$.}
\label{tab:theorem-summary}
\footnotesize
\setlength{\tabcolsep}{1.pt}
\renewcommand{\arraystretch}{1.05}
\begin{tabular}{@{}
  >{\raggedright\arraybackslash}m{.20\linewidth}
  >{\raggedright\arraybackslash}m{.22\linewidth}
  >{\raggedright\arraybackslash}m{.32\linewidth}
  >{\raggedright\arraybackslash}m{.22\linewidth}@{}}
\toprule
\textbf{Method} & \textbf{Objective} &
\textbf{Stepsize conditions} & \textbf{Guarantee} \\
\midrule

RED--GD (Section~\ref{sec:gd-pnp})
& $f+\lambda g_\eps$
& $\gamma(L_f+\lambda L_g)<2$
& Stationarity and point convergence \\

SNORE \newline  (Section~\ref{sec:snore})
& $f+\lambda \E_Z[g_\eps(\cdot +\eps Z)]$
& $\gamma_k(L_f+\lambda L_g)<2$;\newline
  $\sum_k\gamma_k=\infty$, $\sum_k\gamma_k^2<\infty$
& Stationarity a.s. \\[8pt]

ERED  \newline (Appendix~\ref{app:ered})
& $f+\lambda \E_Q[g_\eps(Q \cdot)]$
& $\gamma_k(L_f+\lambda L_g)<2$;\newline
  $\sum_k\gamma_k=\infty$, $\sum_k\gamma_k^2<\infty$
& Stationarity a.s. \\

\addlinespace[2pt]

PnP--PGD (Section~\ref{sec:prox-pgd})
& $f+\gamma^{-1}\phi_\eps$
& $\gamma L_f<1$
& Stationarity and point convergence \\

SNOPnP / PnP--Flow 
(Appendix~\ref{app:flow-change-variables})
& $f+\gamma^{-1}\phi_\eps$
& $\gamma L_f<1$;\newline
  $\sum_k\tau_k^2<\infty$
& Stationarity a.s. \\

\bottomrule
\end{tabular}
\label{tab:conditions}
\end{table}

\subsection{Annealed RED--GD}
\label{sec:gd-pnp}

We first study the convergence of the annealed RED--GD algorithm~\eqref{eq:redgd}. We consider a conservative denoiser $D_\sigma = \id - \nabla g_\sigma$. Each update is then a gradient-descent step for the moving objective $F_{\sigma_k}=f+\lambda g_{\sigma_k}$.  

We require the potential $g_\sigma$ to be, uniformly in $\sigma$, coercive, with Lipschitz gradient, and with controlled growth of its derivative w.r.t $\sigma$. More precisely:
\begin{assumption}[Gradient-Step Denoiser]
\label{ass:denoiser_gs}
The denoiser writes $D_\sigma = \id - \nabla g_\sigma$ where $(x,\sigma)\mapsto g_\sigma(x)$ is jointly $\mathcal C^2$ and subanalytic on $\R^d\times(0,\infty)$. We also assume that $\nabla g_\sigma$ is $L_g$-Lipschitz for all $\sigma\in I_\eps$ and that there are positive constants
$c_0, C_0, M_g$ such that, for all
$x\in\R^d, \sigma\in I_\eps$ 
\begin{equation}
 \begin{gathered}
 g_\sigma(x)\geq c_0\|x\|^2-C_0,\quad
 \text{and} \quad 
 |\partial_\sigma g_\sigma(x)|
 \leq M_g(1+\|x\|^2).
 \end{gathered}
 \label{eq:boundedness-global-bounds}
\end{equation}
\end{assumption}
Appendix~\ref{app:denoiser_gs}  verifies these conditions for the exact MMSE denoiser under a compactly supported prior. This assumption is natural for digital images, whose clean pixel values lie in a bounded range, typically $[0,1]$. For learned gradient-step denoisers, we show that the conditions hold up to a mild regularization that leaves the denoiser unchanged over the relevant pixel range. 

Then, under standard stepsize conditions, we show that monotone convergence of $\sigma_k$ to $\epsilon$ suffices for asymptotic stationarity of the iterates with respect to the terminal objective $F_\epsilon$. For summable polynomial decay, we also prove convergence of the sequence to a critical point of this objective. 

\begin{theorem}[Convergence of RED--GD~\eqref{eq:redgd}, proof in Appendix~\ref{app:proof-gd-pnp}]
\label{thm:gd-pnp}
Suppose Assumptions~\ref{ass:noise-schedule},~\ref{ass:prox-lipschitz-x},~\ref{ass:denoiser_gs}
hold and constant stepsize $\gamma<\frac{2}{L_f+\lambda L_g}$. Then
$\|\nabla F_\epsilon(x_k)\|\to0$,
$\|x_{k+1}-x_k\|\to0$, and every cluster point is critical for
$F_\epsilon$.  In particular, $\dist(x_k,\crit F_\eps)\to0$. Moreover,
\begin{equation}
 \min_{0\leq k\leq K}\|\nabla F_{\sigma_k}(x_k)\|^2
 =\mathcal O\!\left(\frac{1}{K}\right).
 \label{eq:gd-pnp-limits}
\end{equation}
If, in addition, $|\sigma_k-\epsilon|=\mathcal O(k^{-\alpha})$
for some $\alpha>1$,
then the iterates have finite length and converge to a critical point
of $F_\epsilon$.
\end{theorem}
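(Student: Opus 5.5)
The plan is to run a perturbed descent-lemma argument on the moving objective $F_{\sigma_k}=f+\lambda g_{\sigma_k}$. Since $\nabla F_{\sigma_k}$ is $L:=L_f+\lambda L_g$-Lipschitz and $x_{k+1}=x_k-\gamma\nabla F_{\sigma_k}(x_k)$, the descent lemma gives
\begin{equation*}
F_{\sigma_k}(x_{k+1})\le F_{\sigma_k}(x_k)-c\,\|\nabla F_{\sigma_k}(x_k)\|^2,\qquad c=\gamma\Bigl(1-\tfrac{\gamma L}{2}\Bigr)>0 .
\end{equation*}
To pass from $F_{\sigma_k}(x_{k+1})$ to $F_{\sigma_{k+1}}(x_{k+1})$, I will use the mean value theorem in $\sigma$ together with the growth bound on $\partial_\sigma g_\sigma$:
\begin{equation*}
F_{\sigma_{k+1}}(x_{k+1})-F_{\sigma_k}(x_{k+1})\le \lambda M_g(1+\|x_{k+1}\|^2)\,\delta_k,\qquad \delta_k:=\sigma_k-\sigma_{k+1}\ge0 .
\end{equation*}
The telescoping sum $\sum_k\delta_k=\sigma_0-\epsilon$ is finite. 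Coercivity, together with $f$ bounded below, gives $1+\|x\|^2\le a+b\,F_\sigma(x)$ uniformly over $\sigma\in I_\epsilon$, with $a,b$ depending on $c_0,C_0,\inf f,\lambda$. Setting $u_k=F_{\sigma_k}(x_k)-\inf f+\lambda C_0\ge0$, I then get
\begin{equation*}
u_{k+1}\le (1+\beta\delta_k)\,u_k+\beta'\delta_k-c\|\nabla F_{\sigma_k}(x_k)\|^2 .
\end{equation*}
I expect this to be the main obstacle: the perturbation is multiplicative in the objective, since it involves $\|x_{k+1}\|^2$, so the argument must be closed with Robbins--Siegmund-type reasoning. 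Deterministically, $\prod_k(1+\beta\delta_k)\le e^{\beta(\sigma_0-\epsilon)}<\infty$. This yields that $u_k$ is bounded and converges, that $\sum_k\|\nabla F_{\sigma_k}(x_k)\|^2<\infty$ with an explicit constant, and that the iterates stay bounded by coercivity.

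Summability gives the $\mathcal O(1/K)$ rate for $\min_{k\le K}\|\nabla F_{\sigma_k}(x_k)\|^2$. It also gives $\|x_{k+1}-x_k\|=\gamma\|\nabla F_{\sigma_k}(x_k)\|\to0$. To transfer to $F_\epsilon$, I will argue that the iterates lie in a compact set $K$ on which $(x,\sigma)\mapsto\nabla g_\sigma(x)$ is jointly continuous ($\mathcal C^2$), hence uniformly continuous on $K\times I_\epsilon$. Therefore $\sup_{x\in K}\|\nabla g_{\sigma_k}(x)-\nabla g_\epsilon(x)\|\to0$, and so $\|\nabla F_\epsilon(x_k)\|\to0$. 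Cluster points are critical by continuity, and $\dist(x_k,\crit F_\epsilon)\to0$ follows from boundedness via a standard contradiction argument with a subsequence.

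For finite length under $|\sigma_k-\epsilon|=\mathcal O(k^{-\alpha})$ with $\alpha>1$, I would use a KL argument on the joint function $H(x,\sigma)=f(x)+\lambda g_\sigma(x)$, which is subanalytic. A cleaner route is a perturbed Attouch--Bolte scheme for $F_\epsilon$. First, $|F_{\sigma_k}-F_\epsilon|\le \lambda M_g(1+R^2)(\sigma_k-\epsilon)=:e_k$ on $K$, and $\|\nabla F_{\sigma_k}-\nabla F_\epsilon\|\le C(\sigma_k-\epsilon)$ on $K$, using the $\mathcal C^2$ bound on the compact set. This gives the sufficient-decrease and relative-error inequalities for $F_\epsilon$ up to additive errors $\mathcal O(\sigma_k-\epsilon)$:
\begin{equation*}
F_\epsilon(x_{k+1})\le F_\epsilon(x_k)-c'\|x_{k+1}-x_k\|^2+2e_k,\qquad \|\nabla F_\epsilon(x_k)\|\le \gamma^{-1}\|x_{k+1}-x_k\|+C(\sigma_k-\epsilon).
\end{equation*}
Next, $F_\epsilon$ is constant on the compact connected limit set and satisfies a uniform KL inequality with desingularizer $\varphi(s)=s^{1-\theta}$ there. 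I then apply the perturbed concavity argument to $\tilde u_k=F_\epsilon(x_k)-F^*+\sum_{j\ge k}2e_j$, which is finite because $\sum e_j<\infty$ when $\alpha>1$. This gives $\sum\|x_{k+1}-x_k\|\le C+C\sum_k(\sigma_k-\epsilon)$, which is finite. Summability of $(\sigma_k-\epsilon)$ is exactly where $\alpha>1$ enters. Hence $(x_k)$ is Cauchy and converges to a critical point of $F_\epsilon$.
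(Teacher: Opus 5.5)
Your proof of boundedness, summability of $\|\nabla F_{\sigma_k}(x_k)\|^2$, the $\mathcal O(1/K)$ rate and the transfer to $F_\epsilon$ is sound and essentially the paper's. The only difference is that you merge boundedness and descent into one deterministic Robbins--Siegmund recursion, which is fine. The gap is in the finite-length part.

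You bound $|F_{\sigma_k}-F_\epsilon|\le e_k$ separately at $x_k$ and at $x_{k+1}$. This gives a sufficient-decrease inequality for $F_\epsilon$ whose additive error $2e_k=\mathcal O(\sigma_k-\epsilon)$ is first order in the perturbation, and that is too weak. Suppose $F_\epsilon$ is constant near a circle, and a sequence travels around that circle with $\|x_{k+1}-x_k\|\asymp k^{-\alpha/2}$. It satisfies both of your perturbed inequalities, the KL property, boundedness and $\|x_{k+1}-x_k\|\to0$, yet it has infinite length for every $\alpha\le 2$. So these ingredients alone cannot give finite length under $\alpha>1$.

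The step that fails is the concavity argument on $\tilde u_k$. KL for $F_\epsilon$ lower-bounds $\varphi'(F_\epsilon(x_k)-F^*)$, but you need a lower bound on $\varphi'(\tilde u_k)$. Since $\tilde u_k\ge F_\epsilon(x_k)-F^*$ and $\varphi'$ is nonincreasing, the inequality goes the wrong way. You could try to repair it via $\tilde u_k^{\mu}\le (F_\epsilon(x_k)-F^*)^{\mu}+r_k^{\mu}$, with $\varphi(s)\propto s^{1-\mu}$ and $\mu<1$. That turns the tail $r_k=\sum_{j\ge k}2e_j\asymp k^{1-\alpha}$ into a relative-error term $r_k^{\mu}$, which is summable only if $\mu(\alpha-1)>1$, i.e.\ only for $\alpha>2$. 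Your first idea, KL for the joint function $H(x,\sigma)$, does not work directly either: the component $\lambda\partial_\sigma g_{\sigma_k}(x_k)$ of $\nabla H$ need not vanish along the iterates.

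The missing idea is to keep the error second order. On the compact set, write $x_{k+1}=x_k-\gamma(\nabla F_\epsilon(x_k)+b_k)$ with $\|b_k\|\le C|\sigma_k-\epsilon|$, and apply the descent lemma to $F_\epsilon$ itself. The perturbation then enters only as $\|b_k\|\,\|x_{k+1}-x_k\|$. Young's inequality gives $F_\epsilon(x_{k+1})\le F_\epsilon(x_k)-\frac m2\|x_{k+1}-x_k\|^2+\frac1{2m}\|b_k\|^2$ for some $m>0$. In the flat example this forces $\|x_{k+1}-x_k\|\le\|b_k\|/m$, which is summable.

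The paper then absorbs the tail $\sum_{j\ge k}\|b_j\|^2=\mathcal O(k^{1-2\alpha})$ into the augmented subanalytic function $\Phi(x,t)=F_\epsilon(x)+t^\theta/\theta$. Here $t_k^\theta=\frac{\theta}{2m}\sum_{j\ge k}\|b_j\|^2$ and $\theta$ is an even integer with $\theta>(2\alpha-1)/(2\alpha-2)$, which makes the extra gradient component $t_k^{\theta-1}$ summable. It applies the inexact KL theorem of Ochs to $\Phi$, not to $F_\epsilon$, with:
sufficient decrease $\Phi(x_{k+1},t_{k+1})+\frac m2\|x_{k+1}-x_k\|^2\le\Phi(x_k,t_k)$;
relative error $(L+1/\gamma)\|x_{k+1}-x_k\|+\|b_k\|+t_{k+1}^{\theta-1}$.
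Summability of $\|b_k\|$ is where $\alpha>1$ is genuinely used.
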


Our proof extends to annealed denoisers the fixed-noise convergence analysis of
\citet{hurault2022gradient}. 
Under the stepsize condition, Assumption~\ref{ass:denoiser_gs} yields bounded iterates. On this bounded set, changing the noise level adds at most $|\sigma_{k+1}-\sigma_k|$ to the descent inequality. This error is summable because $\sigma_k\downarrow\eps$, giving asymptotic stationarity for $F_\eps$ without a rate condition. The inexact Kurdyka--\L{}ojasiewicz theorem of \citet[Theorem~10]{ochs2019inexact} gives full-sequence convergence.

\begin{remark}[Rate for the terminal objective]
The rate~\eqref{eq:gd-pnp-limits} concerns the gradient of the moving
objective $F_{\sigma_k}$. The proof gives the same rate for the terminal
gradient if $\sum_k |\sigma_k-\epsilon|^2<\infty$.
\end{remark}

\subsection{Annealed SNORE}
\label{sec:snore}

We now prove convergence of SNORE~\eqref{eq:snore}, the stochastic variant of RED--GD. %
As detailed in~\cite{renaud2024snore}, the algorithm with fixed $\sigma$ targets minimization of $F_{\sigma} = f + \lambda \tilde g_\sigma$ with regularizer
\begin{equation}
    \tilde g_\sigma(x)=\E_Z[g_\sigma(x+\sigma Z)],
    \qquad Z\sim\mathcal N(0,I_d).
    \label{eq:snore-averaged-potential}
\end{equation}
Using decreasing noise levels $\sigma_k\downarrow\eps$, we prove stationarity with respect to the limiting objective $F_\eps=f+\lambda\tilde g_\eps$. 
We obtain the same asymptotic stationarity conclusion as for
RED--GD in Theorem~\ref{thm:gd-pnp}, now almost surely. The main difference is that, as typical for stochastic optimization algorithms, convergence requires square summable vanishing stepsizes.

\begin{theorem}[Convergence of SNORE~\eqref{eq:snore}. Proof in Appendix~\ref{app:proof-sgpnp}]
\label{thm:snore}

Suppose Assumptions~\ref{ass:noise-schedule},~\ref{ass:prox-lipschitz-x},~\ref{ass:denoiser_gs} hold and that the stepsizes verify
\begin{equation}
 0<\gamma_k\leq\bar\gamma,\qquad (L_f + \lambda L_{g})\bar\gamma<2,\qquad
 \sum_k\gamma_k=\infty,\qquad
 \sum_k\gamma_k^2 <\infty.
 \label{eq:sg-descent-conditions}
\end{equation}
Then, almost surely,  
$\|\nabla F_\eps(x_k)\|\to0$, $\|x_{k+1}-x_k\|\to0$, and every cluster
point is critical for $F_\eps$; in particular,
$\dist(x_k,\crit F_\eps)\to0$, almost surely.  Moreover
\begin{equation}
 \min_{0\leq k\leq K}
 \E\|\nabla F_{\sigma_k}(x_k)\|^2
 =\mathcal O\!\left((\textstyle\sum_{j=0}^{K}\gamma_j)^{-1}\right).
 \label{eq:sg-current-rate}
\end{equation}
\end{theorem}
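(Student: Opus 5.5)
The approach is to read SNORE as stochastic gradient descent on the moving objective $F_{\sigma_k}=f+\lambda\tilde g_{\sigma_k}$. Then we combine a Robbins--Siegmund supermartingale argument with the summability of the annealing error $\sum_k|\sigma_{k+1}-\sigma_k|=\sigma_0-\eps<\infty$.

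Write $\cF_k=\sigma(x_0,Z_1,\dots,Z_k)$ and $G_k=\nabla f(x_k)+\lambda\nabla g_{\sigma_k}(x_k+\sigma_kZ_{k+1})$. Since $x_k-D_{\sigma_k}(x_k+\sigma_k Z_{k+1})=\nabla g_{\sigma_k}(x_k+\sigma_kZ_{k+1})-\sigma_kZ_{k+1}$ and $\E[Z_{k+1}]=0$, the direction is unbiased: $\E[G_k\mid\cF_k]=\nabla F_{\sigma_k}(x_k)$. Here differentiation under the expectation is justified by the $L_g$-Lipschitz gradient. The extra term $-\lambda\sigma_kZ_{k+1}$ is a centered noise, and so is $\lambda(\nabla g_{\sigma_k}(x_k+\sigma_kZ_{k+1})-\nabla\tilde g_{\sigma_k}(x_k))$. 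Their conditional variances are bounded by a constant $C$ independent of $x_k$ and $k$, using the Lipschitz bound together with $\sigma_k\le\sigma_0$.

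The first step is the descent inequality. $\nabla\tilde g_\sigma$ is $L_g$-Lipschitz, so $F_{\sigma_k}$ is $(L_f+\lambda L_g)$-smooth, and the descent lemma gives
\[
\E[F_{\sigma_k}(x_{k+1})\mid\cF_k]\le F_{\sigma_k}(x_k)-\gamma_k\Bigl(1-\tfrac{(L_f+\lambda L_g)\gamma_k}{2}\Bigr)\|\nabla F_{\sigma_k}(x_k)\|^2+C\gamma_k^2 .
\]
Next I pass from $F_{\sigma_k}$ to $F_{\sigma_{k+1}}$ at $x_{k+1}$. By Assumption~\ref{ass:denoiser_gs} and differentiating $\tilde g_\sigma(x)=\E[g_\sigma(x+\sigma Z)]$ in $\sigma$, we get $|\partial_\sigma\tilde g_\sigma(x)|\le M(1+\|x\|^2)$. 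Here we also use $\|\nabla g_\sigma(y)\|\le\|\nabla g_\sigma(0)\|+L_g\|y\|$ and need a uniform bound on $\nabla g_\sigma(0)$ over $I_\eps$, which follows by continuity on the compact set. Coercivity $g_\sigma\ge c_0\|x\|^2-C_0$ transfers to $\tilde g_\sigma$, so $1+\|x\|^2\le a+bF_\sigma(x)$ after shifting $f$ to be nonnegative. Hence
\[
F_{\sigma_{k+1}}(x_{k+1})\le(1+b'\delta_k)F_{\sigma_k}(x_{k+1})+a'\delta_k,\qquad \delta_k=\sigma_k-\sigma_{k+1}\ge0 .
\]
This is the main obstacle. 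Unlike the deterministic case, the iterates are not known a priori to be bounded, so the annealing error must be absorbed multiplicatively rather than additively. Combining the two displays yields the Robbins--Siegmund form
\[
\E[V_{k+1}\mid\cF_k]\le(1+\beta_k)V_k-c\gamma_k\|\nabla F_{\sigma_k}(x_k)\|^2+\eta_k
\]
for $V_k=F_{\sigma_k}(x_k)-\inf$. Here $c>0$ comes from $\bar\gamma(L_f+\lambda L_g)<2$, and both $\beta_k=O(\delta_k)$ and $\eta_k=O(\delta_k+\gamma_k^2)$ are summable. Robbins--Siegmund then gives three facts almost surely: $V_k$ converges, $\sup_k\|x_k\|<\infty$, and $\sum_k\gamma_k\|\nabla F_{\sigma_k}(x_k)\|^2<\infty$. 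Taking full expectations instead and telescoping gives the rate~\eqref{eq:sg-current-rate}.

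Finally, I upgrade $\liminf$ to $\lim$. Since $x_k$ is bounded, $\sup_{\|x\|\le R}\|\nabla F_{\sigma_k}(x)-\nabla F_\eps(x)\|\to0$ by joint $\mathcal C^2$ regularity and uniform continuity on compacts, so it suffices to treat $\nabla F_\eps$. Write $x_{k+1}-x_k=-\gamma_k\nabla F_{\sigma_k}(x_k)-\gamma_k\xi_{k+1}$. The martingale $\sum_k\gamma_k\xi_{k+1}$ converges almost surely because $\sum_k\gamma_k^2<\infty$, and $\gamma_k\to0$, so $\|x_{k+1}-x_k\|\to0$. The standard argument of Bertsekas--Tsitsiklis (or Mertikopoulos et al.) then applies. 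Suppose $\|\nabla F_\eps(x_k)\|$ exceeds $2\eta$ infinitely often. Since $\sum_k\gamma_k\|\nabla F_{\sigma_k}(x_k)\|^2<\infty$ forces it to fall below $\eta$ infinitely often, there are infinitely many upcrossings. On each upcrossing, the Lipschitz continuity of $\nabla F_\eps$ and the vanishing martingale tail force $\sum\gamma_j\gtrsim\eta/L$, which contributes at least of order $\eta^3/L$ to the summable series, a contradiction. Therefore $\|\nabla F_\eps(x_k)\|\to0$. By continuity of $\nabla F_\eps$, every cluster point is critical, and boundedness gives $\dist(x_k,\crit F_\eps)\to0$.
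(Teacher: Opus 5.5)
Your proposal is correct and follows essentially the paper's route: SNORE read as SGD on the moving objective $F_{\sigma_k}=f+\lambda\tilde g_{\sigma_k}$ with bounded-variance noise, then the annealing error absorbed multiplicatively via coercivity of $\tilde g_\sigma$ and quadratic growth of $\partial_\sigma\tilde g_\sigma$. From there you use Robbins--Siegmund for a.s.\ boundedness and $\sum_k\gamma_k\|\nabla F_{\sigma_k}(x_k)\|^2<\infty$, full expectations (with $\sup_k\E V_k<\infty$) for the rate, and a martingale-tail argument to upgrade $\liminf$ to $\lim$. The differences are cosmetic: the paper runs a fixed-length window argument on $\nabla F_{\sigma_k}(x_k)$ and transfers to $\nabla F_\eps$ through a Lipschitz-in-$\sigma$ bound obtained by Gaussian integration by parts, whereas you run the upcrossing version directly on $\nabla F_\eps$ using uniform convergence on compacts. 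For that uniform convergence, note that $\tilde g_\sigma$ averages over all of $\R^d$, so you need the linear growth of $\nabla g_\sigma$ plus dominated convergence, not just compactness.
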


The proof follows the RED--GD strategy in conditional expectation.
The Gaussian noise adds a variance term of order $\gamma_k^2$, which is handled by the summability
assumption. The Robbins--Siegmund theorem
\citep{robbins1971convergence} 
gives summability of  $\gamma_k\|\nabla F_{\sigma_k}(x_k)\|^2$. We then control the remaining stochastic fluctuations to obtain almost-sure stationarity for $F_\eps$.

\begin{remark}[Normalization by $\sigma^2$]
Following the original SNORE version~\citep{renaud2024snore}, for the exact MMSE denoiser, due to the form of Tweedie's formula, one may divide the denoiser residual in \eqref{eq:gs-denoiser} by $\sigma_k^2$. Theorem~\ref{thm:snore} then applies with
$F_\sigma = f+ \frac{\lambda}{\sigma^2 }\tilde g_\sigma$. Since $\sigma_k \to \epsilon>0$, the only difference is that the stepsize condition becomes $\left(L_f+\lambda L_{g}/\epsilon^2\right) \bar\gamma < 2 $.
\end{remark}
In Appendix~\ref{app:ered}, we also prove the same stationarity result for the stochastic ERED algorithm~\eqref{eq:ered}.

\subsection{Annealed PnP--PGD}
\label{sec:prox-pgd}

We now study the convergence of PnP algorithms, in which the proximal operator of the regularization term is replaced by a denoiser. Our analysis relies on denoisers that can be written as proximal mappings $D_\sigma=\prox_{\phi_\sigma}$ for a potential $\phi_\sigma$. As explained in Section~\ref{sec:algos}, this is verified for both the MMSE denoiser and a learned gradient-step denoiser $D_\sigma=\id-\nabla g_\sigma$ when $\nabla g_\sigma$ is contractive.

\begin{assumption}[Proximal denoiser]
\label{ass:regular-proximal-denoiser}
Let $D_\sigma = \id - \nabla g_\sigma$. Assume either that there exists $\rho<1$ such that, for every
$\sigma\in I_\epsilon$, $\nabla g_\sigma$ is $\rho$-Lipschitz,
or that $D_\sigma$ is the exact MMSE denoiser associated with a
compactly supported prior whose support is not contained in an affine
hyperplane.
\end{assumption}

As detailed in Appendix~\ref{app:regular-proximal-denoiser}, in the MMSE case, compact support ensures that $\operatorname{Im}(D_\sigma) \subseteq \operatorname{conv}(\operatorname{supp}P_X)$
uniformly in $\sigma$. This ensures that every PnP iterate remains bounded. %

With such proximal denoiser, annealed PnP--PGD~\eqref{eq:pnppgd} is proximal gradient descent for the nonconvex moving objective $F_{\sigma_k} :=  f+ \gamma^{-1}\phi_{\sigma_k}$.
The stepsize appears in the objective because the proximal denoiser applies $\operatorname{prox}_{\phi_{\sigma_k}}$ instead of
$\operatorname{prox}_{\gamma \phi_{\sigma_k}}$ as for standard PGD.  The stepsize thus changes the effective regularization weight. %
Similar to RED--GD, we prove convergence of the algorithm and asymptotic stationarity with respect to the terminal objective $F_{\eps} := f+ \gamma^{-1}\phi_{\eps}$.

\begin{theorem}[Convergence of PnP--PGD~\eqref{eq:pnppgd}, proof in Appendix~\ref{app:proof-prox-pgd}]
\label{thm:prox-pgd}
Suppose Assumptions~\ref{ass:noise-schedule},~\ref{ass:prox-lipschitz-x},~\ref{ass:denoiser_gs},~\ref{ass:regular-proximal-denoiser}
hold and constant stepsize $\gamma < \frac{1}{L_f}$. Then, 
$\|\nabla F_\eps(x_k)\|\to0$,
$\|x_{k+1}-x_k\|\to0$, and every cluster point is critical for
$F_\eps$.  Moreover,
\begin{equation}
\min_{1\leq k\leq K}\|\nabla F_{\sigma_k}(x_k)\|^2
 =\mathcal O\!\left(\frac{1}{K}\right).
 \label{eq:prox-pgd-gradient-limits}
\end{equation}
If, in addition, $|\sigma_k-\eps|=\mathcal O(k^{-\alpha})$
for some $\alpha>1$,
then the iterates have finite length and converge to a critical point
of $F_\eps$.
\end{theorem}

Note that, because the stepsize enters the objective $F_\sigma = f + \gamma^{-1} \phi_\sigma$, the condition $\gamma L_f<1$ imposes a lower bound on the effective regularization weight \citep{hurault2023relaxed}. This can be restrictive for PnP--PGD with fixed $\sigma$: in our face-hole experiment (Section~\ref{sec:experiments}), $\sigma$ must be large enough to fill the missing region, but this leads to oversmoothing unless the regularization weight is reduced, i.e., unless $\gamma$ is increased. Annealing precisely mitigates this tradeoff: a large $\sigma$ helps fill the hole early, while a smaller terminal $\sigma$ limits overregularization near convergence.

\paragraph{Annealed PnP--Flow / SNOPnP.}

We also study in Appendix~\ref{app:flow-change-variables} the convergence of a stochastic version of PnP--PGD with decreasing noise-levels. The algorithm appeared in both PnP--Flow~\citep{martin2025pnp} and SNOPnP~\citep{renaud2026equivariant}, and we prove that they are equivalent. In these algorithms, the Gaussian perturbation and the denoiser use the same level $\sigma_k$. When $\sigma_k\to\eps>0$, the injected noise retains nonzero variance, so our descent argument cannot control its accumulated effect. We therefore let the noise amplitude $\tau_k$ vanish independently of the denoiser level $\sigma_k$
\begin{equation}
 x_{k+1}=D_{\sigma_k}\Bigl(
   x_k-\gamma\nabla f(x_k)+\tau_kZ_{k+1}\Bigr),
 \qquad Z_{k+1}\sim\N(0,I_d).
 \label{eq:snopnp}
\end{equation}
Theorem~\ref{thm:PnP--Flow} in Appendix~\ref{app:flow-change-variables} then proves, for $\sum_k\tau_k^2<\infty$, almost-sure asymptotic stationarity of these iterates with respect to $F_\epsilon$.

\section{Experiments}
\label{sec:experiments}

In the experiments, we illustrate, on various image inverse problems, the convergence properties established in the previous section. We also confirm that decreasing the denoiser noise along the iterations improves restoration quality over running the same algorithm at constant $\sigma$. All experiments are implemented with the DeepInverse library~\citep{tachella2025deepinverse}.

\paragraph{Inverse problems and data.}
We experiment on both natural images ($68$ images of CBSD68~\citep{martin2001database} downscaled to $128\times128$) and face images  ($100$ images from FFHQ~\citep{karras2019style} at $128\times128$). We consider different linear inverse problems with observations
$y=Ax+w$, where $w\sim\mathcal N(0,\sigma_n^2 I)$ ($\sigma_n = 0.01$ across all experiments) (i) inpainting: a centered $64\times64$ hole on faces and a random mask hiding $50\%$ of the pixels on natural images (ii) demosaicing of a Bayer color filter array (iii) $\times4$ super-resolution with a bicubic anti-aliasing filter. We also provide complementary experiments on sparse-view parallel-beam tomography in Appendix~\ref{app:experiments}. The algorithms are initialized from the observed pixels completed by a normalized convolution for inpainting and demosaicing, and bicubic upsampling for super-resolution.

\paragraph{Algorithms.}
We compare RED--GD~\eqref{eq:redgd}, SNORE~\eqref{eq:snore}, Equivariant RED (ERED)~\eqref{eq:ered}, PnP--PGD~\eqref{eq:pnppgd} and SNOPnP~\eqref{eq:snopnp}. For ERED, the random transformations are drawn from the dihedral group (rotations by multiples of $90^\circ$ and flips) on natural images, and from $\{\id,\text{horizontal flip}\}$ on face images. As an external reference we also report DPIR~\citep{zhang2021dpir}, i.e.\ eight half-quadratic splitting iterations with its default decreasing noise schedule.

\paragraph{Denoisers.}
All experiments are run with gradient-step (GS) denoisers $D_\sigma=\id-\nabla g_\sigma$ with, following~\citet{hurault2022gradient} $g_\sigma(x)=\frac12\|x-N_\sigma(x)\|^2$ where $N_\sigma$ is a UNet neural network. The denoiser $D_\sigma$ is trained using the standard $L^2$ denoising loss.  
For experiments on FFHQ faces, we parameterize $N_\sigma$ using the pretrained DDPM U-Net architecture from~\citet{dhariwal2021diffusion} (referred to as \emph{DiffUNet} in the deepinv library) which we finetune with our GS denoiser loss on $128\times128$ images over noise levels $\sigma\in[0.002,5]$. We call it GS--DiffUNet. 
For natural image experiments, we parameterize $N_\sigma$ using the DRUNet architecture~\citep{zhang2021dpir}, the resulting gradient-step denoiser is called \emph{GS--DRUNet}.  Since the original model of~\citet{hurault2022gradient} was trained only for noise levels $\sigma\leq 0.2$, we finetune the denoiser on the extended range $\sigma\in[0.002,2]$. This wider noise-level range is particularly useful for annealed schemes.  

Moreover, we recorded every input passed to a GS denoiser in the reported experiments and verified that all such inputs remain in a fixed compact set. Consequently, the regularization introduced in Appendix~\ref{app:denoiser_gs} is never activated, and Assumption~\ref{ass:denoiser_gs} is satisfied along the reported trajectories.

For the PnP experiments, in order to obtain proximal denoisers, following~\citet{hurault2022proximal}, we finetune (and relax) these GS denoisers with a soft penalty that encourages $\| \nabla^2 g_\sigma (x)\|_S < 1$. The resulting denoiser is then, at least locally, the proximal map of a potential $\phi_\sigma$. We use Lanczos iterations for estimating the spectral norm during training, which is considerably faster to converge than the power iterations initially used by~\citet{hurault2022proximal}. Unlike conservativity, which holds by construction, the contraction condition sufficient for proximality is promoted by a soft penalty, which is not guaranteed for every input, but is expected to hold near the image manifold. See Appendix~\ref{app:denoiser_training} for details.

\paragraph{Schedules and hyper-parameters.}
In all experiments, the noise level follows the geometric schedule 
\begin{equation}
\label{eq:schedule}
\sigma_k=\eps+(\sigma_0-\eps)r^k \quad \text{with} \quad 0<r<1
\end{equation}
which decreases at every iteration and converges to $\eps$ faster than the $\mathcal O(k^{-\alpha})$, $\alpha>1$, required for full sequence convergence of Theorems~\ref{thm:gd-pnp} and~\ref{thm:prox-pgd}.

We use stepsizes which follow each algorithm's requirements for convergence. For RED--GD, SNORE and ERED, the stepsize condition involves the Lipschitz constant of $\nabla g_\sigma$ which is first estimated empirically on a validation set. The stochastic algorithms SNORE and ERED require vanishing stepsizes ($\sum \gamma_k = \infty$ and $\sum \gamma_k ^2 < \infty$), we keep the step constant during the main noise annealing phase and then let it decay polynomially. We tune these method-specific parameters on validation images. Appendix~\ref{app:experiments_algos} gives the exact schedules, and parameter values. %

\paragraph{Results.}
Table~\ref{tab:main} reports final PSNR on FFHQ, while Table~\ref{tab:natural} in the Appendix reports results on natural images (CBSD68). Both tables compare constant and annealed noise levels versions. 

\begin{table}[h]
\vspace{-0.3cm}
\centering\small
\setlength{\tabcolsep}{4.5pt}
\caption{PSNR (dB) on $100$ FFHQ face images after $500$ iterations with constant or annealed~\eqref{eq:schedule} noise level. RED methods and DPIR use the GS--DiffUNet denoiser; PnP methods use Prox--DiffUNet. Best value per problem in bold. The corresponding CBSD68 results are in Table~\ref{tab:natural} of Appendix~\ref{app:experiments}.}
\label{tab:main}

\begin{tabular}{@{}lrrrrrr@{}}
\toprule
 & \multicolumn{2}{c}{\textbf{Inpainting}} & \multicolumn{2}{c}{\textbf{Demosaicing}} & \multicolumn{2}{c}{\textbf{SR $\times 4$}}\\
\cmidrule(lr){2-3}\cmidrule(lr){4-5}\cmidrule(l){6-7}
\textbf{Method} & \multicolumn{1}{c}{Constant} & \multicolumn{1}{c}{Annealed} & \multicolumn{1}{c}{Constant} & \multicolumn{1}{c}{Annealed} & \multicolumn{1}{c}{Constant} & \multicolumn{1}{c@{}}{Annealed}\\
\midrule
RED--GD & 24.22 & 25.36 & 33.54 & 37.89 & 27.17 & 27.51\\
PnP--PGD & 20.03 & 25.09 & 35.35 & 37.76 & 26.49 & 27.25\\
SNORE & 24.34 & 24.98 & 33.32 & \textbf{38.03} & 27.14 & 27.34\\
SNOPnP & 21.93 & 24.42 & 36.22 & 37.78 & 27.18 & 27.29\\
ERED & 24.08 & \textbf{25.44} & 33.55 & 38.00 & 27.24 & \textbf{27.56}\\
\addlinespace[2pt]
DPIR  & \multicolumn{2}{c}{19.83} & \multicolumn{2}{c}{35.90} & \multicolumn{2}{c}{26.26}\\
\bottomrule
\end{tabular}

\end{table}

Figures~\ref{fig:hole}, \ref{fig:demosaic}, and \ref{fig:sr} show representative reconstructions. Annealing improves PSNR in every case, with the largest gains in inpainting and demosaicing. As Figure~\ref{fig:hole} shows, a low constant $\sigma$ can leave large holes unfilled (PnP--PGD), while a high constant $\sigma$ can produce blurry results (RED--GD). Annealing $\sigma$ fills the holes with more detail. In demosaicing, it removes the color artifacts that remain with a constant $\sigma$. Sparse-view tomography also benefits substantially from noise-level annealing (Table~\ref{tab:natural}). It provides smaller gains for inpainting with randomly missing pixels (Table~\ref{tab:natural}) and $\times 4$ super-resolution (Table~\ref{tab:main}). In these settings, no large missing region needs to be reconstructed, so a small, fixed $\sigma$ can work well throughout the iterations.


Table~\ref{tab:main} also shows that annealing is particularly useful for PnP algorithms (PnP--PGD and SNOPnP), in particular  for large hole inpainting. As explained in Section~\ref{sec:prox-pgd}, for these algorithms, the stepsize $\gamma$ also determines the effective regularization weight. Using a large constant $\sigma$ would require increasing $\gamma$ to avoid over-regularization, but the convergence condition $\gamma L_f<1$ limits this adjustment. RED methods do not face this constraint, as their regularization weight can be tuned independently. Noise annealing alleviates the limitation for PnP--PGD by using a large $\sigma$ early to fill the hole and a small $\sigma$ near convergence, thereby avoiding over-regularization in the final reconstruction.

\begin{figure}[h]
\centering
\includegraphics[width=\linewidth]{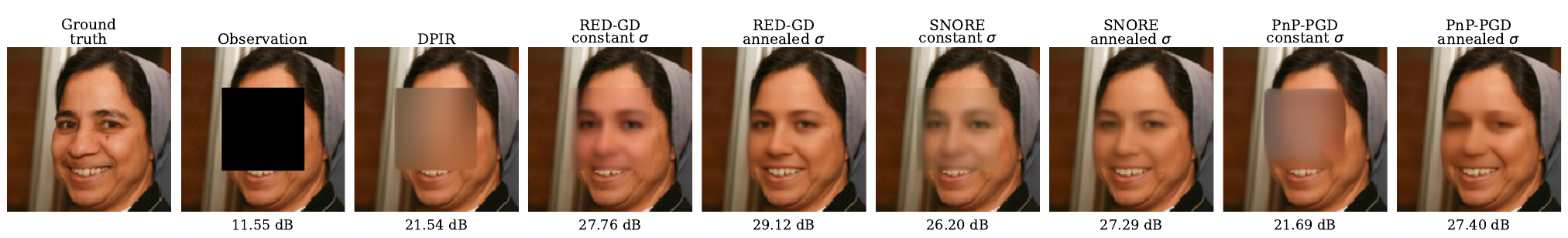}\\[2pt]
\includegraphics[width=\linewidth]{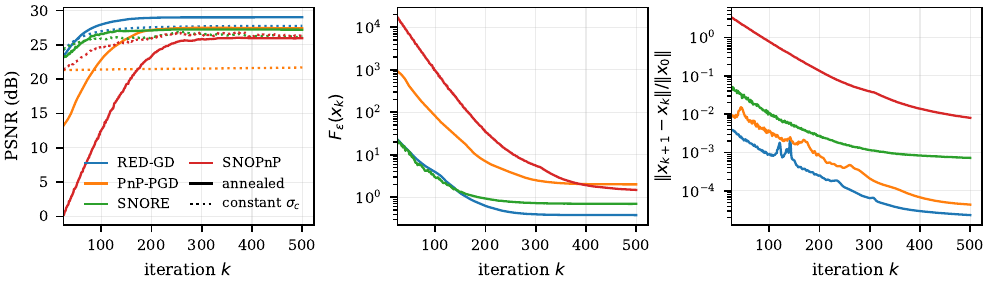}
\caption{Inpainting of a $64\times64$ hole on FFHQ $128\times128$. Top: the constant-$\sigma$ versions (and DPIR) leave a smooth patch, while the annealed versions fill the hole. Bottom: PSNR, target objective $F_\eps(x_k)$ and residual $\|x_{k+1}-x_k\|/\|x_0\|$ of the annealed runs, along the iterations, for the image shown above. Note that every algorithm has its own target objective $F_\eps$, so only the decrease of each curve is meaningful, not the levels.}
\label{fig:hole}
\end{figure}

\begin{figure}[t]
\centering
\includegraphics[width=\linewidth]{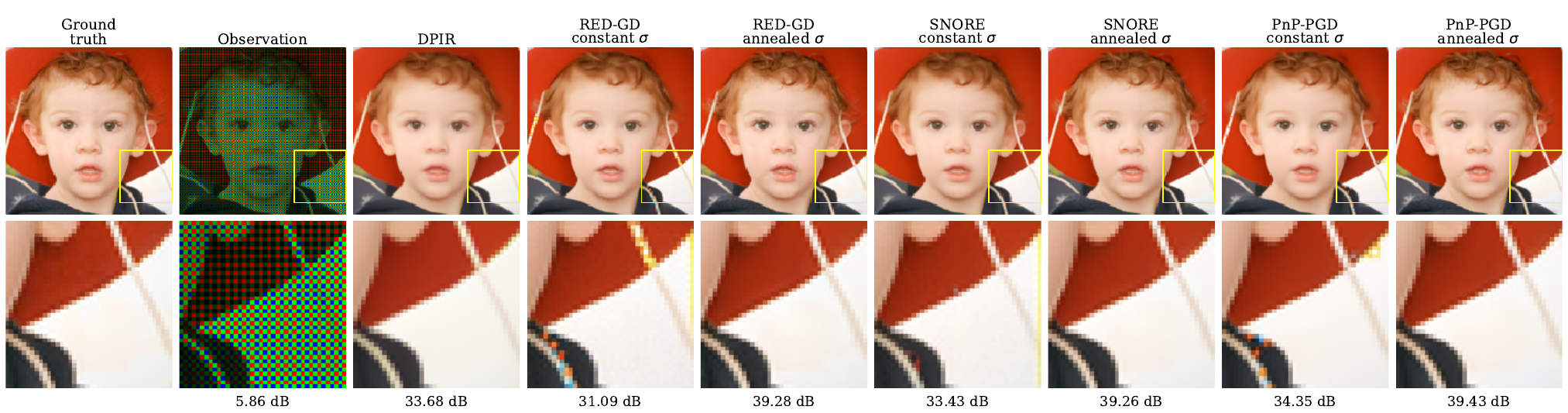}\\[2pt]
\includegraphics[width=\linewidth]{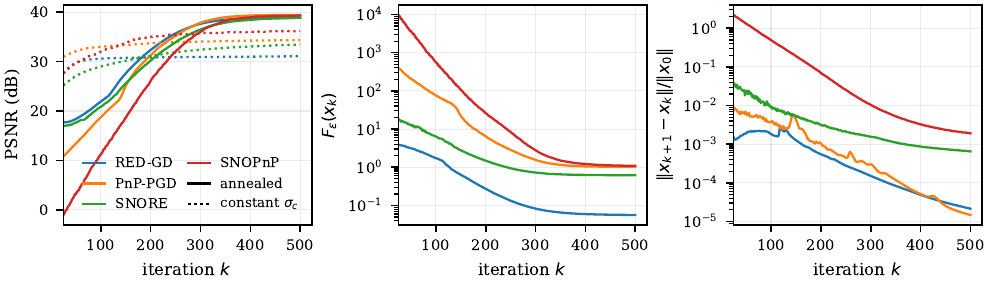}
\caption{Demosaicing of a Bayer color filter array on FFHQ $128\times128$. The constant-$\sigma$ versions (and DPIR) keep the color fringes of the initial interpolation along the edges, the annealed versions remove them. Bottom: PSNR, terminal objective and residual along the iterations.}
\label{fig:demosaic}
\end{figure}

\clearpage
\paragraph{Convergence.}
The bottom rows of Figures~\ref{fig:hole}, \ref{fig:demosaic}, and~\ref{fig:sr} track the PSNR, the terminal objective $F_\eps(x_k)$ associated with each algorithm, and the normalized residual $\|x_{k+1}-x_k\|/\|x_0\|$ for the image shown above. Details on the computation of $F_\eps(x_k)$ are given in Appendix~\ref{app:experiments_algos}. Across the three tasks, as predicted by our theoretical results, the objective values stabilize and the residuals decay toward zero. Note that although annealing can initially yield lower PSNR than a constant-noise run, its PSNR later overtakes the baseline as the noise level decreases.

\begin{figure}[t]
\centering
\includegraphics[width=\linewidth]{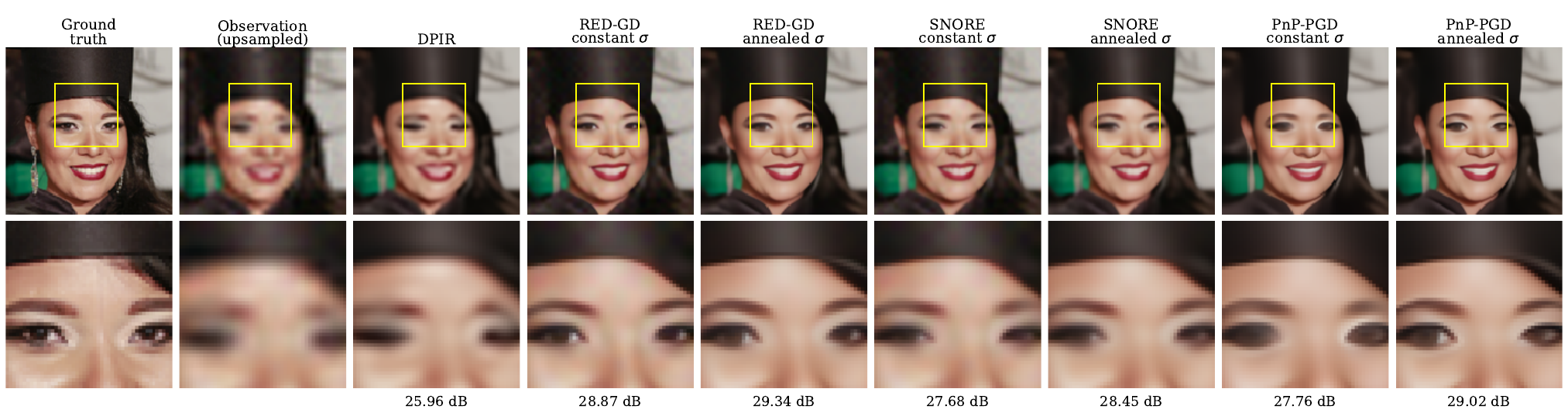}\\[2pt]
\includegraphics[width=\linewidth]{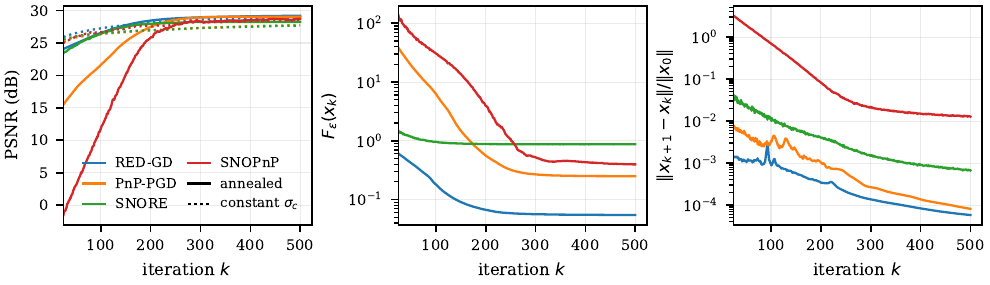}
\caption{$\times4$ super-resolution on FFHQ $128\times128$. PSNR differences are smaller than in the previous experiments, but the annealed methods yield visibly sharper and more detailed reconstructions. Bottom: PSNR, terminal objective and residual along the iterations.}
\label{fig:sr}
\end{figure}
\section{Conclusion}

We have shown theoretically that plug-and-play (RED and PnP) algorithms with decreasing denoising strength converge toward stationary points of an explicit terminal objective. Our analysis does not prescribe an annealing rate or require convexity assumptions. This closes the gap between previous fixed-denoiser PnP convergence theories and the decreasing noise schedules used in modern PnP restoration. We confirm experimentally on various inverse problems that noise-annealed PnP converges and gives better reconstruction than its constant noise level counterpart. 

Our results show that annealing preserves convergence to stationarity, but do not explain why it may yield better solutions: annealed and constant-level algorithms satisfy the same stationarity guarantee, which does not exclude poor critical points. Understanding theoretically how annealing influences stationary-point selection remains an open question. A second limitation is that our analysis requires conservative denoisers and/or contractive residuals. These properties hold for the ideal MMSE denoiser and are enforced or encouraged through regularization for our trained denoisers. However, standard learned denoisers, even if trained to approximate the MMSE denoiser, may satisfy these assumptions only approximately due to finite model capacity and imperfect training. Quantifying how such approximation errors affect the convergence guarantees is an important direction for future work.

\bibliographystyle{plainnat}
\bibliography{references}

\clearpage
\appendix

\section{Verification of assumptions}
\label{app:denoiser-verification}

\subsection{Verification of Assumption~\ref{ass:denoiser_gs}}
\label{app:denoiser_gs}

We check the three global bounds in
Assumption~\ref{ass:denoiser_gs}, as well as joint regularity and
subanalyticity. %

\paragraph{Exact MMSE denoiser with compact prior.}
Assume that \(\|X\|\leq R\) almost surely, and let
\(p_\sigma\) be the density of \(Y=X+\sigma Z\), where
\(Z\sim\mathcal N(0,I_d)\). Set
\(g_\sigma(x)=-\sigma^2\log p_\sigma(x)\).
Gaussian convolution makes \(p_\sigma>0\)
jointly real analytic in \((x,\sigma)\) for \(\sigma>0\).
Thus \(g_\sigma\) is jointly real analytic, hence jointly
\(\mathcal C^\infty\) and subanalytic. Tweedie's formula and
differentiation of the posterior mean give
\[
 \nabla g_\sigma(x)=x-D^{\MMSE}_\sigma(x),\qquad
 \nabla^2g_\sigma(x)
 =I-\sigma^{-2}\operatorname{Cov}(X\mid Y=x).
\]
Since \(\|\operatorname{Cov}(X\mid Y=x)\|_{\rm op}\leq R^2\),
the gradients $\nabla g_\sigma$ are globally \(L_g\)-Lipschitz for all
\(\sigma\in I_\eps\), with
\(L_g=1+R^2/\eps^2\).

To verify coercivity and growth in \(\sigma\), write
\begin{equation}
\label{eq:gsigma}
 g_\sigma(x)=\frac d2\sigma^2\log(2\pi\sigma^2)
              -\sigma^2\log W_\sigma(x).
\end{equation}
with $W_\sigma(x):=\E\exp\!\left(-\frac{\|x-X\|^2}{2\sigma^2}\right)$.
Using
$$
\|x-X\|^2 \ge \tfrac12\|x\|^2-\|X\|^2 \ge\tfrac12\|x\|^2-R^2, 
$$
we have
\[
\log W_\sigma(x)\le-\frac{\|x\|^2}{4\sigma^2}+\frac{R^2}{2\sigma^2}
\]
Thus, for all $\sigma \in I_\epsilon$,
\[
g_\sigma(x)\ge\tfrac14\|x\|^2-\tfrac12R^2 +\tfrac d2\sigma^2\log(2\pi\sigma^2).
\]
Finally, differentiating~\eqref{eq:gsigma} with respect to $\sigma$ gives
\[
 \partial_\sigma g_\sigma(x)
 =d\sigma\bigl(\log(2\pi\sigma^2)+1\bigr)
  -2\sigma\log W_\sigma(x)
  -\frac1\sigma\E[\|x-X\|^2\mid Y=x].
\]
Both \(|\log W_\sigma(x)|\) and the conditional squared distance
are bounded by a constant times \(1+\|x\|^2\), uniformly for
\(\sigma\in I_\eps\). Hence
\(|\partial_\sigma g_\sigma(x)|
 \leq M_g(1+\|x\|^2)\), as required.

\paragraph{Gradient-step denoisers.}

Suppose $g_\sigma:\R^d\to\R$ is parametrized by a
feedforward network with affine layers and $\mathcal C^2$
activations whose first and second derivatives are globally bounded.
Assume these bounds are uniform for $\sigma\in I_\eps$. Then the $\nabla g_\sigma$ are uniformly globally Lipschitz. However, ~\cite{hurault2022gradient} consider learned potential parametrized as
\[
g_\sigma(x)=\frac12\|x-N_\sigma(x)\|^2,
\]
where $(x,\sigma)\mapsto N_\sigma(x)$ is a neural network, with activation chosen such that it is jointly $\mathcal C^2$ in $(x, \sigma)$
and subanalytic. We propose here a mild regularization of this potential such that it is uniformly Lipschitz in $x$ and uniformly coercive. 

Fix a compact box $\mathcal B$ containing the
prescribed image range, typically $\mathcal B = [-1,2]^d$ for image pixels normalized in $[0,1]$.  Choose a $\mathcal C^2$ semialgebraic cutoff
$\chi:\R^d\to[0,1]$ that equals $1$ on a neighbourhood of
$\mathcal B$ and vanishes outside a ball of radius $R$. For any
$\mu>0$, define
\begin{equation}
\widetilde g_\sigma(x)
=\chi(x)^2g_\sigma(x)
 +\frac{\mu}{2}\bigl(1-\chi(x)^2\bigr)\|x\|^2,
\qquad
\widetilde D_\sigma=\Id-\nabla\widetilde g_\sigma .
\label{eq:regularized-gs-tail}
\end{equation}
On $\mathcal B$, both the potential and the denoiser coincide with
their original counterparts. Outside the ball of radius $R$,
$\widetilde g_\sigma(x)=\frac{\mu}{2}\|x\|^2$.

This single modification gives the required global bounds. Since
$g_\sigma\geq0$, for every $x$ and $\sigma\in I_\eps$,
\[
\widetilde g_\sigma(x)
\geq \frac{\mu}{2}\|x\|^2-\frac{\mu R^2}{2}.
\]
Moreover, the $x$-Hessian of $\widetilde g_\sigma$ is uniformly bounded
on the compact support of $\chi$, by joint $\mathcal C^2$ regularity
and compactness of $I_\eps$; outside that support it equals $\mu I$.
Hence $\nabla\widetilde g_\sigma$ is globally Lipschitz, with a
constant uniform in $\sigma\in I_\eps$. Finally,
\[
\partial_\sigma\widetilde g_\sigma(x)
=\chi(x)^2\partial_\sigma g_\sigma(x)
\]
is uniformly bounded on $\R^d\times I_\eps$, and therefore satisfies
the required bound $|\partial_\sigma\widetilde g_\sigma(x)|
\leq M_g(1+\|x\|^2)$. Joint $\mathcal C^2$ regularity and
subanalyticity are preserved. Thus $\widetilde g_\sigma$ verifies
Assumption~\ref{ass:denoiser_gs}. 

This penalty provides a coercive safeguard against unbounded iterates.
It remains inactive whenever the inputs to the denoiser stay inside
$\mathcal B$, in which case the regularized and original iterations
coincide.

\subsection{Regularity properties of the proximal denoisers}
\label{app:regular-proximal-denoiser}

\paragraph{MMSE denoiser.} We first prove in Lemma~\ref{lem:mmse-conjugate-potential} the form \eqref{eq:phi_sigma} of the MMSE proximal potential. Then, in Proposition~\ref{prop:various_mmse_results}, we prove its regularity properties that will be useful in the PnP proofs.

\begin{lemma}[MMSE proximal potential]
\label{lem:mmse-conjugate-potential}
Assume $\E\|X\|^2<\infty$, let $Z\sim\mathcal N(0,I_d)$ be independent
of $X$, and fix $\sigma>0$. Let $p_\sigma$ be the density of
$X+\sigma Z$, define $g_\sigma=-\sigma^2\log p_\sigma$, and write
$D^{\MMSE}_\sigma(z)=\E[X\mid X+\sigma Z=z]$. Then the potential
\begin{equation*}
 \phi_\sigma(x)
 =\left(\frac12\|\cdot\|^2-g_\sigma\right)^*(x)
   -\frac12\|x\|^2
\end{equation*}
is proper and lower semicontinuous, and
$D^{\MMSE}_\sigma(z)=\operatorname{prox}_{\phi_\sigma}(z)$ for every
$z\in\R^d$.
\end{lemma}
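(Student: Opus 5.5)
Set $h_\sigma:=\frac12\|\cdot\|^2-g_\sigma = \frac12\|\cdot\|^2+\sigma^2\log p_\sigma$. The plan is to show that $h_\sigma$ is a convex, finite, differentiable function with $\nabla h_\sigma=D^{\MMSE}_\sigma$, and then to read off the proximal representation from conjugate duality, in the spirit of \citet{gribonval2011should}.

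\textbf{Step 1 (convexity and gradient of $h_\sigma$).} Write $p_\sigma(z)=(2\pi\sigma^2)^{-d/2}\E\exp(-\|z-X\|^2/(2\sigma^2))$. Expanding the square gives
\[
h_\sigma(z)=\sigma^2\log\E\exp\!\bigl(\langle z,X\rangle/\sigma^2-\|X\|^2/(2\sigma^2)\bigr)+\mathrm{const}.
\]
This is $\sigma^2$ times the log-moment generating function (log-partition function) of a tilted measure, evaluated at $z/\sigma^2$. It is therefore finite, since the integrand is bounded by $\exp(\|z\|^2/(2\sigma^2))$, and it is convex by H\"older's inequality. Differentiation under the integral, justified by dominated convergence using $\E\|X\|^2<\infty$ and the Gaussian factor, gives $\nabla h_\sigma(z)=\E[X\mid Y=z]=D^{\MMSE}_\sigma(z)$. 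This is Tweedie's formula. So $h_\sigma$ is a proper, convex, finite, $\mathcal C^1$ function on $\R^d$.

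\textbf{Step 2 (properties of $\phi_\sigma$).} Since $h_\sigma$ is proper and convex with full domain, $h_\sigma^*$ is convex, lower semicontinuous, and never $-\infty$. It is proper because $h_\sigma^*(\nabla h_\sigma(z))=\langle z,\nabla h_\sigma(z)\rangle-h_\sigma(z)<\infty$ for any $z$. Subtracting the finite continuous function $\frac12\|x\|^2$ preserves properness and lower semicontinuity, so $\phi_\sigma$ is proper and lsc.

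\textbf{Step 3 (prox identity).} Fix $z$ and put $x_0=D^{\MMSE}_\sigma(z)=\nabla h_\sigma(z)$. For any $x$,
\[
\tfrac12\|x-z\|^2+\phi_\sigma(x)=h_\sigma^*(x)-\langle x,z\rangle+\tfrac12\|z\|^2 .
\]
Minimizing this expression over $x$ is the same as maximizing $\langle x,z\rangle-h_\sigma^*(x)$. Because $h_\sigma$ is closed and convex, Fenchel--Moreau gives $h_\sigma^{**}=h_\sigma$, so the supremum equals $h_\sigma(z)$. It is attained exactly on $\partial h_\sigma(z)=\{\nabla h_\sigma(z)\}$, by the Fenchel--Young equality case. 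Hence $x_0$ is the unique minimizer, so the argmin is a singleton and $D^{\MMSE}_\sigma(z)=\prox_{\phi_\sigma}(z)$.

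\textbf{Main obstacle.} The delicate point is Step 1, namely rigorously justifying finiteness, convexity, and differentiation under the expectation under only the assumption $\E\|X\|^2<\infty$, without compact support. I would handle this with local domination. For $z$ in a ball $B(z_0,r)$, the quantity $\|X\|\,\exp(\langle z,X\rangle/\sigma^2-\|X\|^2/(2\sigma^2))$ is bounded by $\|X\|\exp(C_r)$ after completing the square, and this bound is integrable. Everything else follows from standard convex duality.
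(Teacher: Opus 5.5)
Your proof is correct and follows essentially the same route as the paper: you write $\frac12\|\cdot\|^2-g_\sigma$ as a log-partition function to get convexity, identify its gradient with $D^{\MMSE}_\sigma$ via Tweedie's formula, and conclude with the equality case of Fenchel--Young. You additionally verify properness and lower semicontinuity of $\phi_\sigma$ and justify differentiating under the expectation, which the paper leaves implicit; your appeal to Fenchel--Moreau is harmless but unnecessary, since the Fenchel--Young inequality alone gives the lower bound $g_\sigma(z)$.
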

\begin{proof}
By \citet[Theorem~1]{gribonval2020characterization}, it suffices for
proximality to show that $D^{\MMSE}_\sigma$ is the gradient of a convex
function. We then identify the potential explicitly.
Expanding the Gaussian density gives
\begin{equation*}
 p_\sigma(z)=(2\pi\sigma^2)^{-d/2}
 e^{-\|z\|^2/(2\sigma^2)}
 \E\exp\!\left(
   \frac{\langle z,X\rangle-\|X\|^2/2}{\sigma^2}\right),
\end{equation*}
and hence
\begin{equation}
 \frac12\|z\|^2-g_\sigma(z)
 =\sigma^2\log\E\exp\!\left(
   \frac{\langle z,X\rangle-\|X\|^2/2}{\sigma^2}\right)
   -\frac d2\sigma^2\log(2\pi\sigma^2).
 \label{eq:mmse-log-partition}
\end{equation}
The right-hand side is a log-partition function of affine functions
of $z$, up to a constant, so it is convex. 
Using Tweedie's formula, we have
$$D^{\MMSE}_\sigma(z) = \nabla\!\left(\frac12\|z\|^2-g_\sigma(z)\right).$$

For the $\phi_\sigma$ defined in the Lemma, we have
\begin{equation*}
 \frac12\|x-z\|^2+\phi_\sigma(x)
 = \left(\frac12\|\cdot\|^2-g_\sigma\right)^*(x)
    -\langle z,x\rangle+\frac12\|z\|^2,
\end{equation*}
By Fenchel-Young inequality, the right-hand side is greater than
\begin{equation*}
\langle z,x\rangle - \frac12\|z\|^2 + g_\sigma(z)  -\langle z,x\rangle+\frac12\|z\|^2 = g_\sigma(z)
 \end{equation*}
with equality if and only if $x = \nabla(\frac12\|\cdot\|^2-g_\sigma)(z) = D^{\MMSE}_\sigma(z)$
\end{proof}

\begin{proposition}[Properties of the MMSE denoiser]
\label{prop:various_mmse_results}
Suppose \(P_X\) has compact support not contained in an affine
hyperplane, and set
\[
 C=\operatorname{conv}(\operatorname{supp}P_X),
 \qquad U=\operatorname{int}C.
\]
Then, for every \(\sigma>0\), \(D^{\MMSE}_\sigma\) is a smooth
diffeomorphism from \(\R^d\) onto 
\(U\), and \(\operatorname{int}(\operatorname{dom}\phi_\sigma)=U\).
Moreover, the maps \((z,\sigma)\mapsto D^{\MMSE}_\sigma(z)\) and
\((x,\sigma)\mapsto\phi_\sigma(x)\) are jointly smooth on
\(\R^d\times(0,\infty)\) and \(U\times(0,\infty)\), respectively,
\(\phi_\sigma\) is real analytic on \(U\), and
\[
 \nabla\phi_\sigma(x)=(D^{\MMSE}_\sigma)^{-1}(x)-x,
 \qquad x\in U.
\]
\end{proposition}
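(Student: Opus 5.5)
We will work throughout with the convex log-partition function
\[
\psi_\sigma(z):=\frac12\|z\|^2-g_\sigma(z)=\sigma^2\log\E\exp\!\Big(\frac{\langle z,X\rangle-\|X\|^2/2}{\sigma^2}\Big)-\frac d2\sigma^2\log(2\pi\sigma^2),
\]
from \eqref{eq:mmse-log-partition}. The statement then reduces to standard facts about exponential families.

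\emph{Regularity and image of $D^{\MMSE}_\sigma$.} Since $P_X$ has compact support, the integrand and all its $(z,\sigma)$-derivatives are dominated on compacts of $\R^d\times(0,\infty)$. Hence $\psi_\sigma$ is jointly real analytic in $(z,\sigma)$. Tweedie's formula gives $D^{\MMSE}_\sigma=\nabla\psi_\sigma$, so $(z,\sigma)\mapsto D^{\MMSE}_\sigma(z)$ is jointly smooth. For the Hessian, $\nabla^2\psi_\sigma(z)=\sigma^{-2}\operatorname{Cov}(X\mid Y=z)$. The posterior has a density with respect to $P_X$ that is positive everywhere, so its support equals $\operatorname{supp}P_X$. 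A covariance is singular only if the law is carried by an affine hyperplane, which is excluded by hypothesis, so $\nabla^2\psi_\sigma(z)\succ0$. Thus $\psi_\sigma$ is strictly convex, $D^{\MMSE}_\sigma$ is injective, and by the inverse function theorem it is a smooth diffeomorphism onto its (open) image.

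\emph{Identifying the image.} Since $D^{\MMSE}_\sigma(z)$ is a posterior mean, it lies in $C$, and because its image is open it lies in $U$. Surjectivity onto $U$ is the main obstacle. I would use the classical exponential-family result that the gradient of the log-Laplace transform of a measure maps onto the interior of the convex hull of its support (Barndorff-Nielsen, or Rockafellar's essential smoothness argument). Concretely, fix $x\in U$ and minimize $z\mapsto\psi_\sigma(z)-\langle z,x\rangle$. To get a minimizer, we need coercivity of this function. Take a direction $u$ with $\|u\|=1$. Because $x$ is interior, $P_X(\langle X-x,u\rangle>\delta)>0$ for some $\delta>0$ uniform in $u$, by compactness of the unit sphere. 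This yields $\psi_\sigma(tu)-t\langle u,x\rangle\geq t\delta-c$ for large $t$, so the function is coercive. Its minimizer $z$ satisfies $\nabla\psi_\sigma(z)=x$, which gives surjectivity.

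\emph{Properties of $\phi_\sigma$.} Since $\phi_\sigma=\psi_\sigma^*-\frac12\|\cdot\|^2$, we have $\operatorname{dom}\phi_\sigma=\operatorname{dom}\psi_\sigma^*$. For a differentiable convex $\psi_\sigma$, $\operatorname{ri}\operatorname{dom}\psi^*_\sigma\subseteq\operatorname{Im}\nabla\psi_\sigma\subseteq\operatorname{dom}\psi_\sigma^*$ (Rockafellar, Cor.~26.4.1), so $\operatorname{int}\operatorname{dom}\phi_\sigma=U$. On $U$, Legendre duality gives $\psi_\sigma^*(x)=\langle z,x\rangle-\psi_\sigma(z)$ with $z=(D^{\MMSE}_\sigma)^{-1}(x)$ and $\nabla\psi_\sigma^*(x)=z$. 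Therefore $\nabla\phi_\sigma(x)=(D^{\MMSE}_\sigma)^{-1}(x)-x$. Joint smoothness in $(x,\sigma)$ follows from the implicit function theorem applied to $F(z,x,\sigma)=\nabla\psi_\sigma(z)-x$, whose $z$-Jacobian is invertible. This makes $(x,\sigma)\mapsto z(x,\sigma)$ smooth, hence also $\phi_\sigma(x)=\langle z,x\rangle-\psi_\sigma(z)-\frac12\|x\|^2$. The analytic implicit function theorem gives real analyticity of $\phi_\sigma$ on $U$.
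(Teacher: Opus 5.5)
Your proposal is correct and follows essentially the same route as the paper: the convex log-partition function $\psi_\sigma=\frac12\|\cdot\|^2-g_\sigma$ with Hessian $\sigma^{-2}\operatorname{Cov}(X\mid Y=z)\succ0$, a coercivity argument for $\psi_\sigma-\langle\cdot,x\rangle$ to obtain surjectivity onto $U$, Fenchel equality for $\phi_\sigma$ and its gradient, and the (analytic) implicit function theorem for joint regularity. Two local steps differ, and both are valid and arguably cleaner. First, you deduce $\operatorname{Im}D^{\MMSE}_\sigma\subset U$ from openness of the image rather than from the paper's supporting-face argument. Second, you obtain $\operatorname{int}\operatorname{dom}\phi_\sigma=U$ from the sandwich $\operatorname{ri}\operatorname{dom}\psi_\sigma^*\subseteq\operatorname{Im}\nabla\psi_\sigma\subseteq\operatorname{dom}\psi_\sigma^*$, whereas the paper uses a support-function bound and separation.
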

\begin{proof}
Fix \(\sigma>0\), and let
\[
 \nu_\sigma(du)=e^{-\|u\|^2/(2\sigma^2)}P_X(du),
 \qquad
 A_\sigma(\theta)
 =\log\int e^{\langle\theta,u\rangle}\nu_\sigma(du).
\]
The posterior at \(Y=z\) is its exponential tilt with parameter
\(z/\sigma^2\); thus
\[
 D^{\MMSE}_\sigma(z)=\nabla A_\sigma(z/\sigma^2),
 \qquad
 J_zD^{\MMSE}_\sigma(z)
 =\sigma^{-2}\operatorname{Cov}(X\mid Y=z)\succ0.
\]
Each posterior has the same full-dimensional support as \(P_X\);
hence the covariance is positive definite and the mean map is
injective and locally invertible. Its values lie in \(U\): a
posterior mean on the boundary of \(C\) would force the posterior
to be supported on a proper supporting face.

Conversely, if \(x\in U\), finitely many support points have a
convex hull containing a ball around \(x\). Small neighborhoods of
these points have positive \(\nu_\sigma\)-mass and show that, for
some \(c>0\) and \(C_x<\infty\),
\[
 A_\sigma(\theta)-\langle\theta,x\rangle
 \geq c\|\theta\|-C_x.
\]
This strictly convex function attains its minimum, where
\(\nabla A_\sigma(\theta)=x\). Hence
\(\operatorname{Im}(D^{\MMSE}_\sigma)=U\), independently of
\(\sigma\).

Let \(h_\sigma(z)=\frac12\|z\|^2-g_\sigma(z)\).
For \(x\in U\), let \(z=(D^{\MMSE}_\sigma)^{-1}(x)\).
Since \(\nabla h_\sigma(z)=x\), Fenchel equality gives
\begin{equation}
 \phi_\sigma(x)
 =\langle z,x\rangle-\frac12\|z\|^2
   +g_\sigma(z)-\frac12\|x\|^2.
 \label{eq:phi_x_z}
\end{equation}
Thus \(U\subset\operatorname{int}(\operatorname{dom}\phi_\sigma)\).
Compact support also gives
\(h_\sigma(z)\leq\sup_{u\in C}\langle z,u\rangle+C_\sigma\).
Separation makes \(h_\sigma^*(x)=+\infty\) for \(x\notin C\);
therefore \(\operatorname{dom}\phi_\sigma\subset C\) and its
interior equals \(U\).

The Jacobian of
\((z,\sigma)\mapsto(D^{\MMSE}_\sigma(z),\sigma)\)
is invertible. The inverse is jointly smooth on
\(U\times(0,\infty)\), and differentiation of
\eqref{eq:phi_x_z} gives \(\nabla\phi_\sigma(x)=z-x\).
Gaussian convolution makes \(g_\sigma\) and
\(D^{\MMSE}_\sigma\) jointly real analytic for \(\sigma>0\).
The analytic inverse function theorem and \eqref{eq:phi_x_z}
give the same regularity for \(\phi_\sigma\) on
\(U\times(0,\infty)\).
\end{proof}

\paragraph{Learned Gradient-Step denoiser} Assume a gradient-step denoiser  $D_\sigma = \id - \nabla g_\sigma$ with contractive $\nabla g_\sigma$. Then $D_\sigma$ is the proximal operator of a weakly convex potential $\phi_\sigma$~\citep{hurault2022proximal}. In the following Proposition, we prove the different regularity properties of this potential. Note that parts of these proofs are already written in~\citep{hurault2022proximal, gribonval2020characterization}. We keep them here for completeness and clarity.

\begin{proposition}[Regularity of proximal gradient-step denoisers]
\label{prop:gs-proximal-regularity}
Suppose Assumption~\ref{ass:denoiser_gs} holds and, for some
\(\rho<1\), \(\operatorname{Lip}(\nabla g_\sigma)\leq\rho\)
uniformly for \(\sigma\in I_\eps\). Define
\(h_\sigma=\frac12\|\cdot\|^2-g_\sigma\) and
\(\phi_\sigma=h_\sigma^*-\frac12\|\cdot\|^2\), as in
\eqref{eq:phi_sigma}. Then \(D_\sigma=\id-\nabla g_\sigma\)
is a global \(\mathcal C^1\) diffeomorphism of \(\R^d\) and
\(D_\sigma=\operatorname{prox}_{\phi_\sigma}\).
The potential \(\phi_\sigma\) is finite, lower semicontinuous,
jointly \(\mathcal C^2\) and subanalytic in \((x,\sigma)\) on
\(\R^d\times I_\eps\). With \(z=D_\sigma^{-1}(x)\),
\begin{equation}
 \phi_\sigma(x)=g_\sigma(z)-\frac12\|z-x\|^2,
 \qquad
 \nabla\phi_\sigma(x)=z-x,
 \qquad
 \partial_\sigma\phi_\sigma(x)=\partial_\sigma g_\sigma(z).
 \label{eq:pot_phi}
\end{equation}
Moreover, $\nabla \phi_\sigma$ is globally Lipschitz with 
\[
 \operatorname{Lip}(\nabla\phi_\sigma)
 \leq\frac{\rho}{1-\rho}.
\]
Finally, uniformly for \(x\in\R^d\) and
\(\sigma,\sigma'\in I_\eps\),
\begin{equation}
 \begin{gathered}
 \phi_\sigma(x)\geq g_\sigma(x)\geq c_0\|x\|^2-C_0,\\
 \|D_\sigma^{-1}(x)\|
 \leq\frac{\|x\|+B}{1-\rho},
 \qquad B:=\sup_{\sigma\in I_\eps}\|D_\sigma(0)\|<\infty,\\
 |\phi_{\sigma'}(x)-\phi_\sigma(x)|
 \leq C|\sigma'-\sigma|(1+\|x\|^2).
 \end{gathered}
 \label{eq:prox-gs-global-variation}
\end{equation}
Moreover, on every compact \(\mathcal K\subset\R^d\), the quantities
\(\nabla\phi_\sigma\), \(\partial_\sigma\phi_\sigma\), and
\(\partial_\sigma\nabla\phi_\sigma\) are uniformly bounded on
\(\mathcal K\times I_\eps\).
\end{proposition}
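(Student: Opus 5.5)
We build everything on $h_\sigma=\frac12\|\cdot\|^2-g_\sigma$. Since $\operatorname{Lip}(\nabla g_\sigma)\le\rho<1$, the Hessian satisfies $\nabla^2 h_\sigma\succeq(1-\rho)I$, so $h_\sigma$ is $(1-\rho)$-strongly convex and $\mathcal C^2$. Its gradient is $D_\sigma=\nabla h_\sigma$.

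\textbf{Step 1: $D_\sigma$ is a diffeomorphism.} From strong convexity,
\[
\langle D_\sigma x-D_\sigma y,x-y\rangle\ge(1-\rho)\|x-y\|^2 ,
\]
so $D_\sigma$ is strongly monotone. It is therefore injective and coercive. A continuous strongly monotone map is surjective (Minty, or minimize $h_\sigma-\langle\cdot,x\rangle$, which is strongly convex). Its Jacobian $I-\nabla^2g_\sigma$ is invertible, so by the inverse function theorem $D_\sigma$ is a global $\mathcal C^1$ diffeomorphism.

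\textbf{Step 2: proximality and the formulas \eqref{eq:pot_phi}.} We apply the argument of Lemma~\ref{lem:mmse-conjugate-potential} verbatim. Since $h_\sigma$ is convex, Fenchel--Young gives $D_\sigma=\prox_{\phi_\sigma}$, the minimizer being unique. Because $\nabla h_\sigma$ is onto, $h_\sigma^*$ is finite everywhere. For $x=\nabla h_\sigma(z)$, the Fenchel equality $h_\sigma^*(x)=\langle z,x\rangle-h_\sigma(z)$ yields
\[
\phi_\sigma(x)=g_\sigma(z)-\tfrac12\|z-x\|^2 .
\]
Also $\nabla h_\sigma^*=D_\sigma^{-1}$, so $\nabla\phi_\sigma(x)=z-x$.

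\textbf{Step 3: joint regularity and $\partial_\sigma\phi_\sigma$.}
\begin{itemize}[leftmargin=*]
\item \emph{Smoothness.} The map $(z,\sigma)\mapsto(D_\sigma(z),\sigma)$ is $\mathcal C^1$ with invertible Jacobian, so its inverse $(x,\sigma)\mapsto z$ is jointly $\mathcal C^1$. Then $\nabla\phi_\sigma=z-x$ is $\mathcal C^1$, which makes $\phi_\sigma$ jointly $\mathcal C^2$.
\item \emph{Subanalyticity.} The graph of the inverse is the graph of $D$ with coordinates swapped, which is subanalytic. $\phi$ is then a composition of subanalytic maps. Some care is needed with global versus local subanalyticity, which I would treat via the compact parameter set $I_\eps$.
\item \emph{Derivative in $\sigma$.} Differentiating $\phi_\sigma(x)=g_\sigma(z)-\frac12\|z-x\|^2$ in $\sigma$, the terms involving $\partial_\sigma z$ multiply $\nabla g_\sigma(z)-(z-x)$. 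Since $x=z-\nabla g_\sigma(z)$, this factor vanishes, giving $\partial_\sigma\phi_\sigma(x)=\partial_\sigma g_\sigma(z)$.
\end{itemize}

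\textbf{Step 4: Lipschitz constant of $\nabla\phi_\sigma$.} Write $\nabla\phi_\sigma=D_\sigma^{-1}-\Id$. Since $D_\sigma^{-1}=\nabla h_\sigma^*$ and $h_\sigma$ has Hessian between $(1-\rho)I$ and $(1+\rho)I$, the Jacobian $J$ of $D_\sigma^{-1}$ is symmetric with spectrum in $[(1+\rho)^{-1},(1-\rho)^{-1}]$. Hence $J-I$ has spectral norm at most $\frac1{1-\rho}-1=\frac{\rho}{1-\rho}$.

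\textbf{Step 5: the bounds \eqref{eq:prox-gs-global-variation}.}
\begin{itemize}[leftmargin=*]
\item \emph{Lower bound.} Taking $y=x$ in the prox definition, $\phi_\sigma(x)$ is the minimum over $y$ of $\phi_\sigma(y)+\frac12\|y-x\|^2$ plus a correction. The cleaner route is Fenchel--Young directly: $h_\sigma^*(x)\ge\frac12\|x\|^2-h_\sigma(x)$, which gives $\phi_\sigma(x)\ge g_\sigma(x)$. Coercivity of $g_\sigma$ then yields $\phi_\sigma(x)\ge c_0\|x\|^2-C_0$.
\item \emph{Bound on $D_\sigma^{-1}$.} Strong monotonicity with $y=0$ gives $(1-\rho)\|z\|\le\|D_\sigma z-D_\sigma 0\|\le\|x\|+B$. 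Here $B<\infty$ because $\sigma\mapsto D_\sigma(0)$ is continuous on the compact set $I_\eps$.
\item \emph{Variation in $\sigma$.} By the mean value theorem in $\sigma$ and Step 3,
\[
|\phi_{\sigma'}(x)-\phi_\sigma(x)|\le\sup_{s}M_g\bigl(1+\|D_s^{-1}x\|^2\bigr)|\sigma'-\sigma|\le C(1+\|x\|^2)|\sigma'-\sigma| .
\]
\end{itemize}

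\textbf{Step 6: local bounds on compacts.} On a compact $\mathcal K\times I_\eps$, the preimages $z$ stay in a compact set by the bound on $D_\sigma^{-1}$. Then $\nabla\phi_\sigma$ and $\partial_\sigma\phi_\sigma$ are bounded by continuity. For the mixed derivative, $\partial_\sigma\nabla\phi_\sigma=\partial_\sigma z=(I-\nabla^2g_\sigma(z))^{-1}\partial_\sigma\nabla g_\sigma(z)$. This is bounded because $\|(I-\nabla^2 g_\sigma)^{-1}\|\le(1-\rho)^{-1}$ and $g$ is jointly $\mathcal C^2$.

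\textbf{Expected obstacle.} The main difficulty is the global subanalyticity and joint regularity bookkeeping in Step 3. The quantitative bounds are routine.
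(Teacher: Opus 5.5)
Your proposal is correct and follows essentially the same route as the paper: Fenchel--Young for proximality, Fenchel equality for the formula for $\phi_\sigma$, the cancellation $\nabla g_\sigma(z)-(z-x)=0$ for $\partial_\sigma\phi_\sigma$, graph-swapping for subanalyticity, the mean value theorem in $\sigma$, and the implicit-derivative formula on compacts. The only deviations are harmless local variants: strong monotonicity replaces the Banach contraction $z\mapsto x+\nabla g_\sigma(z)$ for bijectivity, and a spectral bound on the Jacobian of $D_\sigma^{-1}$ replaces the paper's metric argument via $\nabla\phi_\sigma(x)=\nabla g_\sigma(D_\sigma^{-1}(x))$ for the Lipschitz constant. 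One slip needs fixing in Step 5: evaluating the supremum defining $h_\sigma^*(x)$ at $z=x$ gives $h_\sigma^*(x)\ge\langle x,x\rangle-h_\sigma(x)=\frac12\|x\|^2+g_\sigma(x)$, whereas your $\frac12\|x\|^2-h_\sigma(x)$ equals $g_\sigma(x)$ and only yields $\phi_\sigma(x)\ge g_\sigma(x)-\frac12\|x\|^2$.
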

\begin{proof}

For every \(x\), the equation \(D_\sigma(z)=x\) is
\(z=x+\nabla g_\sigma(z)\), whose right-hand side is a contraction.
Hence \(D_\sigma\) is bijective, and the inverse function theorem
gives a jointly \(\mathcal C^1\) inverse in \((x,\sigma)\). The bound on \(\nabla^2g_\sigma\) makes \(h_\sigma\) \((1-\rho)\)-strongly convex, 
thus \(h_\sigma^*\) is finite and \(\phi_\sigma\) is also finite and lower semi-continuous. For fixed $z$ and any $u\in\R^d$,
\begin{align*}
\frac12\|u-z\|^2+\phi_\sigma(u)
&=h_\sigma^*(u)-\langle z,u\rangle
  +\frac12\|z\|^2\\
&\geq - h_\sigma(z)+\frac12\|z\|^2
 =g_\sigma(z),
\end{align*}
where the inequality is Fenchel--Young:
$h_\sigma(z)+h_\sigma^*(u)\geq\langle z,u\rangle$.
Equality holds exactly when
$u = \nabla h_\sigma(z) = D_\sigma(z)$. 
Thus $D_\sigma(z)$ attains the lower bound and is the
unique minimizer. Consequently,
$D_\sigma(z)=\operatorname{prox}_{\phi_\sigma}(z)$.

Let $z=D_\sigma^{-1}(x)$. Since
$x=D_\sigma(z)=z-\nabla g_\sigma(z)=\nabla h_\sigma(z)$,
Fenchel equality gives
\[
h_\sigma^*(x)=\langle x,z\rangle-h_\sigma(z).
\]
Substituting the definitions of $h_\sigma$ and $\phi_\sigma$,
\[
\phi_\sigma(x)
=\langle x,z\rangle-\frac12\|z\|^2+g_\sigma(z)
 -\frac12\|x\|^2
=g_\sigma(z)-\frac12\|z-x\|^2.
\]
The chain rule yields
\[
\nabla_x\phi_\sigma(x)
=(J_xz)^\top\bigl(\nabla g_\sigma(z)-(z-x)\bigr)
 +(z-x)=z-x,
\]
because $\nabla g_\sigma(z)=z-x$. Likewise, differentiating
with respect to $\sigma$ while holding $x$ fixed gives
\[
\partial_\sigma\phi_\sigma(x)
=\partial_\sigma g_\sigma(z)
 +\bigl\langle\nabla g_\sigma(z)-(z-x),
                   \partial_\sigma z\bigr\rangle
=\partial_\sigma g_\sigma(z).
\]
As $z$ is jointly $\mathcal C^1$ and $g_\sigma$ is jointly
$\mathcal C^2$, these derivative identities also show that
$\phi_\sigma$ is jointly $\mathcal C^2$.

Now, for $x,y\in\R^d$, write
$u=D_\sigma^{-1}(x)$ and $v=D_\sigma^{-1}(y)$.
Since $D_\sigma=\Id-\nabla g_\sigma$,
\[
\|x-y\|
\geq \|u-v\|
     -\|\nabla g_\sigma(u)-\nabla g_\sigma(v)\|
\geq(1-\rho)\|u-v\|.
\]
Using $\nabla\phi_\sigma(x)=u-x=\nabla g_\sigma(u)$,
we obtain
\[
\|\nabla\phi_\sigma(x)-\nabla\phi_\sigma(y)\|
\leq\rho\|u-v\|
\leq\frac{\rho}{1-\rho}\|x-y\|.
\]

Since $g_\sigma$ is jointly $\mathcal C^2$ and subanalytic, the map $(z,\sigma)\mapsto D_\sigma(z)$,
is subanalytic. The graph of the jointly continuous inverse
$(x,\sigma)\mapsto D_\sigma^{-1}(x)$ is obtained by exchanging
the input and output coordinates in the graph of $D_\sigma$;
it is therefore subanalytic. Finally, the first identity in
\eqref{eq:pot_phi} expresses $\phi_\sigma(x)$ in terms of
$g_\sigma$ and this inverse. Since the inverse is locally
bounded, this composition is subanalytic.

Evaluating the conjugate defining \(\phi_\sigma(x)\) at \(z=x\)
gives \(\phi_\sigma(x)\geq g_\sigma(x)\), hence coercivity.

For \(z=D_\sigma^{-1}(x)\),
\[
 \|x-D_\sigma(0)\|
 =\|z-(\nabla g_\sigma(z)-\nabla g_\sigma(0))\|
 \geq(1-\rho)\|z\|,
\]
which proves the inverse bound. 

Assumption~\ref{ass:denoiser_gs},
\eqref{eq:pot_phi} and~\eqref{eq:prox-gs-global-variation} give
\(|\partial_\sigma\phi_\sigma(x)|\leq C(1+\|x\|^2)\).
Integrating this bound between \(\sigma\) and \(\sigma'\)
proves the variation estimate. 

Finally,
\(\partial_\sigma\nabla\phi_\sigma(x)
 =(I-\nabla^2g_\sigma(z))^{-1}
 \partial_\sigma\nabla g_\sigma(z)\).
The inverse bound sends \(\mathcal K\times I_\eps\) into a
compact set of \(z\)'s, so this formula and joint continuity
give all the asserted compact bounds.
\end{proof}

\paragraph{Common compact-set consequences.}
In the gradient-step case take \(U=\R^d\), and in the MMSE case
take \(U=\operatorname{int}C\). On every compact convex
\(\mathcal K\subset U\), joint \(\mathcal C^2\) regularity bounds
\(\partial_\sigma\phi_\sigma\),
\(\partial_\sigma\nabla\phi_\sigma\), and
\(\nabla\phi_\sigma\) uniformly on
\(\mathcal K\times I_\eps\).
For MMSE, joint continuity also maps every compact set of denoiser
inputs, uniformly over \(\sigma\in I_\eps\), into a compact subset
of \(U\).

\section{Convergence proofs}

\subsection{Proof of Theorem~\ref{thm:gd-pnp}}
\label{app:proof-gd-pnp}

\begin{proof}
Subtracting a lower bound from $f$, which does not change the iterates,
we may assume $f\geq0$.

We want to control $F_{\sigma_{k+1}}(x_{k+1})-F_{\sigma_k}(x_k)$.
Applying the descent lemma to the objective $F_{\sigma_k}$ used at
iteration $k$ gives, with $L=L_f+\lambda L_g$,
\begin{align}
 F_{\sigma_k}(x_{k+1})
 &\leq F_{\sigma_k}(x_k)
   +\langle \nabla F_{\sigma_k}(x_k),x_{k+1}-x_k\rangle
   +\frac{L}{2}\|x_{k+1}-x_k\|^2 \nonumber\\
 &=F_{\sigma_k}(x_k)
   -\gamma\left(1-\frac{L\gamma}{2}\right)\|\nabla F_{\sigma_k}(x_k)\|^2 \nonumber\\
 &\leq F_{\sigma_k}(x_k)-a\gamma\|\nabla F_{\sigma_k}(x_k)\|^2,
 \qquad a:=1-\frac{L\gamma}{2}>0.
 \label{eq:gd-current-descent}
\end{align}

We then prove that the iterates remain bounded. Recall the uniform coercivity from Assumption~\ref{ass:denoiser_gs}:
\begin{equation} \label{eq:unifco}
g_\sigma(x)\geq c_0\|x\|^2-C_0
\end{equation}
Set
$$V_k(x)=1+\lambda C_0+F_{\sigma_k}(x).$$ Then~\eqref{eq:unifco} gives
\begin{equation}
 V_k(x)\geq1+\lambda c_0\|x\|^2.
 \label{eq:boundedness-red-coercivity}
\end{equation}
Moreover,~\eqref{eq:unifco} also implies
\begin{equation}
 1+\|x\|^2\leq C\bigl(1+C_0+g_\sigma(x)\bigr).
 \label{eq:boundedness-energy-controls-norm}
\end{equation}
The mean-value theorem and the bound $|\partial_\sigma g_\sigma(x)| \leq M_g(1+\|x\|^2)$ from Assumption ~\ref{ass:denoiser_gs} give
\begin{equation}
 |g_{\sigma'}(x)-g_\sigma(x)|
 \leq M_g|\sigma'-\sigma|(1+\|x\|^2)
 \leq C|\sigma'-\sigma|\bigl(1+C_0+g_\sigma(x)\bigr).
 \label{eq:boundedness-potential-variation}
\end{equation}

By~\eqref{eq:boundedness-potential-variation},
\begin{equation}
 V_{k+1}(x)\leq(1+a_k)V_k(x),
 \qquad a_k=C|\sigma_{k+1}-\sigma_k|.
\end{equation}
Consequently,
\begin{equation}
 V_{k+1}(x_{k+1})
 \leq(1+a_k)V_k(x_{k+1})
 \leq(1+a_k)V_k(x_k).
\end{equation}
The monotone noise schedule has finite total variation, hence
$\prod_k(1+a_k)<\infty$ and $\sup_kV_k(x_k)<\infty$.  The uniform
coercivity~\eqref{eq:boundedness-red-coercivity} then yields
$\sup_k\|x_k\|<\infty$. 

We can now control the change of objective between two successive noise levels.
Let $\cK$ be a compact set containing all the iterates. Since \((x,\sigma)\mapsto g_\sigma(x)\) is jointly \( \mathcal{C}^2\), continuity of \(\partial_\sigma \nabla g_\sigma(x)\) and compactness of \(\cK\times I_\epsilon\) imply that \(\sigma\mapsto \nabla g_\sigma(x)\) is Lipschitz continuous uniformly for \(x\in\cK\), with constant $L^\sigma_{g,\cK}$.

Since $f$ does not
depend on $\sigma$, the fundamental theorem of calculus and Assumption~\ref{ass:denoiser_gs} give, for every $x\in\cK$ and
$\sigma,\sigma'\in I_\epsilon$,
\begin{align}
 |F_\sigma(x)-F_{\sigma'}(x)|
 &= \lambda |g_\sigma(x)-g_{\sigma'}(x)| \nonumber\\
 &= \lambda\left|
    \int_{\sigma'}^\sigma \partial_s g_s(x)\,ds
    \right| \nonumber\\
 &\le \lambda M_g (1+\|x\|^2)|\sigma-\sigma'| \nonumber\\
 &\le C_{\cK}|\sigma-\sigma'|,
 \label{eq:potential-variation}
\end{align}
where
\[
 C_{\cK}
 := \lambda M_g\left(1+\sup_{x\in\cK}\|x\|^2\right)
 <\infty.
\]
Combining \eqref{eq:gd-current-descent} and
\eqref{eq:potential-variation} at the point $x_{k+1}$ yields
\begin{equation}
 F_{\sigma_{k+1}}(x_{k+1})
 \leq F_{\sigma_k}(x_k)-a\gamma\|\nabla F_{\sigma_k}(x_k)\|^2
      + C_{\cK}|\sigma_{k+1}-\sigma_k|.
 \label{eq:gd-quasi-descent}
\end{equation}
Define
\begin{equation}
 r_k:= C_{\cK}\sum_{j=k}^\infty|\sigma_{j+1}-\sigma_j|.
\end{equation}
It is finite and tends to zero by the monotone convergence of $(\sigma_k)$
in Assumption~\ref{ass:noise-schedule}. Since
$r_k-r_{k+1}= C_{\cK}|\sigma_{k+1}-\sigma_k|$,
\eqref{eq:gd-quasi-descent} implies
\begin{equation}
  F_{\sigma_{k+1}}(x_{k+1})+r_{k+1}
 \leq F_{\sigma_k}(x_k)+r_k
      -a\gamma\|\nabla F_{\sigma_k}(x_k)\|^2.
 \label{eq:gd-corrected-descent}
\end{equation}
Thus $\big(  F_{\sigma_{k}}(x_{k})+r_{k}\big)$ is a decreasing sequence, and bounded below, it is therefore convergent.  Because
$r_k\to 0$, the sequence $ \big(  F_{\sigma_k}(x_k) \big)$ also converges,
and summing \eqref{eq:gd-corrected-descent} gives
\begin{equation}
 \sum_{k=0}^\infty\|\nabla F_{\sigma_k}(x_k)\|^2<\infty.
 \label{eq:gd-current-energy}
\end{equation}
Let $F_\star:=\inf_{\sigma\in I_\eps,\,x\in\cK}F_\sigma(x)>-\infty$.
More precisely, for every $K\geq0$,
\begin{equation}
 \min_{0\leq k\leq K}\|\nabla F_{\sigma_k}(x_k)\|^2
 \leq
 \frac{ F_{\sigma_0}(x_0)- F_\star+
  C_{\cK}\sum_{k=0}^K|\sigma_{k+1}-\sigma_k|}
 {a\gamma(K+1)}.
 \label{eq:gd-current-rate-proof}
\end{equation}
The numerator is uniformly bounded in $K$, proving the rate stated in
\eqref{eq:gd-pnp-limits}.

Since the stepsize is a fixed positive constant,
\eqref{eq:gd-current-energy} directly implies
\begin{equation}
 \|\nabla F_{\sigma_k}(x_k)\|\longrightarrow0.
 \label{eq:gd-current-gradient-limit}
\end{equation}

Lipschitz continuity with respect to $\sigma$ on $\cK$ also gives
\begin{equation}
 \|\nabla F_{\sigma_k}(x_k)-\nabla F_\eps(x_k)\|
 \leq\lambda L^\sigma_{g,\cK} |\sigma_k-\eps|\longrightarrow0.
 \label{eq:gd-terminal-gradient-transfer}
\end{equation}
Combining this with \eqref{eq:gd-current-gradient-limit} proves
$\|\nabla F_\eps(x_k)\|\to0$.  If $x_{k_j}\to\bar x$ is any convergent
subsequence, continuity of $\nabla F_\eps$ gives
$\nabla F_\eps(\bar x)=0$, so every cluster point is critical.

Taking $\sigma'=\eps$ in \eqref{eq:potential-variation} gives
\[
 | F_{\sigma_k}(x_k)- F_\eps(x_k)|
 \leq C_{\cK}|\sigma_k-\eps|\longrightarrow0.
\]
We already proved convergence of the first term; hence
$F_\eps(x_k)$ converges. 

Moreover, since
\[
  x_{k+1}-x_k=-\gamma\nabla F_{\sigma_k}(x_k)
\]
and \eqref{eq:gd-current-gradient-limit}
yields $\|x_{k+1}-x_k\|\to0$.
Since $(x_k)$ is bounded and every cluster point is critical for
$F_\epsilon$, we also obtain
\[
  \operatorname{dist}(x_k,\operatorname{crit}F_\epsilon)\to0.
\]

\paragraph{Rate for the terminal gradient.}
Suppose that $\sum_k|\sigma_k-\eps|^2<\infty$, as in the rate
claim following Theorem~\ref{thm:gd-pnp}. Let
\begin{equation}
 b_k:=\nabla F_{\sigma_k}(x_k)-\nabla F_\eps(x_k)
 =\lambda\bigl(\nabla g_{\sigma_k}(x_k)
                    -\nabla g_\eps(x_k)\bigr),
 \label{eq:gd-envelopes}
\end{equation}
so that $x_{k+1}=x_k-\gamma(\nabla F_\eps(x_k)+b_k)$ and
\begin{equation}
 \|b_k\|\leq\lambda L^{\sigma}_{g, \cK}|\sigma_k-\eps|.
 \label{eq:gd-bias-bound}
\end{equation}
Let $\delta:=2-L\gamma>0$, $c:=\delta/4$, and
$C:=1+1/\delta$.  Using the descent lemma on $F_\eps$ gives
\begin{align}
 F_\eps(x_{k+1})
 \leq{}&F_\eps(x_k)
 -\gamma\left(1-\frac{L\gamma}{2}\right)
       \|\nabla F_\eps(x_k)\|^2\nonumber\\
 &-\gamma(1-L\gamma)
       \langle\nabla F_\eps(x_k),b_k\rangle
 +\frac{L\gamma^2}{2}\|b_k\|^2.
 \label{eq:gd-terminal-expansion}
\end{align}
Since $1-L\gamma/2=\delta/2$,
$|1-L\gamma|\leq1$, and
\[
 \|\nabla F_\eps(x_k)\|\,\|b_k\|
 \leq\frac{\delta}{4}\|\nabla F_\eps(x_k)\|^2+
      \frac{1}{\delta}\|b_k\|^2,
\]
we obtain
\begin{equation}
 F_\eps(x_{k+1})
 \leq F_\eps(x_k)
 -c\gamma\|\nabla F_\eps(x_k)\|^2
 +C\gamma\|b_k\|^2.
 \label{eq:gd-terminal-descent}
\end{equation}
The last terms are summable by
\eqref{eq:gd-bias-bound}.  Since $F_\eps$ is bounded below, summation gives
\begin{equation}
 \min_{0\leq k\leq K}\|\nabla F_\eps(x_k)\|^2
 \leq
 \frac{F_\eps(x_0)-\inf F_\eps+
 C \gamma \lambda^2(L^{\sigma}_{g, \cK})^2
   \sum_{k=0}^K|\sigma_k-\eps|^2}
 {c \gamma (K+1)}.
 \label{eq:gd-terminal-rate-proof}
\end{equation}

\paragraph{Point convergence.}
Finally, for convergence of the sequence, we apply the inexact Kurdyka--\L{}ojasiewicz theorem of
\citet[Theorem~10]{ochs2019inexact}. 

We need to verify \citet[Assumption H]{ochs2019inexact}. Under the
additional decay assumption of Theorem~\ref{thm:gd-pnp},
\eqref{eq:gd-envelopes} gives
\begin{equation}
 e_k:=\|\nabla F_{\sigma_k}(x_k)-\nabla F_\eps(x_k)\|=\mathcal O(k^{-\alpha}),\qquad
 \sum_ke_k<\infty,\qquad \sum_ke_k^2<\infty.
 \label{eq:gd-kl-errors}
\end{equation}
Let $m:=1/\gamma-L/2>0$. Using the identity
\begin{equation} \label{eq:nablaFeps}
 \nabla F_\eps(x_k)=-\frac{x_{k+1}-x_k}{\gamma}-\left( \nabla F_{\sigma_k}(x_k)-\nabla F_\eps(x_k) \right)
\end{equation}
in the descent lemma on $F_\eps$ gives
\begin{align*}
 F_\eps(x_{k+1})
 &\leq F_\eps(x_k)
 -\left(\frac{1}{\gamma}-\frac{L}{2}\right)\|x_{k+1} - x_k\|^2+
 e_k \,\|x_{k+1} - x_k\|
 \end{align*}
 Young's inequality
 \begin{align}
  e_k \,\|x_{k+1} - x_k\| \leq \frac{m}{2} \|x_{k+1} - x_k\|^2 + \frac{1}{2m}e_k^2
 \end{align}
 gives
\begin{align}
 F_\eps(x_{k+1}) \leq F_\eps(x_k)-\frac{m}{2}\|x_{k+1} - x_k\|^2+
      \frac{1}{2m}e_k^2.
 \label{eq:gd-kl-pseudo-descent}
\end{align}
The last term is an additional error term compared to the sufficient decrease condition from Assumption~H1 of
\citet{ochs2019inexact}. We can however absorb it in a different Lyapunov function. Choose an even
integer
\begin{equation*}
 \theta >\frac{2\alpha-1}{2\alpha-2},
\end{equation*}
and define
\begin{equation*}
 t_k:=\left(\frac{ \theta}{2m}\sum_{j=k}^{\infty}e_j^2\right)^{1/ \theta},
 \qquad
 \Phi(x,t):=F_\eps(x)+\frac{t^ \theta}{ \theta}.
\end{equation*}
Then \eqref{eq:gd-kl-pseudo-descent} rewrites on $\Phi(x,t)$ as
\begin{equation*}
 \Phi(x_{k+1},t_{k+1})+\frac m2\|x_{k+1}-x_k\|^2
 \leq \Phi(x_k,t_k),
\end{equation*}
which is the sufficient-decrease condition in Assumption~H1 of
\citet{ochs2019inexact}.  
Now, Lipschitz continuity of $\nabla F_\eps$ gives 
\begin{align*}
 \|\nabla F_\eps(x_{k+1})\|
 &\leq\|\nabla F_\eps(x_{k+1})-\nabla F_\eps(x_k)\|
      +\|\nabla F_\eps(x_k)\|\nonumber\\
 &\leq  L \|x_{k+1} - x_k\| + \|\nabla F_\eps(x_k)\|
\end{align*}
From~\eqref{eq:nablaFeps},
\begin{align*} 
 \| \nabla F_\eps(x_k)  \| &=  \left\|\frac{x_{k+1}-x_k}{\gamma} + \left( \nabla F_{\sigma_k}(x_k)-\nabla F_\eps(x_k) \right)  \right\| \\
 &\leq \frac{1}{\gamma} \|x_{k+1} - x_k\| + e_k
\end{align*}
and thus 
\begin{align*}
 \|\nabla F_\eps(x_{k+1})\|
 &\leq  \left( L + \frac{1}{\gamma}  \right) \|x_{k+1} - x_k\| + e_k.
\end{align*}
And in terms of $\Phi(x,t)$
\begin{equation*}
 \|\nabla\Phi(x_{k+1},t_{k+1})\|
 \leq \left(L+\frac{1}{\gamma}\right)
       \|x_{k+1}-x_k\|+e_k+t_{k+1}^{ \theta-1}.
\end{equation*}
Since $e_k=\mathcal O(k^{-\alpha})$,
\begin{equation*}
 t_k^{ \theta-1}
 =\mathcal O\!\left(k^{-(2\alpha-1)(\theta-1)/\theta}\right).
\end{equation*}
The choice of $\theta$ therefore makes both $(e_k)$ and
$(t_k^{\theta-1})$ summable, and we thus verify the relative-error condition from \citet[Assumption H2]{ochs2019inexact}. Moreover, since $(x_k)$ is
bounded and $t_k\to0$, the sequence $(x_k,t_k)$ is bounded and admits a
convergent subsequence. Continuity of $\Phi$ makes this subsequence
$\Phi$-attentively convergent, which verifies~(H3). The distance and
parameter conditions follow by taking the distance
$\|x_{k+1}-x_k\|$ and constant positive coefficients.

Finally, since $F_\eps$ is subanalytic (as $f$ and $g_\sigma$ are by assumption) and $t\mapsto t^\theta$ is
polynomial, $\Phi$ is subanalytic and hence KL
\citep{bolte2007lojasiewicz}. Since $\theta$ is even and $F_\eps$ is
bounded below, $\Phi$ is also bounded below. Therefore,
\citet[Theorem~10]{ochs2019inexact} yields
\[
 \sum_k\|x_{k+1}-x_k\|<\infty,\qquad
 x_k\longrightarrow x_\star,\qquad
 \nabla F_\eps(x_\star)=0.
\]
\end{proof}

\subsection{Proof of Theorem~\ref{thm:snore}}
\label{app:proof-sgpnp}

\begin{proof}
In this proof, $F_\sigma=f+\lambda\tilde g_\sigma$ denotes the SNORE
objective, with $\tilde g_\sigma$ defined in
\eqref{eq:snore-averaged-potential}. We first check that its Gaussian
average is well defined. Assumption~\ref{ass:denoiser_gs} and continuity
on $I_\eps$ give a constant $C$ such that, uniformly in
$\sigma\in I_\eps$,
\[
 \|\nabla g_\sigma(y)\|\leq C+L_g\|y\|,
 \qquad |g_\sigma(y)|\leq C(1+\|y\|^2).
\]
Thus Gaussian integration and differentiation in $x$ give
\begin{equation}
 \nabla\tilde g_\sigma(x)
 =\E_Z\nabla g_\sigma(x+\sigma Z)
 =\E_Z\bigl[x-D_\sigma(x+\sigma Z)\bigr].
 \label{eq:snore-mean-gradient}
\end{equation}
The uniform $L_g$-Lipschitz continuity of $\nabla g_\sigma$ implies
that $\nabla\tilde g_\sigma$ is also globally $L_g$-Lipschitz. Hence
$\nabla F_\sigma$ is globally $L=L_f+\lambda L_g$-Lipschitz for every
$\sigma\in I_\eps$.

Let $\cF_k$ contain the information available before drawing
$Z_{k+1}$, and set
\[
 \xi_{k+1}:=\lambda\bigl(
  \E_ZD_{\sigma_k}(x_k+\sigma_kZ)
  -D_{\sigma_k}(x_k+\sigma_kZ_{k+1})\bigr).
\]
By \eqref{eq:snore-mean-gradient}, the SNORE update is
\begin{equation}
 x_{k+1}=x_k-\gamma_k(\nabla F_{\sigma_k}(x_k)+\xi_{k+1}),
 \qquad \E[\xi_{k+1}\mid\cF_k]=0.
 \label{eq:snore-moving-recursion}
\end{equation}
Moreover, $D_\sigma=\id-\nabla g_\sigma$ is
$(1+L_g)$-Lipschitz. If $Z'$ and $Z$ are independent, we get, conditionally on $\cF_k$,
\begin{align}
 \E[\|\xi_{k+1}\|^2\mid\cF_k]
 &=\frac{\lambda^2}{2}\E_{Z,Z'}
   \|D_{\sigma_k}(x_k+\sigma_kZ)
     -D_{\sigma_k}(x_k+\sigma_kZ')\|^2\nonumber\\
&\leq \frac{\lambda^2}{2} \sigma_k^2 (1+L_g)^2 \E_{Z,Z'} \|Z - Z' \|^2
\nonumber \\ 
 &\leq \lambda^2\sigma_0^2(1+L_g)^2d := C_\xi.
 \label{eq:snore-variance-bound}
\end{align}

We next control the objective used at iteration $k$, as in the RED--GD
proof. The descent lemma, \eqref{eq:snore-moving-recursion}, and
conditional expectation yield
\begin{equation}
 \E[F_{\sigma_k}(x_{k+1})\mid\cF_k]
 \leq F_{\sigma_k}(x_k)-a\gamma_k\|\nabla F_{\sigma_k}(x_k)\|^2
       +\frac{L}{2} C_\xi \gamma_k^2,
 \qquad a:=1-\frac{L\bar\gamma}{2}>0.
 \label{eq:snore-current-descent}
\end{equation}

We now prove that the iterates remain bounded. After subtracting a
lower bound from $f$, we may assume $f\geq0$. Uniform coercivity of
$g_\sigma$ (Assumption~\ref{ass:denoiser_gs}) passes to its Gaussian average:
\begin{equation}
 \tilde g_\sigma(x)
 \geq c_0\E\|x+\sigma Z\|^2-C_0
 \geq c_0\|x\|^2-C_0.
 \label{eq:snore-coercivity}
\end{equation}
Let
\[
 V_k(x):=1+\lambda C_0+F_{\sigma_k}(x).
\]
Then $V_k(x)\geq1+\lambda c_0\|x\|^2$, and in particular
$1+\|x\|^2\leq C V_k(x)$ for a constant independent of $k$.
Using the chain rule, 
\begin{align}
 |\partial_\sigma\tilde g_\sigma(x)|
 &=\left|\E_Z\left[\partial_\sigma g_\sigma(x+\sigma Z)
       +\langle\nabla g_\sigma(x+\sigma Z),Z\rangle\right]\right|
\end{align}
Assumption~\ref{ass:denoiser_gs} then gives:
\begin{align}
 |\partial_\sigma\tilde g_\sigma(x)|
 \leq C(1+\|x\|^2).
  \label{eq:snore-potential-derivative}
\end{align}
Consequently, with the mean-value theorem, we get, for all $x$ and
$\sigma,\sigma'\in I_\eps$,
\begin{equation}
 |F_{\sigma'}(x)-F_\sigma(x)|
 \leq C|\sigma'-\sigma|(1+\|x\|^2).
 \label{eq:pot_var}
\end{equation}
Writing $a_k=C|\sigma_{k+1}-\sigma_k|$ with a larger constant if
necessary, we obtain $V_{k+1}(x)\leq(1+a_k)V_k(x)$.
Combining this inequality with \eqref{eq:snore-current-descent} gives
\begin{equation}
 \E[V_{k+1}(x_{k+1})\mid\cF_k]
 \leq(1+a_k)V_k(x_k)-a\gamma_k\|\nabla F_{\sigma_k}(x_k)\|^2+b_k,
 \qquad b_k:=\frac{L}{2}C_\xi(1+a_k)\gamma_k^2.
 \label{eq:snore-coercive-descent}
\end{equation}
The monotone schedule has finite total variation, so
$\sum_k a_k<\infty$, and the stepsize condition gives
$\sum_k b_k<\infty$. The Robbins--Siegmund
almost-supermartingale theorem
\citep[Theorem~1]{robbins1971convergence}, applied to
\eqref{eq:snore-coercive-descent}, shows that $V_k(x_k)$
converges almost surely to a finite random variable and, since $a>0$,
\begin{equation}
 \sum_k\gamma_k\|\nabla F_{\sigma_k}(x_k)\|^2<\infty
 \quad\text{almost surely}.
 \label{eq:snore-current-energy}
\end{equation}
The coercivity bound $1+\|x\|^2\leq C V_k(x)$  also gives
$\sup_k\|x_k\|<\infty$ almost surely.

Taking expectations in \eqref{eq:snore-coercive-descent} also gives
$\sup_k\E[V_k(x_k)]<\infty$. Summing the same inequality through
$K$ yields
\begin{equation}
 a\sum_{k=0}^K\gamma_k\E[\|\nabla F_{\sigma_k}(x_k)\|^2]
 \leq \E[V_0(x_0)]+ \sup_k\E[V_k(x_k)]\sum_{k=0}^K a_k
                    +\sum_{k=0}^K b_k.
 \label{eq:snore-rate-proof}
\end{equation}
The right-hand side is bounded independently of $K$; division by
$\sum_{k=0}^K\gamma_k$ proves \eqref{eq:sg-current-rate}.

Now that we have bounded iterates, we can use local regularity in the noise level. Fix a sample path
on which the above conclusions hold, and let $\cK$ be a compact convex
ball containing all its iterates. 

Recall the parameter-derivative identity
$$
\partial_\sigma\tilde g_\sigma(x)
=\mathbb E\!\left[\partial_\sigma g_\sigma(x+\sigma Z)
+\langle\nabla g_\sigma(x+\sigma Z),Z\rangle\right].
$$
Differentiating with respect to $x$ and applying the Gaussian integration by
parts
$\nabla_x\E_Z[\phi(x+\sigma Z)]
=\sigma^{-1}\E_Z[Z\phi(x+\sigma Z)]$, we get
\begin{equation}
 \partial_\sigma\nabla\tilde g_\sigma(x)
 =\frac1\sigma\E_Z\!\left[Z\,\partial_\sigma
             g_\sigma(x+\sigma Z)\right]
  +\E_Z\!\left[\nabla^2g_\sigma(x+\sigma Z)Z\right].
 \label{eq:snore-noise-gradient-derivative}
\end{equation}
The first expectation is bounded on $\cK\times I_\eps$ by the
quadratic growth of $\partial_\sigma g_\sigma$ (Assumption~\ref{ass:denoiser_gs}), and the second by
$L_g\E\|Z\|$. Thus there is a finite constant
$L^\sigma_{\tilde g,\cK}$ such that
\begin{equation}
 \sup_{x\in\cK}
 \|\nabla F_\sigma(x)-\nabla F_{\sigma'}(x)\|
 \leq\lambda L^\sigma_{\tilde g,\cK}|\sigma-\sigma'|,
 \qquad \sigma,\sigma'\in I_\eps.
 \label{eq:snore-gradient-convergence}
\end{equation}
In particular, $
 \|\nabla F_{\sigma_k}(x_k)-\nabla F_{\epsilon}(x_k)\| \to 0$.

Consider the martingale $M_n:=\sum_{j=0}^{n-1}\gamma_j\xi_{j+1}$. Since $\sum\gamma_k^2<\infty$ and \eqref{eq:snore-variance-bound}, it converges almost
surely.  
On a sample path where this holds, let
\[
 A_n:=\sum_{j=n}^\infty\gamma_j\|\,\nabla F_{\sigma_j} (x_j)\|^2,
 \qquad R_n:=\sup_{m\geq n}\|M_m-M_n\|.
\]
Both tend to zero. For any $T>0$, let $m(n)$ be the first index such
that $\sum_{j=n}^{m(n)-1}\gamma_j\geq T$. For $n\leq l\leq m(n)$, summing
the recursion~\eqref{eq:snore-moving-recursion} gives
\[
 x_l-x_n
 =-\sum_{j=n}^{l-1}\gamma_j \nabla F_{\sigma_j}(x_j) -(M_l-M_n).
\]
Hence, by Cauchy--Schwarz,
\begin{align*}
 \|x_l-x_n\|
 &\leq \sum_{j=n}^{l-1}\gamma_j\|\nabla F_{\sigma_j}(x_j)\|+R_n\\
 &\leq
 \left(\sum_{j=n}^{l-1}\gamma_j\right)^{1/2}
 \left(\sum_{j=n}^{l-1}\gamma_j\|\nabla F_{\sigma_j}(x_j)\|^2\right)^{1/2}
 +R_n.
\end{align*}
By the minimality of $m(n)$ and $\gamma_j\leq\bar\gamma$,
the first sum is at most $T+\bar\gamma$; the second is at
most $A_n$. The bound is uniform in $l$, so
\begin{equation}
 \max_{n\leq l\leq m(n)}\|x_l-x_n\|
 \leq\sqrt{(T+\bar\gamma)A_n}+R_n\longrightarrow0.
 \label{eq:snore-window}
\end{equation}
By the global $L$-Lipschitz continuity in $x$ and
\eqref{eq:snore-gradient-convergence},
\[
 \max_{n\leq l\leq m(n)}\|\nabla F_{\sigma_l}(x_l)-\nabla F_{\sigma_n}(x_n)\|
 \leq L\max_{n\leq l\leq m(n)}\|x_l-x_n\|
 +2\lambda L^\sigma_{\tilde g,\cK}
    \sup_{j\geq n}|\sigma_j-\eps|\longrightarrow0.
\]
Suppose that $\|\nabla F_{\sigma_n}(x_n)\|$ does not tend to zero.
Then there are $\eta>0$ and arbitrarily large indices $n$ such that
$\|\nabla F_{\sigma_n}(x_n)\|\geq\eta$. The preceding uniform
gradient estimate shows that, for every sufficiently large such $n$,
\[
 \max_{n\leq l\leq m(n)}
 \|\nabla F_{\sigma_l}(x_l)
   -\nabla F_{\sigma_n}(x_n)\|
 \leq\frac{\eta}{2}.
\]
The triangle inequality therefore gives
$\|\nabla F_{\sigma_l}(x_l)\|\geq\eta/2$ for every
$n\leq l\leq m(n)$. By the definition of $m(n)$,
\[
 A_n
 \geq\sum_{j=n}^{m(n)-1}
       \gamma_j\|\nabla F_{\sigma_j}(x_j)\|^2
 \geq\frac{\eta^2}{4}
       \sum_{j=n}^{m(n)-1}\gamma_j
 \geq\frac{T\eta^2}{4},
\]
contradicting $A_n\to0$. Thus
$\|\nabla F_{\sigma_k}(x_k)\|\to0$, and
\eqref{eq:snore-gradient-convergence} gives
$\|\nabla F_\eps(x_k)\|\to0$.

Finally, we get from~\eqref{eq:pot_var} $|F_{\sigma_k}(x_k)-F_\eps(x_k)|\to0$. Since $V_k(x_k)$
converges, so does $F_\eps(x_k)$. Moreover,
$\gamma_k\to0$, $\nabla F_{\sigma_k}(x_k)$ is bounded, and convergence of $M_n$ implies
$\gamma_k\xi_{k+1}\to0$. Hence
$\|x_{k+1}-x_k\|\to0$. Continuity of
$\nabla F_\eps$ makes every cluster point critical; boundedness and
compactness of $\cK$ yield
$\dist(x_k,\crit F_\eps)\to0$.
\end{proof}

\subsection{Annealed ERED}
\label{app:ered}

We consider a probability
distribution $\pi$ supported on a subgroup of orthogonal transformations. The equivariant RED iterates (ERED)~\citep{terris2024} write:
\begin{equation}
\text{(ERED)}\qquad
x_{k+1}=x_k-\gamma_k\Bigl[\nabla f(x_k)
+\lambda\bigl(x_k-Q_{k+1}^\top
D_{\sigma_k}(Q_{k+1}x_k)\bigr)\Bigr],
\qquad Q_{k+1}\overset{\mathrm{i.i.d.}}{\sim}\pi.
\label{eq:ered}
\end{equation}
For the conservative denoiser~\eqref{eq:gs-denoiser}, set
\begin{equation}
 \bar g_\sigma(x)=\E_Q[g_\sigma(Qx)],\qquad
 F_\sigma(x)=f(x)+\lambda\bar g_\sigma(x).
 \label{eq:ered-potential}
\end{equation}
The compactness of $O(\mathbb R^d)$ and Assumption~\ref{ass:denoiser_gs} justify
differentiation under the expectation and give
\begin{equation}
 \nabla\bar g_\sigma(x)
 =\E_Q\bigl[Q^\top\nabla g_\sigma(Qx)\bigr].
 \label{eq:ered-mean-gradient}
\end{equation}

The following theorem gives for ERED~\eqref{eq:ered} the
same convergence guarantees as Theorem~\ref{thm:snore} gives for SNORE.
\begin{theorem}[Convergence of ERED~\eqref{eq:ered}]
\label{thm:ered}
Suppose Assumptions~\ref{ass:noise-schedule},
\ref{ass:prox-lipschitz-x}, and~\ref{ass:denoiser_gs} hold, and the
transformations are sampled as above. For a deterministic starting
point $x_0$, suppose the stepsizes satisfy
\begin{equation}
 0<\gamma_k\leq\bar\gamma,\qquad
 (L_f+\lambda L_g)\bar\gamma<2,\qquad
 \sum_k\gamma_k=\infty, \qquad
 \sum_k\gamma_k^2<\infty.
 \label{eq:ered-descent-conditions}
\end{equation}
Then the iterates are almost surely bounded, $ F_\epsilon(x_k)$
converges almost surely, and
\begin{equation}
 \sum_k\gamma_k\|\nabla F_{\sigma_k}(x_k)\|^2<\infty,
 \qquad\|\nabla F_\epsilon(x_k)\|\to0,
 \qquad\operatorname{dist}(x_k,\operatorname{crit} F_\epsilon)
 \to0\quad\text{almost surely}.
 \label{eq:ered-limits}
\end{equation}
Moreover, $\|x_{k+1}-x_k\|\to0$ almost surely and
\begin{equation}
 \min_{0\leq k\leq K}
 \E\|\nabla F_{\sigma_k}(x_k)\|^2
 =\mathcal O\!\left((\textstyle\sum_{k=0}^{K}\gamma_k)^{-1}\right).
 \label{eq:ered-current-rate}
\end{equation}
\end{theorem}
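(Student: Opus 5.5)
The plan is to mirror the proof of Theorem~\ref{thm:snore} step by step. The Gaussian average $\E_Z[\cdot(x+\sigma Z)]$ is replaced by the group average $\E_Q[\cdot(Qx)]$. The structure is simpler here because the random transformation is an isometry.

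\textbf{Step 1: regularity of the averaged objective.} I would first record that $\nabla\bar g_\sigma$ is globally $L_g$-Lipschitz. This holds because each map $x\mapsto Q^\top\nabla g_\sigma(Qx)$ is $L_g$-Lipschitz, $Q$ being orthogonal, and an average of such maps keeps the constant. Hence $\nabla F_\sigma$ is $L=L_f+\lambda L_g$-Lipschitz for every $\sigma\in I_\eps$. Since $Q$ is orthogonal, $\|Qx\|=\|x\|$, so the bounds of Assumption~\ref{ass:denoiser_gs} transfer directly:
\[
\bar g_\sigma(x)\geq c_0\|x\|^2-C_0,\qquad |\partial_\sigma\bar g_\sigma(x)|=|\E_Q\partial_\sigma g_\sigma(Qx)|\leq M_g(1+\|x\|^2).
\]
There is no extra chain-rule term here, unlike SNORE, because the transformation does not depend on $\sigma$. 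On a compact $\cK$, the map $\sigma\mapsto\nabla\bar g_\sigma(x)=\E_Q[Q^\top\nabla g_\sigma(Qx)]$ is Lipschitz uniformly in $x\in\cK$. This follows from joint $\mathcal C^2$ regularity of $g$ on the compact set $\{Qx:Q\in O(\R^d),x\in\cK\}\times I_\eps$.

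\textbf{Step 2: descent and boundedness.} I would write the update as $x_{k+1}=x_k-\gamma_k(\nabla F_{\sigma_k}(x_k)+\xi_{k+1})$ with $\xi_{k+1}=\lambda(\nabla\bar g_{\sigma_k}(x_k)-Q_{k+1}^\top\nabla g_{\sigma_k}(Q_{k+1}x_k))$. Then $\E[\xi_{k+1}\mid\cF_k]=0$ by~\eqref{eq:ered-mean-gradient}. Bounding the variance is the only genuinely new point, and I expect it to be the main (mild) obstacle. Unlike SNORE, the variance is not uniformly bounded. Instead,
\[
\E[\|\xi_{k+1}\|^2\mid\cF_k]\leq\lambda^2\E_Q\|\nabla g_{\sigma_k}(Qx_k)\|^2\leq C(1+\|x_k\|^2)\leq C'V_k(x_k),
\]
using $\|\nabla g_\sigma(y)\|\leq C+L_g\|y\|$ and the coercivity $1+\|x\|^2\leq CV_k(x)$ with $V_k=1+\lambda C_0+F_{\sigma_k}$ (after shifting $f\geq0$). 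The descent lemma then gives
\[
\E[V_{k+1}(x_{k+1})\mid\cF_k]\leq(1+a_k+C''\gamma_k^2)V_k(x_k)-a\gamma_k\|\nabla F_{\sigma_k}(x_k)\|^2,
\]
where $a_k=C|\sigma_{k+1}-\sigma_k|$ comes from the $\sigma$-variation bound as in~\eqref{eq:pot_var}. Because $\sum_k(a_k+\gamma_k^2)<\infty$, Robbins--Siegmund still applies, with the variance absorbed into the multiplicative factor. It yields a.s.\ convergence of $V_k(x_k)$, a.s.\ boundedness of $(x_k)$, and $\sum_k\gamma_k\|\nabla F_{\sigma_k}(x_k)\|^2<\infty$ a.s. Taking expectations and using $\prod_k(1+a_k+C''\gamma_k^2)<\infty$ gives $\sup_k\E V_k(x_k)<\infty$. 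Summing then yields the rate~\eqref{eq:ered-current-rate}, exactly as in~\eqref{eq:snore-rate-proof}. The deterministic $x_0$ makes $\E V_0(x_0)$ finite.

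\textbf{Step 3: stationarity.} The martingale $M_n=\sum_{j<n}\gamma_j\xi_{j+1}$ has $\sum_j\gamma_j^2\E\|\xi_{j+1}\|^2\leq C\sum_j\gamma_j^2\E V_j(x_j)<\infty$, so it converges a.s. The window argument of Theorem~\ref{thm:snore} then applies verbatim, using the $L$-Lipschitz bound in $x$ and the uniform $\sigma$-Lipschitz bound on $\cK$ from Step~1. It gives $\|\nabla F_{\sigma_k}(x_k)\|\to0$ and hence $\|\nabla F_\eps(x_k)\|\to0$. Convergence of $F_\eps(x_k)$ follows from convergence of $V_k(x_k)$ together with $|F_{\sigma_k}-F_\eps|\leq C|\sigma_k-\eps|$ on $\cK$. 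Finally, $\|x_{k+1}-x_k\|\to0$ follows from $\gamma_k\to0$, bounded gradients, and $\gamma_k\xi_{k+1}=M_{k+1}-M_k\to0$. Cluster-point criticality and $\dist(x_k,\crit F_\eps)\to0$ follow from continuity and compactness.
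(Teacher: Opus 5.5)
Your proposal is correct and follows essentially the same route as the paper's proof. Both arguments transfer the coercivity and $\sigma$-variation bounds through orthogonality, bound the state-dependent variance by $\E[\|\xi_{k+1}\|^2\mid\cF_k]\le C\,V_k(x_k)$ and absorb it into the multiplicative Robbins--Siegmund factor, use an $L^2$-bounded martingale, and then reuse the SNORE window argument. One trivial slip: for the update to read $x_{k+1}=x_k-\gamma_k(\nabla F_{\sigma_k}(x_k)+\xi_{k+1})$, the noise must be $\xi_{k+1}=\lambda\bigl(Q_{k+1}^\top\nabla g_{\sigma_k}(Q_{k+1}x_k)-\nabla\bar g_{\sigma_k}(x_k)\bigr)$, the opposite sign of what you wrote; this is harmless, since only its zero conditional mean and its second moment are ever used.
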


\begin{proof} The proof follows the same arguments as the proof of SNORE.
Let $L:=L_f+\lambda L_g$. Orthogonality and
\eqref{eq:ered-mean-gradient} give, for every $x,y\in\R^d$,
\[
 \|\nabla\bar g_\sigma(x)-\nabla\bar g_\sigma(y)\|
 \leq\E_Q\|\nabla g_\sigma(Qx)-\nabla g_\sigma(Qy)\|
 \leq L_g\|x-y\|.
\]
Thus $\nabla F_\sigma$ is globally $L$-Lipschitz, uniformly in
$\sigma$. Let $\cF_k$ contain the information available before
drawing $Q_{k+1}$, and set
\[
 \xi_{k+1}:=\lambda\bigl(
 Q_{k+1}^\top\nabla g_{\sigma_k}(Q_{k+1}x_k)
 -\nabla\bar g_{\sigma_k}(x_k)\bigr).
\]
Then ERED~\eqref{eq:ered} becomes
\begin{equation}
 x_{k+1}=x_k-\gamma_k(\nabla F_{\sigma_k}(x_k)+\xi_{k+1}),
 \qquad \E[\xi_{k+1}\mid\cF_k]=0.
 \label{eq:ered-moving-recursion}
\end{equation}

The variance of this stochastic gradient depends on $x_k$.
Continuity on $I_\eps$ gives
$C_g:=\sup_{\sigma\in I_\eps}\|\nabla g_\sigma(0)\|<\infty$.
Since $Q$ is orthogonal,
\[
 \|Q^\top\nabla g_\sigma(Qx)\|
 \leq C_g+L_g\|x\|,
\]
and conditional centering yields
\begin{equation}
 \E[\|\xi_{k+1}\|^2\mid\cF_k]
 \leq 2\lambda^2(C_g^2+L_g^2\|x_k\|^2).
 \label{eq:ered-variance-growth}
\end{equation}
The descent lemma and $\E[\xi_{k+1}\mid\cF_k]=0$ now give
\begin{equation}
 \E[ F_{\sigma_k}(x_{k+1})\mid\cF_k]
 \leq F_{\sigma_k}(x_k)
       -a\gamma_k\|\nabla F_{\sigma_k}(x_k)\|^2
       +\frac L2\gamma_k^2
          \E[\|\xi_{k+1}\|^2\mid\cF_k],
 \qquad a:=1-\frac{L\bar\gamma}{2}>0.
 \label{eq:ered-current-descent}
\end{equation}

We prove boundedness of the iterates. Subtracting a
lower bound from $f$, we may assume $f\geq0$. Orthogonality and
Assumption~\ref{ass:denoiser_gs} imply
\begin{equation}
 \bar g_\sigma(x)\geq c_0\|x\|^2-C_0,
 \qquad
 |\partial_\sigma\bar g_\sigma(x)|
 =|\E_Q\partial_\sigma g_\sigma(Qx)|
 \leq M_g(1+\|x\|^2).
 \label{eq:ered-coercivity-variation}
\end{equation}
Set $V_k(x):=1+\lambda C_0+ F_{\sigma_k}(x)$. Then
$V_k(x)\geq1+\lambda c_0\|x\|^2$, and hence
$1+\|x\|^2\leq C V_k(x)$ uniformly in $k$. The mean-value theorem
therefore gives
\begin{equation}
 | F_{\sigma'}(x)- F_\sigma(x)|
 \leq\lambda M_g|\sigma'-\sigma|(1+\|x\|^2),
 \qquad
 V_{k+1}(x)\leq(1+a_k)V_k(x),
 \label{eq:ered-potential-variation}
\end{equation}
where $a_k=C|\sigma_{k+1}-\sigma_k|$. By
\eqref{eq:ered-variance-growth} and the coercivity bound, there is a
deterministic $C_\xi$ such that
$\E[\|\xi_{k+1}\|^2\mid\cF_k]\leq C_\xi V_k(x_k)$.
Write $b_k=(L/2)C_\xi\gamma_k^2$ and
$\alpha_k=(1+a_k)(1+b_k)-1$. Combining
\eqref{eq:ered-current-descent} with
\eqref{eq:ered-potential-variation} yields
\begin{equation}
 \E[V_{k+1}(x_{k+1})\mid\cF_k]
 \leq(1+\alpha_k)V_k(x_k)-a\gamma_k\|\nabla F_{\sigma_k}(x_k)\|^2.
 \label{eq:ered-coercive-descent}
\end{equation}
The monotone schedule and the stepsize assumptions give
$\sum_k\alpha_k<\infty$. The Robbins--Siegmund
almost-supermartingale theorem applied to
\eqref{eq:ered-coercive-descent} shows that $V_k(x_k)$ converges
almost surely to a finite random variable and
\begin{equation}
 \sum_k\gamma_k\|\nabla F_{\sigma_k}(x_k)\|^2<\infty
 \quad\text{almost surely}.
 \label{eq:ered-current-energy}
\end{equation}
Because $1+\|x_k\|^2\leq C V_k(x_k)$, the iterates are almost surely
bounded.

Taking expectations in \eqref{eq:ered-coercive-descent} gives
$\sup_k\E V_k(x_k)<\infty$. Summing the same inequality through $K$
then yields
\[
 a\sum_{k=0}^K\gamma_k\E\big[\|\nabla F_{\sigma_k}(x_k)\|^2]\big]
 \leq \E \big[V_0(x_0)\big]
      +\sup_j\E \big[V_j(x_j)\big]\sum_{k=0}^K\alpha_k.
\]
The right-hand side is bounded independently of $K$, so division by
$\sum_{k=0}^K\gamma_k$ proves \eqref{eq:ered-current-rate}.

Fix a sample path on which the iterates are bounded and let $\cK$ be
a compact ball centered at the origin and containing them. Since
$Q\cK=\cK$ for $Q\in O(d)$
and $g_\sigma$ is jointly $C^2$,
\[
 L^\sigma_{g,\cK}
 :=\sup_{\substack{y\in\cK\\\sigma\in I_\eps}}
       \|\partial_\sigma\nabla g_\sigma(y)\|<\infty.
\]
Using \eqref{eq:ered-mean-gradient} and the mean-value theorem, we get
\begin{equation}
 \sup_{x\in\cK}
 \|\nabla F_\sigma(x)-\nabla F_{\sigma'}(x)\|
 \leq\lambda L^\sigma_{g,\cK}|\sigma-\sigma'|.
 \label{eq:ered-noise-regularity}
\end{equation}
In particular,
$\|\nabla F_{\sigma_k}(x_k)-\nabla F_\eps(x_k)\|\to0$.

Consider the martingale $M_n:=\sum_{j=0}^{n-1}\gamma_j\xi_{j+1}$.
Its increments are orthogonal in $L^2$, and the expectation bound above
gives
\[
 \sup_n\E\|M_n\|^2
 \leq C_\xi\sup_j\E V_j(x_j)\sum_j\gamma_j^2<\infty.
\]
Thus $M_n$ converges almost surely. On a sample path where this holds,
set
\[
 A_n:=\sum_{j=n}^\infty\gamma_j\|\nabla F_{\sigma_j}(x_j)\|^2,
 \qquad R_n:=\sup_{m\geq n}\|M_m-M_n\|.
\]
Both tend to zero. Fix $T>0$, and let $m(n)$ be the first index such
that $\sum_{j=n}^{m(n)-1}\gamma_j\geq T$. For
$n\leq l\leq m(n)$, summing \eqref{eq:ered-moving-recursion} and
applying Cauchy--Schwarz gives
\begin{align*}
 \|x_l-x_n\|
 &\leq\sum_{j=n}^{l-1}\gamma_j\|\nabla F_{\sigma_j}(x_j)\|+R_n\\
 &\leq\left(\sum_{j=n}^{l-1}\gamma_j\right)^{1/2}
          \left(\sum_{j=n}^{l-1}\gamma_j\|\nabla F_{\sigma_j}(x_j)\|^2\right)^{1/2}+R_n.
\end{align*}
The first sum is at most $T+\bar\gamma$ by the minimality of $m(n)$,
and the second is at most $A_n$. Therefore
\begin{equation}
 \max_{n\leq l\leq m(n)}\|x_l-x_n\|
 \leq\sqrt{(T+\bar\gamma)A_n}+R_n\longrightarrow0.
 \label{eq:ered-window}
\end{equation}
The global $L$-Lipschitz bound and
\eqref{eq:ered-noise-regularity} then imply
\[
 \max_{n\leq l\leq m(n)}\|h_l-h_n\|
 \leq L\max_{n\leq l\leq m(n)}\|x_l-x_n\|
    +2\lambda L^\sigma_{g,\cK}
       \sup_{j\geq n}|\sigma_j-\eps|\longrightarrow0.
\]
If $\|h_n\|\not\to0$, there are $\eta>0$ and arbitrarily large
$n$ with $\|h_n\|\geq\eta$. For every sufficiently large such $n$,
the last estimate gives $\|h_l\|\geq\eta/2$ throughout the window,
so
\[
 A_n\geq\sum_{j=n}^{m(n)-1}\gamma_j\|\nabla F_{\sigma_j}(x_j)\|^2
     \geq\frac{\eta^2}{4}
             \sum_{j=n}^{m(n)-1}\gamma_j
     \geq\frac{T\eta^2}{4},
\]
contradicting $A_n\to0$. Hence $\|\nabla F_{\sigma_k}(x_k)\|\to0$, and
\eqref{eq:ered-noise-regularity} gives
$\|\nabla F_\eps(x_k)\|\to0$.

Finally, \eqref{eq:ered-potential-variation} and boundedness imply
$| F_{\sigma_k}(x_k)- F_\eps(x_k)|\to0$. Since $V_k(x_k)$
converges, so does $ F_\eps(x_k)$. Also $\gamma_k\to0$,
$(\nabla F_{\sigma_k}(x_k))$ is bounded, and convergence of $M_n$ implies
$\gamma_k\xi_{k+1}\to0$. The recursion therefore gives
$\|x_{k+1}-x_k\|\to0$. Continuity of $\nabla F_\eps$ makes every
cluster point critical, and compactness of $\cK$ gives
$\operatorname{dist}(x_k,\operatorname{crit} F_\eps)\to0$.
\end{proof}

\subsection{Proof of Theorem~\ref{thm:prox-pgd}}
\label{app:proof-prox-pgd}

\begin{proof}
We first spell out the consequences of Assumption~\ref{ass:regular-proximal-denoiser}.
In both cases, $D_\sigma=\operatorname{prox}_{\phi_\sigma}$ with the
potential in \eqref{eq:phi_sigma}. For a gradient-step denoiser,
Proposition~\ref{prop:gs-proximal-regularity} shows that $D_\sigma$
is a global diffeomorphism and $\phi_\sigma$ is jointly $\mathcal C^2$
on $U\times I_\eps$, where $U=\R^d$. For an exact MMSE
denoiser, Proposition~\ref{prop:various_mmse_results} gives the same
joint regularity on the common open convex set
$U=\operatorname{int}(\operatorname{conv}(\operatorname{supp}P_X))$,
and $D_\sigma$ maps $\R^d$ into $U$. In this case $U$ is bounded
because the prior has compact support. In either case, $\phi_\eps$
is subanalytic on $U$ by Appendix~\ref{app:denoiser-verification}.
Set $\Psi_\sigma:=\gamma F_\sigma=\gamma f+\phi_\sigma$;
in particular, $\Psi_\eps=\gamma F_\eps$.

We begin with descent for the objective used at iteration $k$.
Write $z_k=x_k-\gamma\nabla f(x_k)$. Since
$x_{k+1}=\operatorname{prox}_{\phi_{\sigma_k}}(z_k)$, comparison
with $x_k\in U$ gives
\begin{equation}
 \phi_{\sigma_k}(x_{k+1})
 \leq\phi_{\sigma_k}(x_k)
 -\gamma\langle\nabla f(x_k),x_{k+1}-x_k\rangle
 -\frac12\|x_{k+1}-x_k\|^2.
 \label{eq:prox-pgd-proximal-descent}
\end{equation}
The descent lemma for $f$ gives
\begin{equation}
 \gamma f(x_{k+1})
 \leq\gamma f(x_k)
 +\gamma\langle\nabla f(x_k),x_{k+1}-x_k\rangle
 +\frac{\gamma L_f}{2}\|x_{k+1}-x_k\|^2.
 \label{eq:prox-pgd-forward-descent}
\end{equation}
Adding the two inequalities, we obtain
\begin{equation}
 \Psi_{\sigma_k}(x_{k+1})
 \leq\Psi_{\sigma_k}(x_k)-m\|x_{k+1}-x_k\|^2,
 \qquad m:=\frac{1-\gamma L_f}{2}>0.
 \label{eq:prox-pgd-current-descent}
\end{equation}

We next prove boundedness, before using constants on a compact set.
In the MMSE case, $x_{k+1}=D_{\sigma_k}(z_k)\in U$ for every $k$.
We may start the descent argument at $k=1$, since discarding the first
update does not affect the asymptotic conclusions or the rate from
indices $1$ to $K$. For notational simplicity, we relabel this tail
from $k=0$. Thus the iterates used below lie in the bounded
set $U$. In the gradient-step case,
Proposition~\ref{prop:gs-proximal-regularity} gives
\begin{equation}
 \phi_\sigma(x)\geq g_\sigma(x)
 \geq c_0\|x\|^2-C_0.
 \label{eq:prox-pgd-coercivity}
\end{equation}
The same proposition gives the global variation estimate, uniformly
for $x\in\R^d$ and $\sigma,\sigma'\in I_\eps$,
\begin{equation}
 |\phi_{\sigma'}(x)-\phi_\sigma(x)|
 \leq C|\sigma'-\sigma|(1+\|x\|^2).
 \label{eq:prox-pgd-global-variation}
\end{equation}
Subtracting a lower bound from $f$, we may assume $f\geq0$.
Set $V_k(x):=1+C_0+\Psi_{\sigma_k}(x)$. From
\eqref{eq:prox-pgd-coercivity},
$V_k(x)\geq1+\gamma f(x)+c_0\|x\|^2$, and hence
$1+\|x\|^2\leq(1+c_0^{-1})V_k(x)$. Therefore,
\[
V_{k+1}(x)-V_k(x)
=\phi_{\sigma_{k+1}}(x)-\phi_{\sigma_k}(x)
\leq C(1+c_0^{-1})|\sigma_{k+1}-\sigma_k|V_k(x).
\]
Thus
\[
 V_{k+1}(x)\leq(1+a_k)V_k(x),
 \qquad a_k=C|\sigma_{k+1}-\sigma_k|,
\]
after enlarging $C$.
The sequence $(a_k)$ is summable by monotonicity of $(\sigma_k)$.
Combining this bound with
\eqref{eq:prox-pgd-current-descent} and iterating yields
$\sup_kV_k(x_k)<\infty$. The coercivity in
\eqref{eq:prox-pgd-coercivity} then gives
$\sup_k\|x_k\|<\infty$.

We can now choose a compact convex set $\cK\subset U$ containing all
iterates. In the gradient-step case, take a sufficiently large closed
ball. For MMSE, the inputs $z_k$ are bounded because the iterates,
stepsizes, and $\nabla f(x_k)$ are bounded. Let $B_R$ be a closed ball
containing them. Joint continuity of $D_\sigma$ makes
$S:=\{D_\sigma(z):z\in B_R,\ \sigma\in I_\eps\}$ a compact subset
of $U$. Since $U$ is convex, the convex hull of $S\cup\{x_0\}$ is a
compact subset of $U$ and contains every iterate; take it as $\cK$.

Joint regularity of $\phi_\sigma$ on $\cK\times I_\eps$ gives finite
constants
\[
 L_{\phi,\cK}:=\sup_{\substack{x\in\cK\\\sigma\in I_\eps}}
       |\partial_\sigma\phi_\sigma(x)|,
 \qquad
 L_{\nabla\phi,\cK}:=
 \sup_{\substack{x\in\cK\\\sigma\in I_\eps}}
       \|\partial_\sigma\nabla\phi_\sigma(x)\|.
\]
Continuity on the compact parameter set also gives the uniform lower bound
\[
\Psi_\star:=
 \inf_{x\in\cK,\,\sigma\in I_\eps}\Psi_\sigma(x)>-\infty.
\]
For every $x\in\cK$, the mean-value theorem thus gives
\begin{equation}
 |\Psi_{\sigma_{k+1}}(x)-\Psi_{\sigma_k}(x)|
 \leq L_{\phi,\cK}|\sigma_{k+1}-\sigma_k|=:\delta_k.
 \label{eq:prox-pgd-schedule-error}
\end{equation}
Since $\sum_k\delta_k<\infty$ by monotonicity of $(\sigma_k)$,
combining this bound at $x_{k+1}$
with \eqref{eq:prox-pgd-current-descent} yields
\begin{equation}
 \Psi_{\sigma_{k+1}}(x_{k+1})
 \leq\Psi_{\sigma_k}(x_k)-m\|x_{k+1}-x_k\|^2+\delta_k.
 \label{eq:prox-pgd-quasi-descent}
\end{equation}
Define $r_k:=\sum_{j=k}^\infty\delta_j$. Then
$\Psi_{\sigma_k}(x_k)+r_k$ is nonincreasing and bounded below, so it
converges. As $r_k\to0$, $\Psi_{\sigma_k}(x_k)$ converges, and summing
\eqref{eq:prox-pgd-quasi-descent} gives
\begin{equation}
 m\sum_{k=0}^K\|x_{k+1}-x_k\|^2
 \leq\Psi_{\sigma_0}(x_0)-\Psi_\star+\sum_{k=0}^K\delta_k.
 \label{eq:prox-pgd-energy-bound}
\end{equation}
Hence $\sum_k\|x_{k+1}-x_k\|^2<\infty$ and
$\|x_{k+1}-x_k\|\to0$. Moreover,
\[
 |\Psi_{\sigma_k}(x_k)-\Psi_\eps(x_k)|
 \leq L_{\phi,\cK}|\sigma_k-\eps|\longrightarrow0.
\]
Thus $F_\eps(x_k)=\Psi_\eps(x_k)/\gamma$ converges.

We next prove asymptotic stationarity, as in the RED--GD proof (Theorem~\ref{thm:gd-pnp}).
First-order optimality of the proximal step gives
\[
 x_{k+1}-x_k+\gamma\nabla f(x_k)
       +\nabla\phi_{\sigma_k}(x_{k+1})=0.
\]
Consequently,
\begin{equation}
 \nabla\Psi_{\sigma_k}(x_{k+1})
 =\gamma\bigl(\nabla f(x_{k+1})-\nabla f(x_k)\bigr)
       -(x_{k+1}-x_k),
 \quad
 \|\nabla\Psi_{\sigma_k}(x_{k+1})\|
 \leq(1+\gamma L_f)\|x_{k+1}-x_k\|.
 \label{eq:prox-pgd-optimality-gradient}
\end{equation}
Moreover, the change of objective from iteration $k-1$ to $k$
satisfies
\[
 \|\nabla\Psi_{\sigma_k}(x_k)
       -\nabla\Psi_{\sigma_{k-1}}(x_k)\|
 \leq L_{\nabla\phi,\cK}|\sigma_k-\sigma_{k-1}|=:b_k.
\]
Since $(\sigma_k)$ has finite total variation, $\sum_k b_k^2<\infty$.
Therefore, for $k\geq1$,
\begin{equation}
 \|\nabla F_{\sigma_k}(x_k)\|
 =\frac{1}{\gamma}\|\nabla\Psi_{\sigma_k}(x_k)\|
 \leq\frac{(1+\gamma L_f)\|x_k-x_{k-1}\|+b_k}{\gamma}.
 \label{eq:prox-pgd-gradient-bound}
\end{equation}
 Together with
\eqref{eq:prox-pgd-energy-bound}, this proves
$\sum_{k\geq1}\|\nabla F_{\sigma_k}(x_k)\|^2<\infty$.
In particular $\|\nabla F_{\sigma_k}(x_k)\|\to0$, and
\begin{equation}
 \min_{1\leq k\leq K}\|\nabla F_{\sigma_k}(x_k)\|^2
 \leq\frac{1}{K}\sum_{k=1}^\infty
              \|\nabla F_{\sigma_k}(x_k)\|^2
 =\mathcal O(K^{-1}).
 \label{eq:prox-pgd-current-rate-proof}
\end{equation}
Finally, on $\cK$, the terminal and moving gradients satisfy
\begin{equation}
 \|\nabla F_{\sigma_k}(x)-\nabla F_\eps(x)\|
 \leq\frac{L_{\nabla\phi,\cK}}{\gamma}
       |\sigma_k-\eps|\longrightarrow0
 \quad\text{uniformly in }x\in\cK.
 \label{eq:prox-pgd-terminal-gradient-transfer}
\end{equation}
Thus $\|\nabla F_\eps(x_k)\|\to0$. If
$x_{k_j}\to\bar x$, continuity of $\nabla F_\eps$ gives
$\nabla F_\eps(\bar x)=0$, so every cluster point is critical.

\paragraph{Point convergence.}
Assume now the additional decay condition
$|\sigma_k-\eps|=\mathcal O(k^{-\alpha})$ with $\alpha>1$. Set
\[
 e_k:=L_{\nabla\phi,\cK}|\sigma_k-\eps|.
\]
Then $\sup_{x\in\cK}\|\nabla(\Psi_\eps-\Psi_{\sigma_k})(x)\|
\leq e_k=\mathcal O(k^{-\alpha})$, and
$\sum_ke_k<\infty$, $\sum_ke_k^2<\infty$.
Since $\cK$ is convex, integration along $[x_k,x_{k+1}]$ and
\eqref{eq:prox-pgd-current-descent} give
\begin{equation}
 \Psi_\eps(x_{k+1})
 \leq\Psi_\eps(x_k)-m\|x_{k+1}-x_k\|^2
       +e_k\|x_{k+1}-x_k\|
 \leq\Psi_\eps(x_k)-\frac m2\|x_{k+1}-x_k\|^2
       +\frac{e_k^2}{2m}.
 \label{eq:prox-pgd-kl-descent}
\end{equation}
As in the RED--GD proof, choose an even integer
$\theta>(2\alpha-1)/(2\alpha-2)$ and define
\[
 t_k:=\left(\frac{\theta}{2m}
             \sum_{j=k}^\infty e_j^2\right)^{1/\theta},
 \qquad
 \Phi(x,t):=\Psi_\eps(x)+\frac{t^\theta}{\theta}.
\]
Then \eqref{eq:prox-pgd-kl-descent} becomes
\[
 \Phi(x_{k+1},t_{k+1})+\frac m2\|x_{k+1}-x_k\|^2
 \leq\Phi(x_k,t_k),
\]
which verifies the sufficient-decrease condition~(H1) of
\citet[Assumption~H]{ochs2019inexact}. By
\eqref{eq:prox-pgd-optimality-gradient},
\[
 \|\nabla\Phi(x_{k+1},t_{k+1})\|
 \leq(1+\gamma L_f)\|x_{k+1}-x_k\|
       +e_k+t_{k+1}^{\theta-1}.
\]
The choice of $\theta$ ensures
$t_k^{\theta-1}
 =\mathcal O(k^{-(2\alpha-1)(\theta-1)/\theta})$ is
summable. Thus the relative-error condition~(H2) holds.
The bounded sequence $(x_k,t_k)$ has a convergent subsequence;
continuity of $\Phi$ on $U\times\R$ makes it
$\Phi$-attentively convergent, as required in~(H3).
The remaining distance and parameter conditions follow by taking
$\|x_{k+1}-x_k\|$ and constant positive coefficients.

Finally, $\Psi_\eps$ is subanalytic on $U$, hence $\Phi$ is KL
there. To use the global formulation of the KL theorem, choose a
compact polytope $P$ such that
$\cK\subset\operatorname{int}P\subset P\subset U$ and extend
$\Phi|_{P\times\R}$ by $+\infty$ outside $P\times\R$.
This extension is proper, lower semicontinuous, subanalytic and
bounded below; it agrees with $\Phi$ near all iterates and cluster
points. Therefore \citet[Theorem~10]{ochs2019inexact} gives
\[
 \sum_k\|x_{k+1}-x_k\|<\infty,
 \qquad x_k\to x_\star,
 \qquad \nabla\Psi_\eps(x_\star)
       =\gamma\nabla F_\eps(x_\star)=0.
\]
This proves finite length and convergence to a critical point of $F_\eps$.
\end{proof}

\subsection{SNOPnP and PnP--Flow}
\label{app:flow-change-variables}

PnP--Flow~\citep{martin2025pnp} interpolates a gradient step with fresh
Gaussian noise before applying the flow denoiser:
\begin{equation}
 x_{k+1}=\widetilde D_{t_k}\!\left(
 t_k\bigl(x_k-\gamma\nabla f(x_k)\bigr)
 +(1-t_k)Z_{k+1}\right),
 \qquad t_k\in(0,1),\quad Z_{k+1}\sim\N(0,I_d).
 \label{eq:PnP--Flow-update}
\end{equation}
Here $\widetilde D_t$ is a denoiser that takes as input the flow-matching time interpolation between image and noise. For $\sigma>0$, set $s(\sigma)=1/(1+\sigma)$ and
$D_\sigma(u)=\widetilde D_{s(\sigma)}(s(\sigma)u)$.
With this change of variable, $D_\sigma$ is a proper image-space Gaussian denoiser that takes the rescaled image-space input $u$.
Writing $\sigma_k=(1-t_k)/t_k$ gives $t_k=s(\sigma_k)$ and
$t_kz+(1-t_k)Z=t_k(z+\sigma_kZ)$. Thus \eqref{eq:PnP--Flow-update}
becomes
\begin{equation*}
 x_{k+1}=D_{\sigma_k}\bigl(
 x_k-\gamma\nabla f(x_k)+\sigma_kZ_{k+1}\bigr).
\end{equation*}
which is exactly the SNOPnP algorithm proposed in \cite{renaud2026equivariant}. 

Assuming
that the rescaled denoiser is proximal, the PnP--Flow / SNOPnP update reads
\[
x_{k+1}
=
\operatorname{prox}_{\phi_{\sigma_k}}
\bigl(x_k-\gamma\nabla f(x_k)+\sigma_k Z_{k+1}\bigr),
\qquad Z_{k+1}\sim\mathcal N(0,I_d).
\]

\paragraph{Decoupling the two noise levels.}
In the above update, the injected
noise has amplitude $\sigma_k$, the same level used by $D_{\sigma_k}$.
In the proximal comparison below, this perturbation produces a term
proportional to $\sigma_k^2\|Z_{k+1}\|^2$. Our descent
argument in the proof applies when the sum of these stochastic terms is finite. If
$\sigma_k\to\eps>0$, this condition fails. 
We thus decouple the stochastic part and the noise level of the denoiser: we use a different schedule $\tau_k$ for each:
\begin{equation}
\label{eq:decoupled}
x_{k+1}
=
\operatorname{prox}_{\phi_{\sigma_k}}
\bigl(x_k-\gamma\nabla f(x_k)+\tau_k Z_{k+1}\bigr),
\qquad Z_{k+1}\sim\mathcal N(0,I_d).
\end{equation}
The condition
$\sum_k\tau_k^2<\infty$ then controls the perturbations while
$\sigma_k\to\eps>0$ retains the terminal objective
$F_\eps=f+\gamma^{-1}\phi_\eps$.

\begin{theorem}[Convergence of decoupled SNOPnP / PnP--Flow]
\label{thm:PnP--Flow}
Suppose Assumptions~\ref{ass:noise-schedule},
\ref{ass:prox-lipschitz-x},~\ref{ass:denoiser_gs}, and
\ref{ass:regular-proximal-denoiser} hold. Use a constant stepsize
$\gamma>0$ with $\gamma L_f<1$, and suppose
$\sum_k\tau_k^2<\infty$. Let
$F_\sigma=f+\gamma^{-1}\phi_\sigma$.
Almost surely, 
$\|\nabla F_\eps(x_k)\|\to0$,
$\|x_{k+1}-x_k\|\to0$, and every cluster point is critical for
$F_\eps$. In particular,
$\dist(x_k,\crit F_\eps)\to0$.
Moreover, almost surely,
\begin{equation}
 \min_{1\leq k\leq K}\|\nabla F_{\sigma_k}(x_k)\|^2
 =\mathcal O\!\left(K^{-1}\right),\qquad K\geq1.
 \label{eq:PnP--Flow-current-rate}
\end{equation}
\end{theorem}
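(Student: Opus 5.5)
The plan is to repeat the PnP--PGD argument of Theorem~\ref{thm:prox-pgd} pathwise, treating the injected noise as a perturbation whose cumulative effect is controlled by $\sum_k\tau_k^2<\infty$. Write $z_k=x_k-\gamma\nabla f(x_k)+\tau_kZ_{k+1}$ and $x_{k+1}=\prox_{\phi_{\sigma_k}}(z_k)$. Comparing the prox objective at $x_{k+1}$ and at $x_k$ gives, instead of \eqref{eq:prox-pgd-proximal-descent},
\[
\phi_{\sigma_k}(x_{k+1})\leq\phi_{\sigma_k}(x_k)-\gamma\langle\nabla f(x_k),x_{k+1}-x_k\rangle+\tau_k\langle Z_{k+1},x_{k+1}-x_k\rangle-\tfrac12\|x_{k+1}-x_k\|^2 .
\]
Adding the descent lemma for $\gamma f$ and applying Young's inequality $\tau_k\langle Z_{k+1},x_{k+1}-x_k\rangle\leq\frac m2\|x_{k+1}-x_k\|^2+\frac{1}{2m}\tau_k^2\|Z_{k+1}\|^2$, I expect
\[
\Psi_{\sigma_k}(x_{k+1})\leq\Psi_{\sigma_k}(x_k)-\tfrac m2\|x_{k+1}-x_k\|^2+\tfrac{1}{2m}\tau_k^2\|Z_{k+1}\|^2,
\]
with $m=(1-\gamma L_f)/2$ and $\Psi_\sigma=\gamma F_\sigma$. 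The key probabilistic input is that $\E\sum_k\tau_k^2\|Z_{k+1}\|^2=d\sum_k\tau_k^2<\infty$. Hence $\sum_k\tau_k^2\|Z_{k+1}\|^2<\infty$ almost surely, and also $\tau_k\|Z_{k+1}\|\to0$ almost surely. I fix a sample path on which both hold. From this point on, no martingale argument is needed: the whole proof becomes deterministic with a summable error sequence $\eta_k:=\tau_k^2\|Z_{k+1}\|^2/(2m)$.

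\textbf{Boundedness.} In the MMSE case, every $x_{k+1}$ lies in the bounded set $U$, since $\operatorname{Im}D_\sigma\subseteq U$ regardless of the noise. In the gradient-step case, I take $V_k=1+C_0+\Psi_{\sigma_k}$ as in the proof of Theorem~\ref{thm:prox-pgd} and use the multiplicative bound $V_{k+1}\leq(1+a_k)V_k$. This gives
\[
V_{k+1}(x_{k+1})\leq(1+a_k)\bigl(V_k(x_k)+\eta_k\bigr),
\]
and since $\prod_k(1+a_k)<\infty$ and $\sum_k\eta_k<\infty$, the sequence $V_k(x_k)$ is bounded on the path, so the iterates are bounded.

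\textbf{Compact set.} I then build a compact convex $\cK\subset U$ containing all iterates. The inputs $z_k$ are bounded on the path because $\tau_k\|Z_{k+1}\|\to0$. This lets me reuse the compact-set construction from the MMSE case and obtain the constants $L_{\phi,\cK}$ and $L_{\nabla\phi,\cK}$.

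\textbf{Descent and stationarity.} The schedule error \eqref{eq:prox-pgd-schedule-error} yields a quasi-descent inequality with error $\delta_k+\eta_k$, which is summable. Therefore $\Psi_{\sigma_k}(x_k)$ converges, $\sum_k\|x_{k+1}-x_k\|^2<\infty$, and $F_\eps(x_k)$ converges. The optimality condition becomes
\[
\nabla\Psi_{\sigma_k}(x_{k+1})=\gamma\bigl(\nabla f(x_{k+1})-\nabla f(x_k)\bigr)-(x_{k+1}-x_k)+\tau_kZ_{k+1},
\]
so
\[
\|\nabla F_{\sigma_k}(x_k)\|\leq\gamma^{-1}\Bigl((1+\gamma L_f)\|x_k-x_{k-1}\|+\tau_{k-1}\|Z_k\|+b_k\Bigr).
\]
Each of the three terms on the right is square summable on the path. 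This gives $\sum_k\|\nabla F_{\sigma_k}(x_k)\|^2<\infty$ almost surely, hence the $\mathcal O(1/K)$ rate \eqref{eq:PnP--Flow-current-rate} with a path-dependent constant. The transfer bound \eqref{eq:prox-pgd-terminal-gradient-transfer} then gives $\|\nabla F_\eps(x_k)\|\to0$, and criticality of cluster points and $\dist(x_k,\crit F_\eps)\to0$ follow by continuity and compactness of $\cK$.

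\textbf{Main obstacle.} The delicate step is the MMSE case, for two reasons. First, $\phi_\sigma$ is only finite on $U$, so the comparison point $x_k$ must lie in $U$; I handle this by discarding the first iterate, as in the proof of Theorem~\ref{thm:prox-pgd}. Second, $\cK$ must stay inside $U$ even though the noisy inputs $z_k$ are random. This is exactly why boundedness of the $z_k$ on the path, which follows from $\tau_k\|Z_{k+1}\|\to0$, has to be established before choosing $\cK$.
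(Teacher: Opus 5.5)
Your proposal is correct and follows the paper's proof essentially step by step. Like the paper, you reduce to a pathwise argument via $\E\sum_k\tau_k^2\|Z_{k+1}\|^2=d\sum_k\tau_k^2<\infty$, absorb the noise cross term by Young's inequality, bound the iterates with the multiplicative $V_k$ estimate (or membership in $U$ in the MMSE case), and build a compact convex $\cK\subset U$ from the bounded proximal inputs. From there you use the summable quasi-descent and the noisy first-order optimality bound to obtain square-summable moving gradients, then transfer to $\nabla F_\eps$.
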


\begin{proof}
The consequences of Assumption~\ref{ass:regular-proximal-denoiser}
follow from Propositions~\ref{prop:gs-proximal-regularity}
and~\ref{prop:various_mmse_results}, as in the proof of
Theorem~\ref{thm:prox-pgd}. In particular,
$D_\sigma=\operatorname{prox}_{\phi_\sigma}$, and
$\phi_\sigma$ is jointly $\mathcal C^2$ on $U\times I_\eps$,
where $U=\R^d$ for a contractive gradient-step denoiser and
$U=\operatorname{int}(\operatorname{conv}(\operatorname{supp}P_X))$
for the exact MMSE denoiser. Set
\[
 \Psi_\sigma:=\gamma F_\sigma=\gamma f+\phi_\sigma,
 \qquad
 e_{k+1}:=\tau_kZ_{k+1}.
\]
Since
\[
 \E\sum_k\|e_{k+1}\|^2=d\sum_k\tau_k^2<\infty,
\]
we have $\sum_k\|e_{k+1}\|^2<\infty$ almost
surely. We work on a realization with this property; all subsequent
bounds are pathwise.

We first establish descent for the moving objective. For a
gradient-step denoiser the following comparison holds from $k=0$;
for MMSE it holds from $k=1$, when $x_k\in U$. By definition of the proximal operator at input
$z_k=x_k-\gamma\nabla f(x_k)+e_{k+1}$, we have
\[
 \phi_{\sigma_k}(x_{k+1})
 \leq\phi_{\sigma_k}(x_k)
 -\gamma\langle\nabla f(x_k),x_{k+1}-x_k\rangle
 -\frac12\|x_{k+1}-x_k\|^2
 +\langle e_{k+1},x_{k+1}-x_k\rangle.
\]
Adding the descent lemma for $f$ yields
\begin{equation}
 \Psi_{\sigma_k}(x_{k+1})
 \leq\Psi_{\sigma_k}(x_k)-m\|x_{k+1}-x_k\|^2
       +\langle e_{k+1},x_{k+1}-x_k\rangle,
 \qquad m:=\frac{1-\gamma L_f}{2}>0.
 \label{eq:flow-noisy-descent}
\end{equation}
Young's inequality then gives
\begin{equation}
 \Psi_{\sigma_k}(x_{k+1})
 \leq\Psi_{\sigma_k}(x_k)-\frac m2\|x_{k+1}-x_k\|^2
       +\frac{1}{2m}\|e_{k+1}\|^2.
 \label{eq:flow-current-descent}
\end{equation}

We now prove boundedness of the iterates. In the MMSE case, every iterate from $x_1$ onward belongs
to the bounded set $D_{\sigma_k}(\R^d)=U$ by
Proposition~\ref{prop:various_mmse_results}. Up to changing the start at $k=1$, we can assume bounded iterates. In the gradient-step
case, the estimates established under the same assumptions in the
proof of Theorem~\ref{thm:prox-pgd} give
\[
 \phi_\sigma(x)\geq c_0\|x\|^2-C_0,
 \qquad
 |\phi_{\sigma'}(x)-\phi_\sigma(x)|
 \leq C|\sigma'-\sigma|(1+\|x\|^2).
\]
After subtracting a lower bound from $f$, assume $f\geq0$.
As in the PnP--PGD proof, set
$V_k(x):=1+C_0+\Psi_{\sigma_k}(x)$, so that
$V_k(x)\geq1+\gamma f(x)+c_0\|x\|^2$ and
\[
 V_{k+1}(x)\leq(1+a_k)V_k(x),
 \qquad
 a_k=C|\sigma_{k+1}-\sigma_k|,
 \qquad \sum_k a_k<\infty.
\]
Combining this bound with \eqref{eq:flow-current-descent} gives
\[
 V_{k+1}(x_{k+1})
 \leq(1+a_k)\left(V_k(x_k)
                    +\frac{\|e_{k+1}\|^2}{2m}\right).
\]
Since $\prod_k(1+a_k)<\infty$ and $\sum_k\|e_{k+1}\|^2<\infty$,
iteration shows that $\sup_kV_k(x_k)<\infty$. Uniform coercivity
then gives $\sup_k\|x_k\|<\infty$.

As in the PnP--PGD proof, choose a compact convex set
$\cK\subset U$ containing $x_k$ for every $k\geq1$. For gradient-step denoisers,
a sufficiently large closed ball suffices. For MMSE, the proximal
inputs $z_k$, the iterates and $\nabla f(x_k)$ are
bounded. Joint continuity of
$D_\sigma$ makes the image of a closed ball containing all $z_k$,
over $\sigma\in I_\eps$, a compact subset $S$ of the open convex set $U$. Thus
the convex hull of $S$ is a compact subset of
$U$ and contains all iterates; we take it as $\cK$.

Joint regularity on $\cK\times I_\eps$ gives finite constants
\[
 L_{\phi,\cK}:=\sup_{\substack{x\in\cK\\\sigma\in I_\eps}}
       |\partial_\sigma\phi_\sigma(x)|,
 \quad
 L_{\nabla\phi,\cK}:=
       \sup_{\substack{x\in\cK\\\sigma\in I_\eps}}
       \|\partial_\sigma\nabla\phi_\sigma(x)\|.
\]
Set $\Psi_\star:=\inf_{x\in\cK,\,\sigma\in I_\eps}
\Psi_\sigma(x)>-\infty$. For $x\in\cK$,
\[
 |\Psi_{\sigma_{k+1}}(x)-\Psi_{\sigma_k}(x)|
 \leq L_{\phi,\cK}|\sigma_{k+1}-\sigma_k|.
\]
For $k\geq1$, define the summable error
\[
 \delta_k:=L_{\phi,\cK}|\sigma_{k+1}-\sigma_k|
 +\frac{\|e_{k+1}\|^2}{2m}.
\]
Combining the above bound at $x_{k+1}$ with
\eqref{eq:flow-current-descent} gives
\begin{equation}
 \Psi_{\sigma_{k+1}}(x_{k+1})
 \leq\Psi_{\sigma_k}(x_k)-\frac m2\|x_{k+1}-x_k\|^2+\delta_k.
 \label{eq:flow-pathwise-descent}
\end{equation}
With
$r_k:=\sum_{j=k}^\infty\delta_j$, the sequence
$\Psi_{\sigma_k}(x_k)+r_k$ is nonincreasing for $k\geq1$ and
bounded below by $\Psi_\star$. Hence $\Psi_{\sigma_k}(x_k)$ converges and
\begin{equation}
 \sum_{k=1}^\infty\|x_{k+1}-x_k\|^2<\infty.
 \label{eq:flow-step-energy}
\end{equation}
In particular, $\|x_{k+1}-x_k\|\to0$. Moreover,
\[
 |\Psi_{\sigma_k}(x_k)-\Psi_\eps(x_k)|
 \leq L_{\phi,\cK}|\sigma_k-\eps|\longrightarrow0,
\]
so $F_\eps(x_k)=\Psi_\eps(x_k)/\gamma$ converges.

It remains to prove stationarity. First-order optimality of the noisy
proximal step gives
\[
 x_{k+1}-x_k+\gamma\nabla f(x_k)
       +\nabla\phi_{\sigma_k}(x_{k+1})-e_{k+1}=0.
\]
Consequently,
\begin{equation}
 \|\nabla\Psi_{\sigma_k}(x_{k+1})\|
 \leq(1+\gamma L_f)\|x_{k+1}-x_k\|
       +\|e_{k+1}\|.
 \label{eq:flow-optimality-gradient}
\end{equation}
At $x_k$ for $k\geq2$, the change of moving objective satisfies
\[
 \|\nabla\Psi_{\sigma_k}(x_k)
       -\nabla\Psi_{\sigma_{k-1}}(x_k)\|
 \leq L_{\nabla\phi,\cK}|\sigma_k-\sigma_{k-1}|=:b_k.
\]
Thus, for $k\geq2$,
\[
 \|\nabla F_{\sigma_k}(x_k)\|
 \leq\frac{(1+\gamma L_f)\|x_k-x_{k-1}\|
              +\|e_k\|+b_k}{\gamma}.
\]
The finite total variation of $(\sigma_k)$ makes
$\sum_k b_k^2<\infty$. Together with
\eqref{eq:flow-step-energy} and $\sum_k\|e_{k+1}\|^2<\infty$,
this proves $\sum_{k\geq1}\|\nabla F_{\sigma_k}(x_k)\|^2<\infty$.
It follows that $\|\nabla F_{\sigma_k}(x_k)\|\to0$ and
\eqref{eq:PnP--Flow-current-rate}.
Moreover, 
\[
 \sup_{x\in\cK}\|\nabla F_{\sigma_k}(x)-\nabla F_\eps(x)\|
 \leq\frac{L_{\nabla\phi,\cK}}{\gamma}
       |\sigma_k-\eps|\longrightarrow0.
\]
Hence $\|\nabla F_\eps(x_k)\|\to0$. Continuity of
$\nabla F_\eps$ makes every cluster point critical, and
compactness of $\cK$ gives $\dist(x_k,\crit F_\eps)\to0$.
\end{proof}

\section{Experimental details}
\label{app:experiments}

\subsection{Denoisers: architectures, training and results}
\label{app:denoiser_training}
\label{app:denoisers}

The four denoisers of Section~\ref{sec:experiments} are gradient-step denoisers $D_\sigma=\id-\nabla g_\sigma$ with $$g_\sigma(x)=\frac12\|x-N_\sigma(x)\|^2$$ There are two training phases that we detail below: a \emph{gradient-step phase}, which trains $D_\sigma$ as a gradient-step denoiser and a \emph{proximal phase} started from its output, which adds the penalty on $\|\nabla^2 g_\sigma\|_2$ to get a proximal denoiser. %

\paragraph{Architectures and initialization.}
On natural images, $N_\sigma$ is the DRUNet of~\citet{zhang2021dpir} in the configuration of GS--DRUNet~\citep{hurault2022gradient}: a U-Net with four scales of $64$, $128$, $256$ and $512$ channels, two residual blocks per scale, the noise level given as a constant input channel, $17.0$M parameters, and softplus activations instead of ELU, so that $N_\sigma$ is $\mathcal C^\infty$ and the Hessian $\nabla^2 g_\sigma$ on which the proximal analysis rests is continuous (ELU is only $\mathcal C^1$). It is initialized with the weights of the published GS--DRUNet, trained for $\sigma\in[0,50/255]$. 

On FFHQ face images, $N_\sigma$ is the U-Net of~\citet{dhariwal2021diffusion} trained by~\citet{choi2021ilvr} as an $\epsilon$-prediction diffusion model on FFHQ at $256\times256$ and used by DiffPIR~\citep{zhu2023denoising}: $128$ base channels, one residual block per level, self-attention at one resolution, $93.6$M parameters. 

We normalize clean images in $[0,1]$ intensity units. For a
noisy input $y=x+\sigma z$, with $z\sim\mathcal N(0,I)$, conversion to
the diffusion model's $[-1,1]$ coordinates gives
$2y-1=(2x-1)+2\sigma z$. To put this input in the model's
variance-preserving form, we set
$\bar\alpha=(1+4\sigma^2)^{-1}$ and the network
receives $\sqrt{\bar\alpha}(2y-1)$ at time $t(\sigma)$, and we use its
noise prediction to define
\begin{equation*}
N_\sigma(y)=y-\sigma\,\epsilon_\theta\bigl(
    \sqrt{\bar\alpha}(2y-1),\,t(\sigma)\bigr).
\end{equation*}
We obtain $t(\sigma)$ from the continuous variance-preserving schedule,
rather than using the discrete timestep lookup of the original DDPM
implementation, to preserve regularity in $\sigma$.

\paragraph{Training data.}
Our DRUNet models are trained on the original training set of the published DRUNet from~\cite{zhang2021dpir} (BSD400, the Waterloo Exploration Database, DIV2K and Flickr2K, $8\,666$ images), on random $128\times128$ crops taken at the native scale of the images with random horizontal and vertical flips; the last $16$ images are held out for the validation of the training. DiffUNet models are trained on FFHQ at $128\times128$ with random horizontal flips.

\paragraph{Gradient-step training phase.}
The network minimizes the weighted $L^2$ denoising loss
\begin{equation}
\label{eq:gs-loss}
\mathcal L_{\rm GS}(\theta)=\E_{x,\sigma,z}\Big[w(\sigma)\,\big\|D_\sigma(x+\sigma z)-x\big\|^2\Big],\qquad D_\sigma=\id-\nabla g_\sigma,\quad w(\sigma)=\frac{\sigma^2+\sigma_{\rm d}^2}{\sigma^2\sigma_{\rm d}^2},
\end{equation}
where $\|\cdot\|^2$ is the mean squared error over the pixels, $\theta$ are the parameters of $N_\sigma$ and $w$ is the weighting proposed by~\citet{karras2022elucidating} with $\sigma_{\rm d}=0.25$ (the standard deviation of the data for images in $[0,1]$), which equalizes the contribution of the noise levels: the loss is of order $1$ at every $\sigma$.   On natural images $\sigma$ is log-uniform on $[0.002,2]$, so that every decade of noise receives the same number of samples; on faces it follows the log-normal distribution of EDM, $\ln(2\sigma)\sim\mathcal N(-1.2,1.2^2)$, clipped to $[0.002,5]$.

The gradient $\nabla g_\sigma(y)$ is computed exactly by automatic differentiation, so a training step differentiates twice through the network (double backpropagation), as in~\citet{hurault2022gradient}. Both phases use AdamW at learning rate $10^{-4}$ after a linear warm-up of $1000$ steps. GS--DRUNet is trained for $10$k steps at global batch $64$.  GS--DiffUNet is trained for $4$k steps at global batch $64$ (about four epochs of FFHQ). 

Table~\ref{tab:spectrum} reports the PSNR performance of the original and finetuned models at different noise levels. It shows that the conservative gradient-step parametrization does not reduce denoising performance: initialized from the pretrained DiffUNet, GS--DiffUNet matches or exceeds its PSNR on FFHQ after only $4$k training steps. Likewise, starting from the published GS--DRUNet trained only for $\sigma\leq50/255$, fine-tuning successively extends its performance to larger noise levels.

\paragraph{Proximal phase.}
Following~\citet{hurault2022proximal}, the training of the proximal denoisers is done by starting from the gradient-step checkpoints, with the additional term in the loss:
\begin{equation}
\label{eq:prox-loss}
\mathcal L_{\rm Prox}(\theta)=\mathcal L_{\rm GS}(\theta)+\mu\,\E\Big[\sum_{i=1}^{n_p}\omega_i\,\max\big(0, \|\nabla^2 g_{\sigma_i}(y_i)\|_2-(1-\delta)\big)^2\Big],
\end{equation}
where $0<\delta<1$ is the margin, $\mu$ the weight of the penalty and the weights $\omega_i$ chosen as a softmax over the spectral norms of the current batch: we find it advantageous to penalize stronger the higher spectral norms instead of the mean like~\citet{hurault2022proximal}. Due to time and memory constraint, the penalty is evaluated on the first $8$ images of each batch.

The spectral norm is estimated by the Lanczos method~\citep{golub2013matrix} with $m=20$ iterations. The Lanczos iteration is warm-started from the Ritz vector of the previous step (with $5\%$ of Gaussian jitter), and stopped early when the estimate moves by less than $10^{-3}$. We find Lanczos iterations to converge faster than the power iteration of~\citet{hurault2022proximal}.  We set $\mu=1$, $\delta=0.2$, Prox--DRUNet runs $4$k steps and Prox--DiffUNet runs $10$k steps. 

Table~\ref{tab:spectrum} reports the maximum spectral norm of the Hessian of $g_\sigma$ and the PSNR denoising performance of the original and finetuned models, at different noise levels. On both datasets, proximal training sharply reduces the maximum Hessian spectral norm, bringing it below $1$ for small noise levels, while keeping it close to $1$ at larger noise levels. Handling the constraint on small noise levels is more important because the annealed schedules approach them in the late iterations, where the denoiser governs the limiting behavior of the algorithm.

\begin{table}[h]
\centering\small
\setlength{\tabcolsep}{3pt}
\caption{Denoising PSNR and spectral norm $\| \nabla^2 g_\sigma \|$ on $16$ noisy validation images. GS and Prox denote the gradient-step and proximal denoisers. Bold norms are below $1$.}
\label{tab:spectrum}
\begin{minipage}[t]{\linewidth}
\centering
\textbf{FFHQ (DiffUNet)}\par\smallskip
\begin{tabular}{@{}lcccccc@{}}
\toprule
$\sigma$ & 0.01 & 0.05 & 0.2 & 0.5 & 1 & 5\\
\midrule
\multicolumn{7}{@{}l}{\emph{PSNR (dB)}}\\
Pretrained DiffUNet~\citep{choi2021ilvr} & 44.61 & 35.54 & 28.60 & 24.45 & 21.49 & 15.48\\
GS--DiffUNet (fine-tuned to $\sigma\leq5$) & 44.54 & 35.69 & 28.87 & 24.87 & 22.08 & 15.75\\
Prox--DiffUNet & 44.42 & 35.33 & 28.28 & 24.36 & 21.72 & 16.12\\
\midrule
\multicolumn{7}{@{}l}{\emph{$\max_x\|\nabla^2 g_\sigma(x)\|_2$}}\\
GS--DiffUNet & 1.090 & 2.117 & 4.155 & 3.062 & 3.096 & 1.207\\
Prox--DiffUNet & \textbf{0.902} & \textbf{0.969} & \textbf{0.982} & 1.008 & 1.025 & 1.035\\
\bottomrule
\end{tabular}
\end{minipage}\par\medskip
\begin{minipage}[t]{\linewidth}
\centering
\textbf{CBSD68 (DRUNet)}\par\smallskip
\begin{tabular}{@{}lcccccc@{}}
\toprule
$\sigma$ & 0.01 & 0.05 & 0.2 & 0.5 & 1 & 2\\
\midrule
\multicolumn{7}{@{}l}{\emph{PSNR (dB)}}\\
Published GS--DRUNet~\citep{hurault2022gradient} & 43.69 & 34.03 & 27.07 & 20.82 & 10.46 & $-0.66$\\
GS--DRUNet (fine-tuned to $\sigma\leq2$) & 43.88 & 33.97 & 27.00 & 23.24 & 20.76 & 18.61\\
Prox--DRUNet & 43.66 & 33.48 & 26.24 & 22.46 & 20.08 & 17.84\\
\midrule
\multicolumn{7}{@{}l}{\emph{$\max_x\|\nabla^2 g_\sigma(x)\|_2$}}\\
GS--DRUNet & 1.202 & 1.287 & 1.628 & 1.524 & 1.800 & 1.574\\
Prox--DRUNet & \textbf{0.882} & \textbf{0.919} & \textbf{0.951} & \textbf{0.977} & \textbf{0.989} & 1.040\\
\bottomrule
\end{tabular}
\end{minipage}

\end{table}

\paragraph{Relaxation of the proximal denoisers.}
The spectral penalty in \eqref{eq:prox-loss} encourages
$\|\nabla^2 g_\sigma\|_2<1$ on training samples but does not certify this
bound globally. The validation measurements in Table~\ref{tab:spectrum}
also slightly exceed $1$ at some noise levels. Following
\citet[Section~2.3]{hurault2023relaxed}, we therefore use at inference
the relaxed denoiser
\[
D^\rho_\sigma=(1-\rho)\Id+\rho D_\sigma
             =\Id-\rho\nabla g_\sigma
\]
If $\sup_x\|\nabla^2g_\sigma(x)\|_2<1/\rho$ uniformly over the noise
levels used, then $D^\rho_\sigma$ is a proximal map. In practice, we use $\rho = 0.7$ for all our PnP experiments. This raises
the sufficient threshold on $\|\nabla^2 g_\sigma\|_2$ from $1$ to
$1/\rho\simeq1.43$, above every Prox denoisers validation norm reported in
Table~\ref{tab:spectrum}.  This relaxation thus provides an empirical safety margin.
However, this is only a local, empirical check: we do not guarantee the
constraint for all inputs, but expect it to hold at least in the late PnP
iterations, when the denoiser input images should be close to the ones constrained by the proximal loss.

\subsection{Algorithms, schedules and energy curves}
\label{app:experiments_algos}

\paragraph{Noise schedule.}
Every algorithm runs for $K=500$ iterations with $\sigma_k=\eps+(\sigma_0-\eps)r^k$, except on sparse-view tomography where the runs need $K=1000$ to reach their floor. We set $\sigma_0=5$ on faces, $\sigma_0=1$ on natural images, and $r=0.98$ on both; these choices are shared by all algorithms.

\paragraph{Stepsizes.}
The RED algorithms use $\gamma_0=\gamma_{\rm rel}/(L_f+\lambda L_g)$ with $\gamma_{\rm rel}<2$, where $L_g$ bounds the Lipschitz constant of $\id-D_\sigma=\nabla g_\sigma$ over $\sigma\in[\eps,\sigma_0]$. Following the estimated Lipschitz constant given in Table~\ref{tab:spectrum}, we use $L_g=2.2$ for GS--DRUNet and $L_g=4.2$ for GS--DiffUNet. PnP algorithms use $\gamma=c/L_f$ with $c<1$. For SNORE and ERED, the stepsize is held constant at $\gamma_0$ until iteration step $k_w$ and then follows the decrease
\[
 \gamma_k=\gamma_0\bigl(1+(k-k_w)_+/k_0\bigr)^{-q},
 \qquad k_0=K/4,\quad 1/2<q\leq1.
\]
$k_w$ is set either at $0$ or at the first index at which $\sigma_k<2\eps$, depending on the problem. Thus $\sum_k\gamma_k=\infty$ and $\sum_k\gamma_k^2<\infty$. Delaying the decay can prevent the step from becoming small before a high initial noise level reaches its floor. This delayed decay is selected for SNORE on face demosaicing and super-resolution and for ERED on face super-resolution; everywhere else, the decay from $k=0$ is kept. Table~\ref{tab:hparams} shows the selected $q$ parameter for each concerned problem. For decoupled SNOPnP (Theorem~\ref{thm:PnP--Flow}), the injected noise follows
\[
 \tau_k=(\sigma_k-\eps)+\eps\bigl(1+(k-k_w)_+/k_0\bigr)^{-q},
 \qquad q>1/2.
\]
It initially tracks the denoiser level, then vanishes independently; $\sum_k\tau_k^2<\infty$ because $\sigma_k-\eps$ decays geometrically. The same $k_w$ convention is used in its search grid.

\paragraph{Plotted energies.} In the different figures, we plot in the middle panel the evolution of the terminal objective along iterates $F_\eps(x_k)$.
For SNORE, we estimate the potential $\tilde g_\sigma(x)=\E_Z[g_\sigma(x+\sigma Z)]$ using an average with four fresh samples at each iteration; these samples are not reused across iterations. For PnP, we need to invert the terminal denoiser at each iterate
to evaluate $\phi_\epsilon(x_k)$ using equation~\eqref{eq:pot_phi}.  We use damped fixed-point iteration warm-started at the denoiser input $u_{k-1}$ of the algorithm's own step.

\subsection{Further results on natural images}

\paragraph{Natural images.}

Table~\ref{tab:natural} compares constant and annealed noise levels on CBSD68 for inpainting, tomography, and $\times4$ super-resolution. RED methods use GS--DRUNet and PnP methods use Prox--DRUNet, both finetuned to $\sigma\leq2$. For tomography, we use a parallel-beam Radon transform with $60$ angles over $180^\circ$, $\sigma_n=0.01$, and filtered back-projection initialization. 
Annealing improves PSNR for every algorithm on all three problems. Gains are smaller for random-mask inpainting than for the large face hole: the measurements already constrain much of the coarse image structure that a large initial $\sigma$ helps recover. Tomography is more ambiguous because sparse angular sampling leaves global streaks, thus giving high gains to annealing. Figures~\ref{fig:tomo} and~\ref{fig:inpaint} show representative reconstructions and convergence curves for tomography and inpainting. Across these examples, the terminal objectives level off and the deterministic residuals decay, as predicted by the convergence analysis.

\begin{table}[h]
\centering\small
\setlength{\tabcolsep}{4.5pt}
\caption{PSNR (dB) on the $68$ CBSD68 test images ($128\times128$) for inpainting, tomography with $60$ views, and super-resolution, same layout as Table~\ref{tab:main}: constant noise level $\sigma_c$ versus annealed schedule $\sigma_k\to\eps$, both tuned, and DPIR. RED algorithms use GS--DRUNet, PnP algorithms Prox--DRUNet, both finetuned to $\sigma\le2$. Best annealed value per problem in bold.}
\label{tab:natural}

\begin{tabular}{lcccccc}
\toprule

\multicolumn{7}{l}{\textbf{CBSD68 (68 images), denoiser GS-DRUNet (RED algorithms) / Prox-DRUNet (PnP algorithms)}}\\
 & \multicolumn{2}{c}{Inpainting} & \multicolumn{2}{c}{Tomography (60 views)} & \multicolumn{2}{c}{SR $\times 4$}\\
 & constant $\sigma$ & annealed & constant $\sigma$ & annealed & constant $\sigma$ & annealed\\ \midrule
RED--GD & 29.71 & 30.99 & 27.93 & \textbf{29.02} & 24.64 & 24.70\\
PnP--PGD & 30.30 & 30.70 & 23.58 & 27.42 & 24.68 & 24.70\\
SNORE & 29.93 & 30.93 & 27.89 & 28.98 & 24.64 & \textbf{24.73}\\
SNOPnP & 30.26 & 30.64 & 22.40 & 27.33 & 24.65 & 24.69\\
ERED & 29.40 & \textbf{31.07} & 27.88 & 28.96 & 24.62 & 24.70\\
DPIR (DRUNet) & \multicolumn{2}{c}{29.15} & \multicolumn{2}{c}{28.35} & \multicolumn{2}{c}{24.35}\\
DPIR (GS-DRUNet) & \multicolumn{2}{c}{29.19} & \multicolumn{2}{c}{28.79} & \multicolumn{2}{c}{24.38}\\
\bottomrule
\end{tabular}

\end{table}

\begin{figure}[h]
\centering
\includegraphics[width=\linewidth]{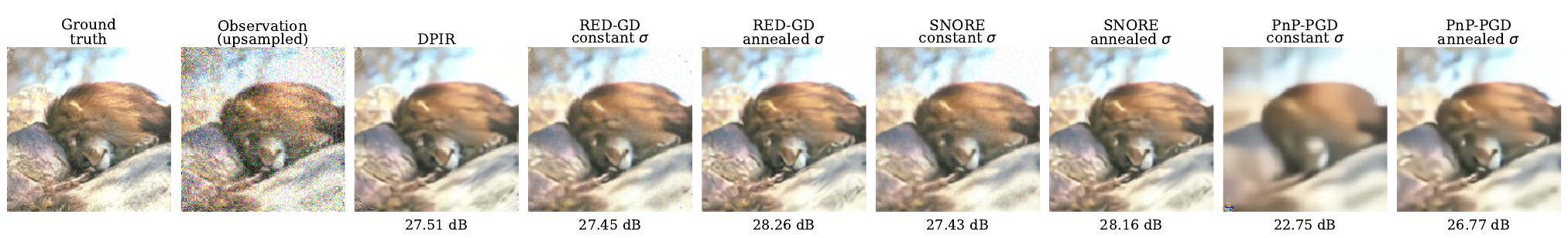}\\[2pt]
\includegraphics[width=\linewidth]{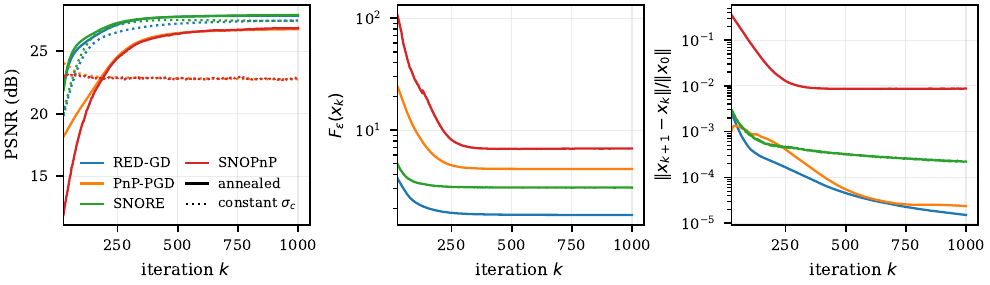}
\caption{Tomography with $60$ views on CBSD68, same layout as Figure~\ref{fig:hole}; the observation is shown through its filtered back-projection.}
\label{fig:tomo}
\end{figure}

\begin{figure}[h]
\centering
\includegraphics[width=\linewidth]{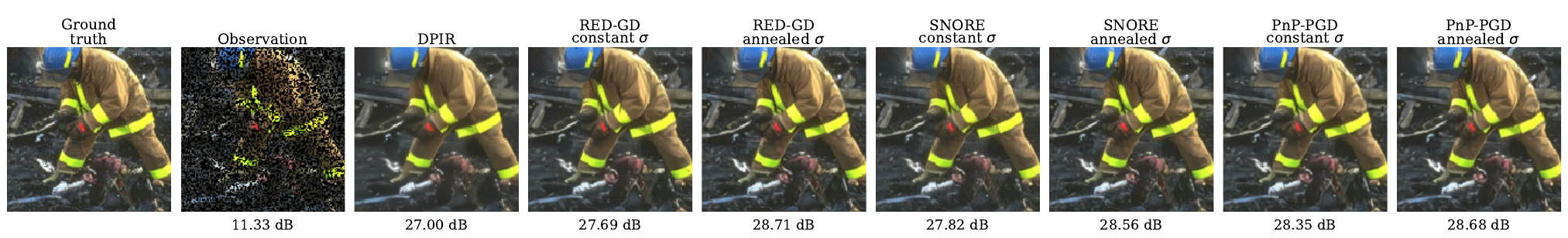}\\[2pt]
\includegraphics[width=\linewidth]{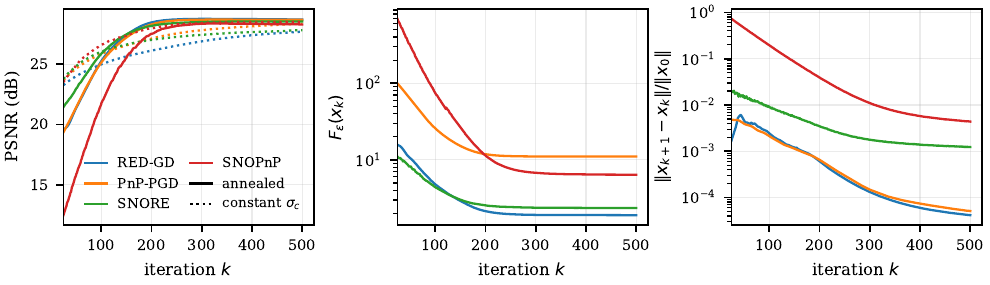}
\caption{Inpainting of $50\%$ random missing pixels on CBSD68, same layout as Figure~\ref{fig:hole}.}
\label{fig:inpaint}
\vspace{-0.5cm}
\end{figure}

\begin{table}[h]
\centering\scriptsize
\setlength{\tabcolsep}{3pt}
\caption{Selected hyper-parameters of the different algorithms for each problem and dataset (validation on $16$ images)}
\label{tab:hparams}
{\footnotesize
\begin{tabular}{llccccc}
\toprule
dataset & problem & algorithm & $\varepsilon$ ($\sigma_c$) & $\lambda$ & step & $q$\\
\midrule
FFHQ & Inpainting & RED--GD & 0.014 (2) & 0.3 & $\gamma_{\rm rel}=1$ & --\\
FFHQ & Inpainting & PnP--PGD & 0.01 (0.04) & -- & $c=0.3$ & --\\
FFHQ & Inpainting & SNORE & 0.01 (2) & 0.3 & $\gamma_{\rm rel}=1$ & 0.55\\
FFHQ & Inpainting & SNOPnP & 0.01 (0.16) & -- & $c=0.95$ & 1\\
FFHQ & Inpainting & ERED & 0.01 (2) & 0.5 & $\gamma_{\rm rel}=1$ & 1\\
\midrule
FFHQ & Demosaicing & RED--GD & 0.01 (0.08) & 0.1 & $\gamma_{\rm rel}=1$ & --\\
FFHQ & Demosaicing & PnP--PGD & 0.01 (0.04) & -- & $c=0.5$ & --\\
FFHQ & Demosaicing & SNORE & 0.01 (0.04) & 0.3 & $\gamma_{\rm rel}=1$ & 0.55\\
FFHQ & Demosaicing & SNOPnP & 0.01 (0.028) & -- & $c=0.5$ & 1\\
FFHQ & Demosaicing & ERED & 0.01 (0.08) & 1 & $\gamma_{\rm rel}=1$ & 1\\
\midrule
FFHQ & SR $\times 4$ & RED--GD & 0.08 (0.2) & 0.01 & $\gamma_{\rm rel}=1$ & --\\
FFHQ & SR $\times 4$ & PnP--PGD & 0.04 (0.057) & -- & $c=0.3$ & --\\
FFHQ & SR $\times 4$ & SNORE & 0.056 (0.11) & 0.01 & $\gamma_{\rm rel}=1$ & 1\\
FFHQ & SR $\times 4$ & SNOPnP & 0.08 (0.08) & -- & $c=0.95$ & 0.7\\
FFHQ & SR $\times 4$ & ERED & 0.08 (0.2) & 0.01 & $\gamma_{\rm rel}=1$ & 1\\
\midrule
CBSD68 & Inpainting & RED--GD & 0.01 (0.03) & 0.3 & $\gamma_{\rm rel}=1$ & --\\
CBSD68 & Inpainting & PnP--PGD & 0.02 (0.02) & -- & $c=0.95$ & --\\
CBSD68 & Inpainting & SNORE & 0.01 (0.06) & 0.3 & $\gamma_{\rm rel}=1$ & 0.55\\
CBSD68 & Inpainting & SNOPnP & 0.01 (0.03) & -- & $c=0.7$ & 1\\
CBSD68 & Inpainting & ERED & 0.01 (0.03) & 0.3 & $\gamma_{\rm rel}=1$ & 1\\
\midrule
CBSD68 & Tomography & RED--GD & 0.028 (0.057) & 0.03 & $\gamma_{\rm rel}=1$ & --\\
CBSD68 & Tomography & PnP--PGD & 0.01 (0.04) & -- & $c=0.95$ & --\\
CBSD68 & Tomography & SNORE & 0.04 (0.08) & 0.03 & $\gamma_{\rm rel}=1$ & 0.55\\
CBSD68 & Tomography & SNOPnP & 0.01 (0.057) & -- & $c=0.7$ & 0.7\\
CBSD68 & Tomography & ERED & 0.028 (0.057) & 0.03 & $\gamma_{\rm rel}=1$ & 0.55\\
\midrule
CBSD68 & SR $\times 4$ & RED--GD & 0.02 (0.04) & 0.1 & $\gamma_{\rm rel}=1$ & --\\
CBSD68 & SR $\times 4$ & PnP--PGD & 0.028 (0.04) & -- & $c=0.7$ & --\\
CBSD68 & SR $\times 4$ & SNORE & 0.04 (0.056) & 0.03 & $\gamma_{\rm rel}=1$ & 1\\
CBSD68 & SR $\times 4$ & SNOPnP & 0.02 (0.04) & -- & $c=0.5$ & 1\\
CBSD68 & SR $\times 4$ & ERED & 0.02 (0.04) & 0.1 & $\gamma_{\rm rel}=1$ & 0.55\\
\bottomrule
\end{tabular}}

\vspace{-0.5cm}
\end{table}

\end{document}